\theoremstyle{plain}
\newtheorem{thm}{Theorem}[section]
\newtheorem{cor}[thm]{Corollary}
\newtheorem{lem}[thm]{Lemma}
\newtheorem{prop}[thm]{Proposition}
\theoremstyle{definition}
\theoremstyle{remark}
\setlist[enumerate,1]{leftmargin=2.2em}
\def\A{\mathcal A}
\def\C{\mathbb C}
\def\R{\mathbb R}
\def\N{\mathbb N}
\def\F{\mathbb F}
\def\Z{\mathbb Z}
\def\sl_2{\mathfrak{sl}_2}
\def\U{U_q(\mathfrak{sl}_2)}
\def\e{\varepsilon}
\title[An embedding of the universal Askey-Wilson algebra into $\U\otimes \U\otimes \U$]
{An embedding of the universal Askey-Wilson algebra\\ into $\U\otimes \U\otimes \U$}
\author{Hau-Wen Huang}
\address{
Department of Mathematics\\
National Central University\\
Chung-Li 32001 Taiwan
}
\email{hauwenh@math.ncu.edu.tw}
\thanks{The research is supported by the Ministry of Science and Technology of Taiwan under the project MOST 105-2115-M-008-013.
}
\begin{document}
\begin{abstract}
The Askey--Wilson algebras were used to interpret the algebraic structure hidden in the Racah--Wigner coefficients of the quantum algebra $\U$. In this paper, we display an injection of a universal analog $\triangle_q$ of Askey--Wilson algebras into $\U\otimes \U\otimes \U$ behind the application.
Moreover we formulate the decomposition rules for $3$-fold tensor products of irreducible Verma $\U$-modules and of finite-dimensional irreducible $\U$-modules into the direct sums of finite-dimensional irreducible $\triangle_q$-modules. 
\end{abstract}

\maketitle

{\footnotesize{\bf Keywords:} Askey--Wilson algebras,
Leonard pairs, Racah--Wigner coefficients.}

\section{Introduction}\label{s:intro}

Fix a nonzero complex scalar $q$ with $q^2\not=1$.
The quantum algebra $\U$ is a unital associative algebra over the complex number field $\C$ generated by $E$, $K^{\pm 1}$, $F$ subject to
\begin{gather*}
K K^{-1}=K^{-1}K=1,\\
KE=q^2EK,
\qquad
KF=q^{-2}FK, \\
EF-FE=\frac{K-K^{-1}}{q-q^{-1}}.
\end{gather*}
The Casimir element of $\U$ has the expression
\begin{gather*}
\Lambda=(q-q^{-1})^2 FE+qK+q^{-1} K^{-1}
\end{gather*}
up to scalar multiplication. Let $\Delta:\U\to \U\otimes \U$ denote the comultiplication of $\U$. The $n$-fold comultiplication $\Delta_n:\U\to \U^{\otimes (n+1)}$ is recurrently defined by $\Delta_0=1$ and 
\begin{gather*}
\Delta_n=(1^{\otimes (n-1)}\otimes \Delta)\circ \Delta_{n-1}
\qquad 
\hbox{for all integers $n\geq 1$}.
\end{gather*}

Consider a Hopf $*$-algebra of $\U$, for instance $U_q(\mathfrak{su}_2)$ with $q$ a real number. Let $V$ denote a $3$-fold tensor product of irreducible unitary $U_q(\mathfrak{su}_2)$-modules. The inner products between two coupled bases of $V$ are called the {\it Racah--Wigner coefficients} of $\U$.  
In fact, the two coupled bases of $V$ are the orthonormal eigenbases of 
\begin{gather*}
K_0=\Delta(\Lambda)\otimes 1,
\qquad
K_1=1\otimes \Delta(\Lambda)
\end{gather*}
on $V$. Granovski{\u\i} and Zhedanov \cite{gz93} realized that when restricting to an eigenspace of $\Delta_2(\Lambda)$ on $V$, there exist complex scalars $\varrho$, $\varrho^*$, $\eta$, $\eta^*$, $\omega$ such that the operators $K_0$, $K_1$ and their $q$-commutator
\begin{gather}\label{e:AW1}
K_2=qK_0K_1-q^{-1}K_1K_0
\end{gather}
satisfy the relations 
\begin{eqnarray}
qK_1K_2-q^{-1}K_2K_1
&=&
\omega K_1+\varrho K_0+\eta^*,
\label{e:AW2}\\
qK_2K_0-q^{-1}K_0K_2
&=&
\omega K_0+\varrho^*K_1+\eta.
\label{e:AW3}
\end{eqnarray}

The unital associative algebra over $\C$ generated by three generators, abusively denoted by $K_0$, $K_1$, $K_2$, subject to the relations of the forms (\ref{e:AW1})--(\ref{e:AW3}) is called the {\it Askey--Wilson algebra} 
\cite{hidden_sym}. 
A universal analog $\triangle_q$ of Askey--Wilson algebras was recently proposed by Terwilliger \cite{uaw2011} with $q^4\not=1$. The defining generators of $\triangle_q$ are usually denoted by $A$, $B$, $C$ and the relations assert that each of
\begin{gather*}
A+
\frac{qBC-q^{-1}CB}{q^2-q^{-2}},
\qquad
B+
\frac{qCA-q^{-1}AC}{q^2-q^{-2}},
\qquad
C+
\frac{qAB-q^{-1}BA}{q^2-q^{-2}}
\end{gather*}
is central in $\triangle_q$. Let $\alpha$, $\beta$, $\gamma$ denote the central elements of $\triangle_q$ given by
\begin{eqnarray*}
\frac{\alpha}{q+q^{-1}}
&=&
A+\frac{qBC-q^{-1}CB}{q^2-q^{-2}},
\\
\frac{\beta}{q+q^{-1}}
&=&
B+
\frac{qCA-q^{-1}AC}{q^2-q^{-2}},
\\
\frac{\gamma}{q+q^{-1}}
&=&
C+
\frac{qAB-q^{-1}BA}{q^2-q^{-2}}.
\end{eqnarray*}
The Casimir element of $\triangle_q$ has the expression
\begin{gather*}
\Omega=q ABC+q^2 A^2+q^{-2}B^2+q^2C^2-qA\alpha-q^{-1}B\beta-qC\gamma.
\end{gather*}

Inspired by the work \cite{gz93}, we discover a homomorphism $\flat:\triangle_q\to \U\otimes \U\otimes \U$ that sends
\begin{eqnarray*}
A
&\mapsto&
\Delta(\Lambda)\otimes 1,
\\
B
&\mapsto&
1\otimes \Delta(\Lambda),
\\
\alpha
&\mapsto&
\Lambda\otimes \Lambda\otimes 1
+(1\otimes 1\otimes \Lambda)\cdot\Delta_2(\Lambda),
\\
\beta
&\mapsto&
1\otimes \Lambda\otimes \Lambda
+(\Lambda\otimes 1\otimes 1)\cdot\Delta_2(\Lambda),
\\
\gamma
&\mapsto&
\Lambda\otimes 1\otimes \Lambda
+(1\otimes \Lambda\otimes 1)
\cdot
\Delta_2(\Lambda),
\end{eqnarray*}
and 
\begin{eqnarray*}
C
&\mapsto &
\frac{\Lambda\otimes 1\otimes \Lambda
+(1\otimes \Lambda\otimes 1)\cdot \Delta_2(\Lambda)}
{q+q^{-1}}
\\
&&\;+\;
\frac{
q^{-1}(1\otimes \Delta(\Lambda))\cdot (\Delta(\Lambda)\otimes 1)
-q(\Delta(\Lambda)\otimes 1)\cdot (1\otimes \Delta(\Lambda))}
{q^2-q^{-2}}.
\end{eqnarray*}
The image of $\Omega$ under $\flat$ is equal to
\begin{gather*}
(q+q^{-1})^2
-\Lambda^2\otimes 1\otimes 1
-1\otimes \Lambda^2\otimes 1
-1\otimes 1\otimes \Lambda^2
-(\Lambda\otimes \Lambda\otimes \Lambda)\cdot \Delta_2(\Lambda)
-\Delta_2(\Lambda)^2.
\end{gather*}
Moreover the homomorphism $\flat$ is shown to be injective. As a consequence, given any Hopf $*$-algebra of $\U$ we show that there exists a unique algebra involution $\dag$ of $\triangle_q$ such that the following diagram commutes:
\begin{table}[H]
\centering
\begin{tikzpicture}
\matrix(m)[matrix of math nodes,
row sep=2.6em, column sep=2.8em,
text height=1.5ex, text depth=0.25ex]
{
\triangle_q
&\U\otimes \U\otimes \U\\
\triangle_q
&\U\otimes \U\otimes \U\\
};
\path[->,font=\scriptsize,>=angle 90]
(m-1-1) edge node[auto] {$\flat$} (m-1-2)
(m-1-1) edge node[left] {$\dag$} (m-2-1)
(m-2-1) edge node[auto] {$\flat$} (m-2-2)
(m-1-2) edge node[auto] {$*\otimes *\otimes *$} (m-2-2);
\end{tikzpicture}
\end{table}
\noindent See \S\ref{s:embedd} for details.

Each $\U\otimes \U\otimes \U$-module can be considered as a $\triangle_q$-module via pulling back the injection $\flat$. The aim of \S\ref{s:RW} is to study the $3$-fold tensor products of irreducible Verma $\U$-modules and of finite-dimensional irreducible $\U$-modules from the view of $\triangle_q$-modules. The $\triangle_q$-modules are completely reducible. Furthermore the work  \cite{Huang:2015} allows us to give the decomposition rules for these $\triangle_q$-modules into the direct sums of finite-dimensional irreducible $\triangle_q$-modules. As a consequence, on each irreducible component $A$ and $B$ form a Leonard pair \cite{lp2001,lp&awrelation}. The preliminaries are contained in  \S\ref{s:U} and \S\ref{s:AW}.

It is worthwhile to remark that the paper \cite{Huang:CG} produces a similar work on an algebraic structure behind the Clebsch--Gordan coefficients of $\U$.

\section{The quantum algebra $\U$ and its modules}\label{s:U}

The conventions for this paper are as follows. An algebra is meant to be a unital associative algebra. Let $\Z$ denote the ring of integers. Let $\N$ denote the set of nonnegative integers and $\N^*=\N\setminus\{0\}$.

Let $\F$ denote a field and fix a nonzero scalar $q\in \F$ with $q^4\not=1$. Recall that the $q$-brackets are defined as
\begin{gather*}
[n]=\frac{q^n-q^{-n}}{q-q^{-1}}
\qquad
\hbox{for all $n\in \N$}
\end{gather*}
and the $q$-binomial coefficients are defined as
\begin{gather*}
{n\brack i}
=\prod_{h=1}^i\frac{[n-h+1]}{[h]}
\qquad
\hbox{for all $i,n\in \N$}.
\end{gather*}

The unadorned tensor products $\otimes$ are taken over $\F$.
Given an $\F$-algebra $\A$ the $n$-fold tensor product $A^{\otimes n}$ of $\A$ is recurrently defined by
\begin{gather*}
\A^{\otimes 0}=\F,
\qquad
\A^{\otimes n}=
\A^{\otimes (n-1)}\otimes \A
\qquad
\hbox{for all $n\in \N^*$}.
\end{gather*}
Here $\A^{\otimes 1}$ is identified with $\A$ via the canonical map.
Given an $\F$-algebra homomorphism $\phi$ the $n$-fold tensor product $\phi^{\otimes n}$ of $\phi$ is defined in a similar way: Set $\phi^{\otimes 0}$ to be the identity map on $\F$ and $\phi^{\otimes n}=
\phi^{\otimes (n-1)}\otimes \phi$ for all $n\in \N^*$.

\subsection{The quantum algebra $\U$}
The quantum algebra $\U$ is an $\F$-algebra generated by $E$, $K^{\pm 1}$, $F$ subject to the relations
\begin{gather*}
K K^{-1}=K^{-1}K=1,\\
KE=q^2EK,
\qquad
KF=q^{-2}FK, \\
EF-FE=\frac{K-K^{-1}}{q-q^{-1}}.
\end{gather*}
Recall from \cite[Theorem 1.5]{jantzen} that

\begin{lem}\label{lem:PBW}
The elements
\begin{gather*}
F^i K^h E^j
\qquad
\hbox{
for all
$h\in \Z$
 and
$i, j\in \N$
}
\end{gather*}
are an $\F$-basis of $\U$.
\end{lem}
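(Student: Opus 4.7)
The statement is the standard PBW-type theorem for $U_q(\mathfrak{sl}_2)$. Since the author merely cites \cite[Theorem 1.5]{jantzen}, a self-contained proof would follow the classical two-step outline: (1) show that the given monomials span $\U$; (2) show they are linearly independent.

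For the spanning claim, the plan is to introduce a length function on words in the generators $E, K^{\pm 1}, F$ and then check that each of the three defining relations can be used as a rewriting rule that strictly decreases some well-chosen ordering. Concretely, one would prove by induction on total degree that every word can be brought into the normal form $F^i K^h E^j$ by repeated application of
\[
EK = q^{-2} KE, \qquad FK = q^{2} KF, \qquad EF = FE + \frac{K - K^{-1}}{q - q^{-1}},
\]
using the first two relations to push all factors of $K^{\pm 1}$ into the middle and the third to move each $E$ to the right of each $F$ (the correction term has strictly smaller $E$- and $F$-degree, so the induction closes).

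For linear independence, the cleanest route is to construct a faithful $\U$-module on which the distinct monomials $F^i K^h E^j$ act by linearly independent operators. The standard construction takes the Verma module $M(\lambda)$ with a generic highest weight vector $v_\lambda$: then $F^i v_\lambda$ ($i \in \N$) are linearly independent weight vectors of weight $\lambda q^{-2i}$, and by choosing $\lambda$ to be an indeterminate (or by working over the ring $\F[\lambda, \lambda^{-1}]$ and then specializing) one arranges that no nonzero $\F$-linear combination $\sum c_{i,h,j} F^i K^h E^j$ annihilates every $v_\lambda$. Alternatively, one can exhibit the faithful action of $\U$ on $\F[x,x^{-1},y]$ where $K, E, F$ act as suitable $q$-difference operators; the action of $F^i K^h E^j$ on the monomials $y^n$ yields Laurent polynomials in $x$ whose exponents separate $(i,h,j)$.

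The main obstacle is the linear independence step, since the spanning argument is a purely mechanical normal-form computation. The subtlety lies in producing enough "generic" eigenvalues to distinguish all triples $(i,h,j)$; this is precisely why the Verma module construction, or equivalently the use of a polynomial representation, is preferable to any purely combinatorial (e.g.\ diamond lemma) approach, which would require verifying confluence for every overlap of the rewriting rules. Given that this is a textbook result recorded as \cite[Theorem 1.5]{jantzen}, the expected proof here is a one-line citation rather than a reproduction of either argument.
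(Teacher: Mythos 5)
Your proposal is consistent with the paper: the paper offers no proof of this lemma at all, only the citation to \cite[Theorem 1.5]{jantzen}, which is exactly what you predict, and your sketch (spanning by rewriting words into the normal form $F^iK^hE^j$ via the defining relations, linear independence via a faithful representation with a generic highest weight) is the standard argument behind that citation. No gaps worth flagging.
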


For each $n\in \Z$ let $U_n$ denote the $\F$-subspace of $\U$ spanned by
\begin{gather*}
F^i K^h E^j
\qquad
\hbox{for all $h\in \Z$ and $i,j\in \N$ with $j-i=n$}.
\end{gather*}
By \cite[\S1.9]{jantzen} the $\F$-subspaces $\{U_n\}_{n\in \Z}$ of $\U$ give a $\Z$-graded structure of $\U$. In other words we have
\setlist[enumerate,1]{leftmargin=2.4em}
\begin{enumerate}[topsep=0.5em,itemsep=0.3em,partopsep=0.5em]
\item[(G1)] $\U=\bigoplus_{n\in \Z} U_n$.

\item[(G2)] $U_m\cdot U_n\subseteq U_{m+n}$ for all $m,n\in \Z$.
\end{enumerate}
\setlist[enumerate,1]{leftmargin=2em}

By \cite[Lemma 2.7]{jantzen} the element
\begin{gather*}
EF + \frac{q^{-1} K+q K^{-1}}{(q-q^{-1})^2}
=
FE + \frac{q K+q^{-1} K^{-1}}{(q-q^{-1})^2}
\end{gather*}
is central in $\U$. The central element is called the {\it Casimir element} of $\U$.
Multiplying the Casimir element by $(q-q^{-1})^2$ we obtain the element
\begin{align}\label{e:Lambda}
\begin{split}
\Lambda
&=
(q-q^{-1})^2 EF + q^{-1} K+q K^{-1}
\\
&=
(q-q^{-1})^2 FE + q K+q^{-1} K^{-1}.
\end{split}
\end{align}
Throughout this paper we use the normalized Casimir element $\Lambda$ instead of the original Casimir element of $\U$.

\begin{lem}\label{lem:EF}
For each $i\in \N$ the following relations {\rm (i)}, {\rm (ii)} hold in $\U$:
\begin{enumerate}
\item
$
E^i F^i =
\displaystyle{
\prod\limits_{h=1}^i
\frac{\Lambda-q^{1-2h}K-q^{2h-1}K^{-1}}{(q-q^{-1})^2}}.$

\item
$
F^i E^i=
\displaystyle{
\prod\limits_{h=1}^i\frac{\Lambda-q^{2h-1}K-q^{1-2h}K^{-1}}{(q-q^{-1})^2}}.$
\end{enumerate}
\end{lem}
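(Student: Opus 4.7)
The plan is to prove (i) and (ii) simultaneously by induction on $i$, using the centrality of $\Lambda$ together with the commutation relations between $E$, $F$ and $K^{\pm 1}$ to slide a single power of $E$ (resp.\ $F$) through the product delivered by the inductive hypothesis. The base case $i=0$ is trivial (empty product equals $1$); the $i=1$ instance is exactly the two rearrangements of (\ref{e:Lambda}), giving $EF=(\Lambda-q^{-1}K-qK^{-1})/(q-q^{-1})^2$ and $FE=(\Lambda-qK-q^{-1}K^{-1})/(q-q^{-1})^2$, which match the $h=1$ factors in (i) and (ii) respectively.

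For the inductive step of (i) I would write $E^{i+1}F^{i+1}=E\cdot E^iF^i\cdot F$, substitute the inductive formula, and push $E$ past each factor of the product. From $KE=q^2EK$ one has $EK=q^{-2}KE$ and $EK^{-1}=q^2K^{-1}E$, and $\Lambda$ is central, so a short computation gives the key sliding identity
\[
E\cdot\bigl(\Lambda-q^{1-2h}K-q^{2h-1}K^{-1}\bigr)
=\bigl(\Lambda-q^{1-2(h+1)}K-q^{2(h+1)-1}K^{-1}\bigr)\cdot E.
\]
Passing $E$ through the $h$-th factor therefore shifts its index from $h$ to $h+1$. Doing this for $h=1,\dots,i$ reindexes the product to $h=2,\dots,i+1$ and leaves a bare $EF$ on the right, which by the $i=1$ case supplies precisely the missing $h=1$ factor. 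All factors in the resulting product lie in the commutative subalgebra generated by $\Lambda$ and $K^{\pm 1}$, so their order is immaterial, and the product reassembles as $\prod_{h=1}^{i+1}$, establishing (i) for $i+1$. The argument for (ii) is entirely symmetric: from $KF=q^{-2}FK$ one gets $FK=q^2KF$ and $FK^{-1}=q^{-2}K^{-1}F$, and the analogous sliding identity shifts the $h$-th factor of the product in (ii) to the $(h+1)$-th, with the residual $FE$ supplying its $h=1$ factor.

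The only genuine content is the sliding identity above, which is nothing more than careful bookkeeping of the $q$-exponents that appear when $E$ or $F$ passes through $K^{\pm 1}$. The only subtlety worth flagging is that the new $h=1$ factor of the formula for $i+1$ does not come from within the inductive product but rather from the freshly exposed $EF$ (respectively $FE$) at the end of the sliding computation; the pairwise commutativity of all factors, inherited from $[\Lambda, K^{\pm 1}]=0$, is what lets one reorder everything into the stated product.
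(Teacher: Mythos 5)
Your proof is correct and follows the same route as the paper, which disposes of this lemma in one line by induction on $i$ together with the two rearrangements of (\ref{e:Lambda}). The sliding identity you isolate, $E(\Lambda-q^{1-2h}K-q^{2h-1}K^{-1})=(\Lambda-q^{1-2(h+1)}K-q^{2(h+1)-1}K^{-1})E$ and its $F$-analogue, is exactly the bookkeeping the paper leaves to the reader, and your handling of the freshly exposed $EF$ (resp.\ $FE$) as the missing $h=1$ factor is accurate.
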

\begin{proof}
Proceed by inductions on $i$ and apply (\ref{e:Lambda}).
\end{proof}

\begin{lem}\label{lem:basisU}
For each $n\in \N$ the following {\rm (i)}, {\rm (ii)} hold:
\begin{enumerate}
\item $U_n$ has the $\F$-basis
\begin{gather*}
\Lambda^i K^h E^n
\qquad
h\in \Z,\quad
i\in \N.
\end{gather*}

\item $U_{-n}$ has the $\F$-basis
\begin{gather*}
\Lambda^i K^h F^n
\qquad
h\in \Z,\quad
i\in \N.
\end{gather*}
\end{enumerate}
\end{lem}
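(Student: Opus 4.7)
The plan is to establish (i) first, then obtain (ii) by a parallel argument. The strategy is to compare the proposed spanning set with the PBW basis of $U_n$ implied by Lemma \ref{lem:PBW}, namely $\{F^i K^h E^{n+i} : i \in \N,\ h \in \Z\}$, and to show that the change-of-basis relation is triangular in $i$ with nonzero diagonal entries. From (\ref{e:Lambda}) one sees $\Lambda \in U_0$, so $\Lambda^i K^h E^n \in U_n$ by (G1)--(G2); it thus suffices to exhibit the triangular structure.

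The key identity is obtained by inverting Lemma \ref{lem:EF}(ii). Expanding $(q-q^{-1})^{2i} F^i E^i = \prod_{h=1}^{i}(\Lambda - q^{2h-1} K - q^{1-2h} K^{-1})$ as a polynomial in $\Lambda$ and solving recursively on $i$ for $\Lambda^i$ yields
\begin{gather*}
\Lambda^i = (q-q^{-1})^{2i}\, F^i E^i + \sum_{j=0}^{i-1} p_{i,j}(K,K^{-1})\, F^j E^j
\end{gather*}
for some Laurent polynomials $p_{i,j} \in \F[K, K^{-1}]$. Right-multiplying by $K^h E^n$ and using the commutations $E^j K^h = q^{-2jh} K^h E^j$ and $K^{\pm 1} F^j = q^{\mp 2j} F^j K^{\pm 1}$, I get
\begin{gather*}
\Lambda^i K^h E^n = (q-q^{-1})^{2i} q^{-2ih}\, F^i K^h E^{n+i} + \sum_{j<i}\sum_{h' \in \Z} a_{i,h,j,h'}\, F^j K^{h'} E^{n+j}
\end{gather*}
with only finitely many nonzero scalars $a_{i,h,j,h'} \in \F$. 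Since $q^4 \neq 1$, the diagonal coefficient $(q-q^{-1})^{2i} q^{-2ih}$ is nonzero. Hence the transition matrix from $\{\Lambda^i K^h E^n\}$ to the PBW basis is block upper-triangular in $i$ with invertible diagonal, which yields both linear independence (by Lemma \ref{lem:PBW}, extracting the top $i$-stratum of any supposed dependence) and spanning (by inverting the triangle recursively on $i$).

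For (ii), the mirror argument uses Lemma \ref{lem:EF}(i) to derive $\Lambda^i = (q-q^{-1})^{2i}\, E^i F^i + (\text{lower powers of } \Lambda)$, and compares $\{\Lambda^i K^h F^n\}$ with the PBW basis $\{F^{n+i} K^h E^i : i\in \N,\ h\in \Z\}$ of $U_{-n}$; commuting $K^h$ past $F^{i+n}$ shows the leading contribution of $\Lambda^i K^h F^n$ in the PBW basis is $(q-q^{-1})^{2i} q^{-2(i+n)h}\, F^{n+i} K^h E^i$, after which the same triangularity argument concludes. The main obstacle in both parts is not the triangularity itself but the bookkeeping of $q$-powers and Laurent-polynomial coefficients arising from commuting $K^h$ past powers of $E$ and $F$; fortunately one never needs explicit formulas for the lower-order terms, only that the diagonal coefficient is nonzero.
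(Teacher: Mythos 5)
Your proof is correct and is essentially the same argument as the paper's: both compare $\{\Lambda^i K^h E^n\}$ (resp. $\{\Lambda^i K^h F^n\}$) against the PBW basis of $U_{\pm n}$ and exploit the triangular-in-$i$ relation coming from Lemma \ref{lem:EF} with nonzero leading coefficient $(q-q^{-1})^{\pm 2i}q^{\mp 2ih}$. The paper merely packages the triangularity as an induction on the filtration $U_{n,\ell}$ by the power of $F$, whereas you phrase it as an invertible block-triangular change of basis; the content is identical.
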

\begin{proof}
Fix an $n\in \N$. By Lemma \ref{lem:PBW} the elements
\begin{gather*}
F^i K^h E^{i+n}
\qquad
\hbox{for all $h\in \Z$ and $i\in \N$}
\end{gather*}
are an $\F$-basis of $U_n$. For each $\ell\in \N$ let $U_{n,\ell}$ denote the $\F$-subspace of $U_n$ spanned by $F^i K^h E^{i+n}$ for all $h\in \Z$ and $0\leq i \leq \ell$. Clearly
\begin{gather}\label{e:Un}
U_{n}=\bigcup_{\ell\in \N} U_{n,\ell}.
\end{gather}
We claim that the elements
$$
\Lambda^i K^h E^n
\qquad
\hbox{for all $h\in \Z$ and $0\leq i\leq \ell$}
$$
form an $\F$-basis of $U_{n,\ell}$. To see this we proceed by induction on $\ell$. It is nothing to prove for $\ell=0$. Suppose that $\ell\geq 1$. By construction $U_{n,\ell}/U_{n,\ell-1}$ has the $\F$-basis
\begin{gather*}
F^\ell K^h E^{\ell+n}
+U_{n,\ell-1}
\qquad
\hbox{for all $h\in \Z$}.
\end{gather*}
For each $h\in \Z$ we have
\begin{align*}
F^\ell K^h E^{\ell+n}
=
q^{2h\ell} K^h F^\ell E^{\ell +n} 
=
q^{2h\ell}(q-q^{-1})^{-2\ell}
\Lambda^\ell K^h E^n \pmod{U_{n,\ell-1}}.
\end{align*}
The first equality follows from the defining relation $KF=q^{-2}FK$ and the second equality follows from Lemma \ref{lem:EF}(ii) and the induction hypothesis. Therefore the elements
$$
\Lambda^\ell K^h E^n
+U_{n,\ell-1}
\qquad
\hbox{for all $h\in \Z$}
$$
are an $\F$-basis of $U_{n,\ell} /U_{n,\ell-1}$.  Combined with the induction hypothesis the claim follows. Now (i) follows from the claim and (\ref{e:Un}). Statement (ii) follows by a similar argument.
\end{proof}

Fix $k\in \N^*$. By (G1) and (G2) the $\F$-vector spaces
\begin{gather*}
U_{n_1}\otimes U_{n_2}\otimes \cdots \otimes U_{n_k}
\qquad
\hbox{for all $n_1,n_2,\ldots,n_k\in \Z$}
\end{gather*}
provide a $\Z^k$-graded structure of $\U^{\otimes k}$.
Thus, for each $u\in \U^{\otimes k}$
there exist unique elements $u_{n_1,n_2,\ldots,n_k}\in U_{n_1}\otimes U_{n_2}\otimes \cdots \otimes U_{n_k}$ for all $n_1,n_2,\ldots,n_k\in \Z$ such that
\begin{gather*}
u=\sum_{n_1,n_2,\ldots,n_k\in \Z} u_{n_1,n_2,\ldots,n_k}.
\end{gather*}
Here $u_{n_1,n_2,\ldots,n_k}$ is called the {\it homogeneous component of $u$ of degree $(n_1,n_2,\ldots,n_k)$}.

\begin{lem}\label{lem:basisfoldU}
For all $k\in \N^*$ and $n_1,n_2,\ldots,n_k\in \Z$ 
the elements 
\begin{align*}
\Lambda^{i_1} K^{h_1} X_1^{n_1}
\otimes
\Lambda^{i_2} K^{h_2} X_2^{n_2}
\otimes
\cdots
\otimes
\Lambda^{i_k} K^{h_k} X_k^{n_k}
\qquad
i_1,i_2,\ldots,i_k\in \N,
\quad 
h_1,h_2,\ldots,h_k\in \Z
\end{align*}
are an $\F$-basis of $U_{n_1}\otimes U_{n_2}\otimes \cdots \otimes U_{n_k}$ where $X_i=E$ if $n_i\geq 0$ and $X_i=F$ if $n_i<0$ for all $1\leq i\leq k$.
\end{lem}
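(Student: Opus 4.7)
The plan is to reduce this to Lemma \ref{lem:basisU} by exploiting the fact that a tensor product of bases is a basis of the tensor product.

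First, I would apply Lemma \ref{lem:basisU} to each factor separately. For each $j$ with $1 \leq j \leq k$, Lemma \ref{lem:basisU}(i) (when $n_j \geq 0$) or Lemma \ref{lem:basisU}(ii) (when $n_j < 0$) supplies an explicit $\F$-basis of $U_{n_j}$ whose elements are precisely the monomials
\begin{gather*}
\Lambda^{i_j} K^{h_j} X_j^{n_j}
\qquad
i_j \in \N, \quad h_j \in \Z,
\end{gather*}
where $X_j^{n_j}$ is interpreted as $E^{n_j}$ when $n_j \geq 0$ and as $F^{-n_j}$ when $n_j < 0$, matching the conventions of the two parts of Lemma \ref{lem:basisU}.

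Next, I would invoke the standard fact from linear algebra that if $B_j$ is an $\F$-basis of an $\F$-vector space $V_j$ for each $1 \leq j \leq k$, then the collection of simple tensors $b_1 \otimes b_2 \otimes \cdots \otimes b_k$ with $b_j \in B_j$ is an $\F$-basis of the tensor product $V_1 \otimes V_2 \otimes \cdots \otimes V_k$. Applying this with $V_j = U_{n_j}$ and $B_j$ the basis produced in the previous step yields exactly the set displayed in the statement.

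I do not expect any substantive obstacle; the result is essentially a packaging of Lemma \ref{lem:basisU} through the universal property of tensor products. The only minor point deserving attention is the notational convention on the superscript of $X_j$ when $n_j < 0$, and the fact that if one prefers, the multifactor assertion can equivalently be obtained by a short induction on $k$ using the associativity $\U^{\otimes k} \cong \U^{\otimes (k-1)} \otimes \U$ together with the base case $k = 1$ supplied by Lemma \ref{lem:basisU}.
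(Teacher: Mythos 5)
Your proposal is correct and matches the paper's argument, which simply declares the lemma ``Immediate from Lemma \ref{lem:basisU}''; the reduction via the fact that tensor products of bases form a basis of the tensor product is exactly the intended reasoning. Your remark about reading $X_j^{n_j}$ as $F^{-n_j}$ when $n_j<0$ is a fair clarification of the paper's notational shorthand.
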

\begin{proof}
Immediate from Lemma \ref{lem:basisU}.
\end{proof}

\subsection{The Hopf algebra structure of $\U$}\label{s:hopf}

Recall that the algebras and algebra homomorphisms can be defined in the following ways:
An $\F$-vector space $\A$ endowed with two $\F$-linear maps $\nabla:\A\otimes \A\to \A$ and $\iota:\F\to \A$ is called an {\it $\F$-algebra} if
\begin{gather*}
\nabla\circ (\nabla\otimes 1)=\nabla\circ (1\otimes \nabla)
\end{gather*}
and $\nabla\circ (1\otimes \iota)$ and $\nabla\circ (\iota\otimes 1)$ are equal to the canonical maps $\A\otimes \F\to \A$ and $\F\otimes \A\to \A$, respectively. Here $\nabla$ and $\iota$ are called the {\it multiplication} and the {\it unit} of an $\F$-algebra $\A$ respectively. Suppose that $\A$ and $\A'$ are two $\F$-algebras. Let $\nabla$, $\nabla'$ denote the multiplications of $\A$, $\A'$ and let $\iota$, $\iota'$ denote the units of $\A$, $\A'$ respectively. An $\F$-linear map $\phi:\A\to\A'$ is called an {\it $\F$-algebra homomorphism} if
\begin{gather*}
\nabla'\circ (\phi\otimes \phi)=\phi\circ \nabla,
\\
\iota'=\phi\circ \iota.
\end{gather*}

The coalgebras and coalgebra homomorphisms are defined in dual ways:
An $\F$-vector space $\A$ endowed with two $\F$-linear maps $\Delta:\A\to \A\otimes \A$ and $\epsilon:\A\to \F$ is called an {\it $\F$-coalgebra} if
\begin{gather}\label{e:Delta}
(\Delta\otimes 1)\circ \Delta=(1\otimes \Delta)\circ \Delta
\end{gather}
and $(1\otimes \epsilon)\circ \Delta$ and $(\epsilon\otimes 1)\circ \Delta$ are equal to the canonical maps $\A\to \A\otimes \F$ and $\A\to \F\otimes \A$, respectively.
Here $\Delta$ and $\epsilon$ are called the {\it comultiplication} and the {\it counit} of an $\F$-coalgebra $\A$ respectively. Suppose that $\A$ and $\A'$ are two $\F$-coalgebras. Let $\Delta$, $\Delta'$ denote the comultiplications of $\A$, $\A'$ and let $\epsilon$, $\epsilon'$ denote the counits of $\A$, $\A'$ respectively. An $\F$-linear map $\phi:\A\to\A'$ is called an {\it $\F$-coalgebra homomorphism} if
\begin{gather}
(\phi\otimes \phi)\circ \Delta=\Delta'\circ \phi,
\label{e:coalghom}\\
\epsilon=\epsilon'\circ \phi. 
\notag
\end{gather}
An $\F$-vector space $\A$ is called an {\it $\F$-bialgebra} if $\A$ is an $\F$-algebra and an $\F$-coalgebra such that the multiplication and the unit of $\A$ are $\F$-coalgebra homomorphisms or equivalently the comultiplication and the counit of $\A$ are $\F$-algebra homomorphisms. Suppose that $\A$ and $\A'$ are two $\F$-bialgebras.
An $\F$-linear map $\phi:\A\to \A'$ is called an {\it $\F$-bialgebra homomorphism} if $\phi$ is an $\F$-algebra homomorphism and an $\F$-coalgebra homomorphim. An $\F$-bialgebra $\A$ endowed with an $\F$-linear map $S:\A\to \A$ is called an {\it $\F$-Hopf algebra} if the multiplication $\nabla$, the unit $\iota$, the comultiplication $\Delta$ and the counit $\epsilon$ of $\A$ satisfy 
\begin{gather*}
\nabla \circ (1\otimes S)\circ \Delta
=\iota\circ\epsilon
=\nabla \circ (S\otimes 1)\circ \Delta.
\end{gather*}
Here $S$ is called the {\it antipode} of an $\F$-Hopf algebra $\A$. 
Suppose that $\A$ and $\A'$ are two $\F$-Hopf algebras. Let $S$, $S'$ denote the antipodes of $\A$, $\A'$ respectively.
An $\F$-linear map $\phi:\A\to \A'$ is called an {\it $\F$-Hopf algebra homomorphism} if $\phi$ is an $\F$-bialgebra homomorphism satisfying 
\begin{gather*}
\phi\circ S=S'\circ \phi.
\end{gather*}

The quantum algebra $\U$ admits the Hopf algebra structure: The comultiplication $\Delta:\U\to \U\otimes \U$ is an $\F$-algebra homomorphism defined by
\begin{eqnarray}
\Delta(E) &=& E\otimes 1+K\otimes E,
\label{e:DE}
\\
\Delta(F) &=& F\otimes K^{-1}+1\otimes F,
\label{e:DF}
\\
\Delta(K) &=& K\otimes K.
\label{e:DK}
\end{eqnarray}
The counit $\epsilon:\U\to \F$ is an $\F$-algebra homomorphism defined by
\begin{gather*}
\epsilon (E)=0,
\qquad
\epsilon (F)=0,
\qquad
\epsilon (K)=1.
\end{gather*}
The antipode $S:\U\to \U$ is an $\F$-algebra antiautomorphism defined by
\begin{gather*}
S(E)=-K^{-1} E,
\qquad
S(F)=-FK,
\qquad
S(K)=K^{-1}.
\end{gather*}
Let $\Delta_0$ denote the identity map on $\U$.
We recurrently define
\begin{gather*}
\Delta_{n}=(
1^{\otimes (n-1)}
\otimes \Delta)\circ \Delta_{n-1}
\qquad
\hbox{for all $n\in \N^*$}.
\end{gather*}
The map $\Delta_n$ ($n\in \N$) is called the {\it $n$-fold comultiplication} of $\U$.

\begin{lem}\label{lem:Dn}
For all $n\in \N^*$ and $1\leq i\leq n$ the following equation holds:
$$
\Delta_{n}=(
1^{\otimes n-i}
\otimes
\Delta
\otimes
1^{\otimes  i-1}
)
\circ \Delta_{n-1}.
$$
\end{lem}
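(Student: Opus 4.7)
My plan is to induct on $n$. The base case $n=1$ admits only $i=1$ and follows immediately from the definitions $\Delta_1=\Delta$ and $\Delta_0=\mathrm{id}_\U$. For $n\geq 2$, the sub-case $i=1$ is just the defining recursion for $\Delta_n$, so the real work is to handle $2\leq i\leq n$.

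For such $i$, I would first unfold $\Delta_n=(1^{\otimes (n-1)}\otimes\Delta)\circ\Delta_{n-1}$ by definition and then rewrite the inner $\Delta_{n-1}$ using the inductive hypothesis with index $j=i-1\in\{1,\ldots,n-1\}$. On the right-hand side of the desired equality, I would likewise unfold $\Delta_{n-1}=(1^{\otimes (n-2)}\otimes\Delta)\circ\Delta_{n-2}$ by the inductive hypothesis with $j=1$. Both expressions then become compositions of two ``apply-$\Delta$-at-one-slot'' maps acting on $\Delta_{n-2}$, so the claim reduces to the identity
$$(1^{\otimes (n-1)}\otimes\Delta)\circ(1^{\otimes (n-i)}\otimes\Delta\otimes 1^{\otimes (i-2)}) = (1^{\otimes (n-i)}\otimes\Delta\otimes 1^{\otimes (i-1)})\circ(1^{\otimes (n-2)}\otimes\Delta)$$
viewed as maps $\U^{\otimes (n-1)}\to\U^{\otimes (n+1)}$.

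When $i\geq 3$ the two $\Delta$-insertions sit in disjoint tensor slots, so both sides collapse to $1^{\otimes (n-i)}\otimes\Delta\otimes 1^{\otimes (i-2)}\otimes\Delta$ via the elementary rule $(A\otimes 1)\circ(1\otimes B)=A\otimes B$ in a tensor product of algebra maps. When $i=2$ the two insertions are adjacent, occupying the last two slots, and the identity reduces to the coassociativity axiom $(1\otimes\Delta)\circ\Delta=(\Delta\otimes 1)\circ\Delta$ recorded in (\ref{e:Delta}). The only obstacle I anticipate is notational, namely keeping the superscripts correctly aligned as the $\Delta$-slot moves; the conceptual content is simply that coassociativity permits the application of $\Delta$ to slide freely along the output of $\Delta_{n-1}$.
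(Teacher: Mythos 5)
Your argument is correct and is exactly the paper's proof (the paper simply says ``proceed by induction on $n$ and apply~(\ref{e:Delta})''), with the details of the induction filled in: the displayed interchange identity is well-typed and holds, reducing to coassociativity when $i=2$ and to the rule $(\phi\otimes 1)\circ(1\otimes\psi)=(1\otimes\psi)\circ(\phi\otimes 1)$ when $i\geq 3$. The one notational slip is in the aside for $i\geq 3$: the collapsed map on $\U^{\otimes(n-1)}$ should be $1^{\otimes (n-i)}\otimes\Delta\otimes 1^{\otimes (i-3)}\otimes\Delta$ (the middle block has $i-3$, not $i-2$, factors), an off-by-one that does not affect the argument.
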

\begin{proof}
Proceed by induction on $n$ and apply (\ref{e:Delta}).
\end{proof}

Note that each $\U^{\otimes n}$-module can be regarded as a $\U$-module by pulling back via $\Delta_{n-1}$ for $n\in \N^*$.

\subsection{Verma $\U$-modules and finite-dimensional $\U$-modules}\label{s:Umodule}

For notational convenience we set
$$
[\lambda;n]=\frac{\lambda q^n-\lambda^{-1} q^{-n}}{q-q^{-1}}
\qquad
\hbox{for any nonzero $\lambda\in \F$ and $n\in \Z$}.
$$

For any nonzero scalar $\lambda\in \F$  there exists a unique $\U$-module
$M(\lambda)$ that has an $\F$-basis $\{m^{(\lambda)}_i\}_{i\in \N}$ such that
\begin{eqnarray*}
K m^{(\lambda)}_i
&=&
\lambda q^{-2i} m^{(\lambda)}_i
\qquad
\hbox{for all $i\in \N$},
\\
F m^{(\lambda)}_i
&=&
[i+1]m^{(\lambda)}_{i+1}
\qquad
\hbox{for all $i\in \N$},
\\
Em^{(\lambda)}_i
&=&
[\lambda;1-i]
m^{(\lambda)}_{i-1}
\qquad
\hbox{for all $i\in \N^*$},
\qquad
E m^{(\lambda)}_0=0.
\end{eqnarray*}
The $\U$-module $M(\lambda)$ is called the {\it Verma $\U$-module}. By \cite[\S2.4]{jantzen} the Verma $\U$-module $M(\lambda)$ satisfies the following universal property:

\begin{prop}\label{prop:UPUqsl2}
Let $\lambda$ denote a nonzero scalar in $\F$. If a $\U$-module $V$ contains a vector $v$ such that
\begin{gather*}
Ev=0,
\qquad
Kv=\lambda v,
\end{gather*}
then there exists a unique $\U$-module homomorphism $M(\lambda)\to V$ that sends $m^{(\lambda)}_0$ to $v$.
\end{prop}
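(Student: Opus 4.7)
The plan is to construct $\phi: M(\lambda) \to V$ by declaring $\phi(m^{(\lambda)}_i) := F^i v / [i]!$ for $i \in \N$, with the convention $[0]! = 1$. By construction $\phi(m^{(\lambda)}_0) = v$ and $F\phi(m^{(\lambda)}_i) = [i+1]\phi(m^{(\lambda)}_{i+1}) = \phi(Fm^{(\lambda)}_i)$, so compatibility with the $F$-action is automatic. It remains to check compatibility with the actions of $K$ and $E$.

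For the $K$-action, iterating the defining relation $KF = q^{-2}FK$ together with the hypothesis $Kv = \lambda v$ immediately yields $KF^i v = \lambda q^{-2i} F^i v$, hence $K\phi(m^{(\lambda)}_i) = \lambda q^{-2i}\phi(m^{(\lambda)}_i) = \phi(Km^{(\lambda)}_i)$. For the $E$-action, I would prove by induction on $i \geq 1$ the key identity
\[
EF^i v \;=\; [i]\,[\lambda;1-i]\,F^{i-1}v,
\]
using the base case $Ev = 0$, the defining relation $EF - FE = (K-K^{-1})/(q-q^{-1})$, and the previously established $K$-eigenvalue of $F^{i-1}v$. The inductive step reduces to a routine $q$-bracket identity of the form $[i][\lambda;1-i] - [i-1][\lambda;2-i] = [\lambda;2-2i]$. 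Dividing the displayed identity by $[i]!$ then gives $E\phi(m^{(\lambda)}_i) = [\lambda;1-i]\phi(m^{(\lambda)}_{i-1}) = \phi(Em^{(\lambda)}_i)$ for $i \geq 1$, while $E\phi(m^{(\lambda)}_0) = Ev = 0$. Thus $\phi$ intertwines the generators $E, K, F$ of $\U$ and hence is a $\U$-module homomorphism.

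Uniqueness is immediate: iterating $Fm^{(\lambda)}_{j} = [j+1]m^{(\lambda)}_{j+1}$ gives $F^i m^{(\lambda)}_0 = [i]!\,m^{(\lambda)}_i$, so any $\U$-homomorphism $\phi$ with $\phi(m^{(\lambda)}_0) = v$ must satisfy $\phi(m^{(\lambda)}_i) = F^i v/[i]!$, pinning down its values on the $\F$-basis $\{m^{(\lambda)}_i\}_{i \in \N}$. The main obstacle is the $E$-verification, which requires the careful inductive bookkeeping of $q$-brackets described above; a secondary technicality is the well-posedness of dividing by $[i]!$, but since the Verma modules of the paper are understood generically this presents no obstruction in the present context.
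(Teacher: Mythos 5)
Your argument is correct, and it takes a genuinely different route from the paper, which offers no proof at all: Proposition~\ref{prop:UPUqsl2} is simply quoted from \cite[\S 2.4]{jantzen}, where the universal property falls out formally from the realization of $M(\lambda)$ as the cyclic left module $\U/\bigl(\U E+\U(K-\lambda)\bigr)$. Your route instead verifies the intertwining relations directly on the basis $\{m^{(\lambda)}_i\}_{i\in\N}$; the induction $EF^iv=[i]\,[\lambda;1-i]\,F^{i-1}v$ and the bracket identity $[i][\lambda;1-i]-[i-1][\lambda;2-i]=[\lambda;2-2i]$ both check out, and uniqueness via $F^im^{(\lambda)}_0=[1][2]\cdots[i]\,m^{(\lambda)}_i$ is fine. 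What the quotient construction buys is independence from the normalization of the basis; what your computation buys is a self-contained proof. The one point you should not wave away is the division by $[1][2]\cdots[i]$. The paper's standing hypothesis is only $q^4\ne 1$, so some bracket $[i]$ may vanish (take $q$ a primitive cube root of unity, so $[3]=0$). In that case $F^im^{(\lambda)}_0=0$ while $m^{(\lambda)}_i\ne 0$, so $M(\lambda)$ is not generated by $m^{(\lambda)}_0$ and the universal property in this normalization actually fails: for $q^3=1$ and $\lambda^2\ne q$ there is no $\U$-module homomorphism from $M(\lambda)$ to its submodule spanned by $m^{(\lambda)}_0$, $m^{(\lambda)}_1$, $m^{(\lambda)}_2$ sending $m^{(\lambda)}_0$ to $m^{(\lambda)}_0$, since the required image of $m^{(\lambda)}_3$ would have to be a $\lambda$-eigenvector of $K$ killed by neither $E$-constraint. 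So the condition $q^{2i}\ne 1$ for all $i\in\N^*$ is not a technicality of your construction but a necessary hypothesis of the statement itself; since every later invocation of the proposition in the paper occurs under the assumption that $q$ is not a root of unity, nothing downstream is affected, but you should state the hypothesis explicitly rather than gesture at genericity.
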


Recall the normalized Casimir element $\Lambda$ from (\ref{e:Lambda}). A direct calculation yields that

\begin{lem}\label{lem:CasVerma}
Let $\lambda$ denote a nonzero scalar in $\F$. Then the element $\Lambda$ acts on the Verma $\U$-module $M(\lambda)$ as scalar multiplication by
$
\lambda q+\lambda^{-1} q^{-1}.
$
\end{lem}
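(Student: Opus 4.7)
The plan is to exploit the centrality of $\Lambda$ together with the fact that $M(\lambda)$ is cyclic, generated by the highest-weight vector $m^{(\lambda)}_0$. Since $\Lambda$ commutes with every element of $\U$ and $M(\lambda) = \U \cdot m^{(\lambda)}_0$, it suffices to compute the action of $\Lambda$ on $m^{(\lambda)}_0$ alone: if $\Lambda m^{(\lambda)}_0 = c\, m^{(\lambda)}_0$ for some scalar $c \in \F$, then for any $u \in \U$ we get $\Lambda (u m^{(\lambda)}_0) = u \Lambda m^{(\lambda)}_0 = c (u m^{(\lambda)}_0)$, so $\Lambda$ acts on all of $M(\lambda)$ as scalar multiplication by $c$.

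To evaluate $\Lambda$ on $m^{(\lambda)}_0$, I would use the second form of $\Lambda$ from (\ref{e:Lambda}), namely $\Lambda = (q-q^{-1})^2 FE + qK + q^{-1}K^{-1}$, which is ideally suited to a highest-weight vector since $E m^{(\lambda)}_0 = 0$ kills the first summand. What remains is the action of $qK + q^{-1}K^{-1}$ on $m^{(\lambda)}_0$, and from the definition of the Verma module $K m^{(\lambda)}_0 = \lambda\, m^{(\lambda)}_0$ (hence $K^{-1} m^{(\lambda)}_0 = \lambda^{-1} m^{(\lambda)}_0$), yielding $\Lambda m^{(\lambda)}_0 = (\lambda q + \lambda^{-1} q^{-1}) m^{(\lambda)}_0$.

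Combining the two observations gives $c = \lambda q + \lambda^{-1} q^{-1}$, which is the claim. There is no real obstacle here; the one thing to make sure of is that the second identity in (\ref{e:Lambda}) is being used rather than the first, since the $EF$-form would force an expansion via the commutation relation $EF - FE = (K - K^{-1})/(q - q^{-1})$ instead of vanishing outright on $m^{(\lambda)}_0$. Alternatively, one could verify the formula directly on each basis vector $m^{(\lambda)}_i$ by induction on $i$ using Lemma \ref{lem:EF}(ii), but the centrality argument is cleaner and avoids any computation with $q$-brackets.
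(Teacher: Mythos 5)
Your reduction to the highest-weight vector rests on the claim that $M(\lambda)$ is cyclic, i.e.\ $M(\lambda)=\U\cdot m^{(\lambda)}_0$, and that is where the argument has a gap under the paper's standing hypothesis, which is only $q^4\neq 1$ and in particular allows $q$ to be a root of unity. From Lemma \ref{lem:PBW} together with $E m^{(\lambda)}_0=0$ and $K m^{(\lambda)}_0=\lambda m^{(\lambda)}_0$ one gets $\U\, m^{(\lambda)}_0=\mathrm{span}\{F^i m^{(\lambda)}_0\mid i\in\N\}$, and $F^i m^{(\lambda)}_0=[1][2]\cdots[i]\,m^{(\lambda)}_i$. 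If, say, $q$ is a primitive third root of unity then $[3]=0$, so $F^i m^{(\lambda)}_0=0$ for all $i\geq 3$ and $\U\, m^{(\lambda)}_0$ is the proper submodule spanned by $m^{(\lambda)}_0,m^{(\lambda)}_1,m^{(\lambda)}_2$. Your argument then determines the action of $\Lambda$ only on that submodule, not on all of $M(\lambda)$. (A variant such as ``a central element preserves each $K$-eigenspace, which is one-dimensional'' fails for the same reason: at a root of unity the eigenvalues $\lambda q^{-2i}$ are not distinct.)

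The statement itself is true in full generality, and the repair is exactly the fallback you mention in your last sentence, which is also what the paper does (it cites only ``a direct calculation''): verify $\Lambda m^{(\lambda)}_i=(\lambda q+\lambda^{-1}q^{-1})m^{(\lambda)}_i$ for every $i$. Using $\Lambda=(q-q^{-1})^2FE+qK+q^{-1}K^{-1}$ one has $FE\,m^{(\lambda)}_i=[i]\,[\lambda;1-i]\,m^{(\lambda)}_i$, and
$(q^i-q^{-i})(\lambda q^{1-i}-\lambda^{-1}q^{i-1})+\lambda q^{1-2i}+\lambda^{-1}q^{2i-1}=\lambda q+\lambda^{-1}q^{-1}$
for all $i$, with no division by $q$-brackets anywhere; no induction or appeal to Lemma \ref{lem:EF} is even needed. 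If you prefer to keep the centrality-plus-cyclicity argument, you must add the hypothesis that $[i]\neq 0$ for all $i\in\N^*$ (e.g.\ $q$ not a root of unity), which the lemma as stated does not assume.
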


A necessary and sufficient condition for the Verma $\U$-module $M(\lambda)$ to be irreducible is given below. For a proof please see \cite[Proposition 2.5]{jantzen}.

\begin{lem}\label{lem:irrVerma}
For any nonzero scalar $\lambda\in \F$ the following {\rm (i)}, {\rm (ii)} are equivalent:
\begin{enumerate}
\item The Verma $\U$-module $M(\lambda)$ is irreducible.

\item 
$\lambda\not\in \{\pm q^n\,|\,n\in \N\}$. 
\end{enumerate}
\end{lem}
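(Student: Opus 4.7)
The plan is to work directly from the explicit $E$, $F$, $K$ action on the basis $\{m_i^{(\lambda)}\}_{i\in\N}$, together with the elementary observation that $[\lambda;k]=0$ if and only if $\lambda^2=q^{-2k}$, equivalently $\lambda=\pm q^{-k}$.

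For the implication (i) $\Rightarrow$ (ii) I argue the contrapositive. Suppose $\lambda=\pm q^n$ for some $n\in\N$. Then $[\lambda;-n]=0$, so the defining formula gives $E\cdot m_{n+1}^{(\lambda)}=[\lambda;-n]\,m_n^{(\lambda)}=0$. The $\F$-subspace $N=\mathrm{span}\{m_i^{(\lambda)}:i\geq n+1\}$ is stable under $K$ (each basis vector is a $K$-eigenvector), under $F$ (by $Fm_i^{(\lambda)}=[i+1]\,m_{i+1}^{(\lambda)}$), and under $E$ (the only nontrivial case $i=n+1$ was just verified). Moreover $N$ is nonzero since $m_{n+1}^{(\lambda)}\in N$ and proper since $m_0^{(\lambda)}\notin N$, so $M(\lambda)$ is reducible.

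For the implication (ii) $\Rightarrow$ (i), suppose $\lambda\notin\{\pm q^n:n\in\N\}$, so $[\lambda;1-i]\neq 0$ for every $i\in\N^*$. Iterating the $E$-action gives
$$E^i m_i^{(\lambda)}=\Bigl(\prod_{j=1}^i [\lambda;j-i]\Bigr)\,m_0^{(\lambda)},$$
a nonzero multiple of $m_0^{(\lambda)}$, while $E^i m_j^{(\lambda)}=0$ for $j<i$. Given a nonzero $\U$-submodule $N$, choose a nonzero $v=\sum_{i\in S}a_i\,m_i^{(\lambda)}\in N$ with $S$ finite and each $a_i\neq 0$, and apply $E^{\max S}$; every term except the one with $i=\max S$ dies, leaving a nonzero multiple of $m_0^{(\lambda)}$, whence $m_0^{(\lambda)}\in N$. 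Finally, the identity $F^i m_0^{(\lambda)}=[i]!\,m_i^{(\lambda)}$ (by a short induction from $Fm_i^{(\lambda)}=[i+1]\,m_{i+1}^{(\lambda)}$) recovers every basis vector in $N$, forcing $N=M(\lambda)$.

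The main obstacle is making the final recovery step rigorous when $q$ is a root of unity, where some $[i]!$ may vanish and the argument $F^i m_0^{(\lambda)}=[i]!\,m_i^{(\lambda)}$ no longer reaches every basis vector. In that regime one instead decomposes $v$ according to the $K$-eigenvalues $\lambda q^{-2i}$ to isolate individual $m_i^{(\lambda)}$'s, then applies the $E$-lowering argument componentwise; this is essentially the content of Jantzen's Proposition 2.5, whose proof carries over verbatim under the standing assumption $q^4\neq 1$.
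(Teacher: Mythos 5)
The paper does not prove this lemma itself; it simply cites Jantzen's Proposition 2.5. Your argument is the standard one and coincides in structure with the paper's own proof of the finite-dimensional analogue (Lemma \ref{lem:irrVn}): exhibit the submodule spanned by the $m_i^{(\lambda)}$ with $i\geq n+1$ when $\lambda=\pm q^n$, and in the other direction apply a power of $E$ to a highest-index term to reach $m_0^{(\lambda)}$, then powers of $F$ to recover every basis vector. The identification of when $[\lambda;k]$ vanishes, the computation $E^i m_i^{(\lambda)}=\bigl(\prod_{j=1}^i[\lambda;j-i]\bigr)m_0^{(\lambda)}$, and the nonvanishing of those factors under (ii) are all correct, so the body of your proof is sound.

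The problem is your final paragraph. The root-of-unity case is not an obstacle that a more careful argument can overcome: if $q$ is a root of unity there is some $n_0\in\N^*$ with $[n_0]=0$, and then the span of $m_0^{(\lambda)},\ldots,m_{n_0-1}^{(\lambda)}$ is a nonzero proper submodule of $M(\lambda)$ for \emph{every} nonzero $\lambda$ (it is preserved by $K$ and $E$ for trivial reasons, and by $F$ because $Fm_{n_0-1}^{(\lambda)}=[n_0]m_{n_0}^{(\lambda)}=0$), while condition (ii) certainly holds for some $\lambda$. So the implication (ii) $\Rightarrow$ (i) is simply false in that regime and no proof of it exists. Your proposed workaround also fails on its own terms: when $q^{2n_0}=1$ the $K$-eigenvalues $\lambda q^{-2i}$ and $\lambda q^{-2(i+n_0)}$ coincide, so decomposing by $K$-eigenvalues does not isolate individual $m_i^{(\lambda)}$'s. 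The correct resolution is that the lemma carries the implicit hypothesis, present in Jantzen's Proposition 2.5 and in every later application of the lemma in this paper, that $q$ is not a root of unity, equivalently $[i]\neq0$ for all $i\in\N^*$; under that hypothesis your proof is already complete as written and the last paragraph should simply be deleted.
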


Fix $n\in \N$ and $\e\in \{\pm 1\}$. Now we set $\lambda=\e q^n$.
Let $N(\lambda)$ denote the $\F$-subspace of $M(\lambda)$ spanned by
\begin{gather*}
m^{(\lambda)}_i
\qquad
\hbox{for all $i\geq n+1$}.
\end{gather*}
Clearly $N(\lambda)$ is invariant under $K$ and $F$.
Since $[\lambda; -n]=0$ in the setting, the $\F$-vector space $N(\lambda)$ is invariant under $E$.
Therefore $N(\lambda)$ is a $\U$-submodule of $M(\lambda)$. Define
\begin{gather*}
V(n,\e)=M(\lambda)/N(\lambda).
\end{gather*}
By construction the $\U$-module $V(n,\e)$ has the $\F$-basis
\begin{gather*}
v^{(n,\e)}_i=m^{(\lambda)}_i+N(\lambda)
\qquad
(0\leq i\leq n).
\end{gather*}
The action of $K$, $F$, $E$ on $V(n,\e)$ with respect to the $\F$-basis $\{v_i^{(n,\e)}\}_{i=0}^n$ of $V(n,\e)$ is as follows:
\begin{eqnarray*}
K v_i^{(n,\e)} &=& \e q^{n-2i} v_i^{(n,\e)} \qquad  (0\leq i\leq n),
\\
F v_i^{(n,\e)} &=& [i+1] v_{i+1}^{(n,\e)} \qquad  (0\leq i\leq n-1),
\qquad
F v_n^{(n,\e)} = 0,
\\
E v_i^{(n,\e)} &=& \e [n-i+1] v_{i-1}^{(n,\e)} \qquad  (1\leq i\leq n),
\qquad
E v_0^{(n,\e)} =0.
\end{eqnarray*}

\begin{lem}\label{lem:Casimir}
Let $n\in \N$ and $\e\in \{\pm 1\}$. Then the element $\Lambda$ acts on the $\U$-module $V(n,\e)$ as scalar multiplication by
$\e(q^{n+1}+q^{-n-1})$.
\end{lem}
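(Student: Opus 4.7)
The plan is to deduce this directly from Lemma \ref{lem:CasVerma} together with the quotient construction of $V(n,\e)$. Set $\lambda = \e q^n$. Then by the definition given just above, $V(n,\e) = M(\lambda)/N(\lambda)$, so every $\U$-module homomorphism (in particular, left multiplication by the central element $\Lambda$) on $M(\lambda)$ descends to $V(n,\e)$, and scalar actions descend to scalar actions.

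Since $\Lambda$ is central in $\U$, its action on the Verma module $M(\lambda)$ is by scalar multiplication, and Lemma \ref{lem:CasVerma} identifies that scalar as $\lambda q + \lambda^{-1} q^{-1}$. Passing to the quotient $V(n,\e)$, the element $\Lambda$ acts as scalar multiplication by the same value. Substituting $\lambda = \e q^n$ and using $\e^{-1} = \e$ (since $\e \in \{\pm 1\}$) gives
\[
\lambda q + \lambda^{-1} q^{-1} = \e q^{n+1} + \e^{-1} q^{-n-1} = \e (q^{n+1} + q^{-n-1}),
\]
which is the claimed scalar.

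There is essentially no obstacle here, since the computation is a one-line specialization of Lemma \ref{lem:CasVerma}. As a sanity check, one could instead verify the claim directly by applying the explicit formula $\Lambda = (q-q^{-1})^2 FE + qK + q^{-1}K^{-1}$ from (\ref{e:Lambda}) to the basis vector $v_0^{(n,\e)}$: we have $E v_0^{(n,\e)} = 0$ and $K v_0^{(n,\e)} = \e q^n v_0^{(n,\e)}$, which immediately yields $\Lambda v_0^{(n,\e)} = \e(q^{n+1} + q^{-n-1}) v_0^{(n,\e)}$, and centrality of $\Lambda$ together with the fact that $V(n,\e)$ is generated by $v_0^{(n,\e)}$ forces the same scalar on every basis vector.
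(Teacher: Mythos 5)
Your proof is correct and matches the paper's approach: the paper's own justification is simply ``Immediate from Lemma \ref{lem:CasVerma},'' i.e.\ the scalar $\lambda q+\lambda^{-1}q^{-1}$ on $M(\lambda)$ descends to the quotient $V(n,\e)=M(\lambda)/N(\lambda)$ and specializes to $\e(q^{n+1}+q^{-n-1})$ at $\lambda=\e q^n$, exactly as you argue. The direct verification on $v_0^{(n,\e)}$ you add is a fine independent check but not needed.
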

\begin{proof}
Immediate from Lemma \ref{lem:CasVerma}.
\end{proof}

A necessary and sufficient condition for $V(n,\e)$ to be irreducible is given below.

\begin{lem}\label{lem:irrVn}
For all $n\in \N$ and $\e\in \{\pm 1\}$ the following {\rm (i)}, {\rm (ii)} are equivalent:
\begin{enumerate}
\item The $\U$-module $V(n,\e)$ is irreducible.

\item $q^{2i}\not=1$ for all $1\leq i\leq n$.
\end{enumerate}
\end{lem}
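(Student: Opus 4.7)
The plan is to prove the two directions using the weight-space decomposition afforded by $K$, combined with the exact locations where the structure constants $[i+1]$ and $[n-i+1]$ vanish.

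First I would record the basic observation that $K$ acts on $V(n,\varepsilon)$ diagonally with eigenvalues $\varepsilon q^{n-2i}$ on $v_i^{(n,\varepsilon)}$ for $0\le i\le n$. Two such eigenvalues coincide exactly when $q^{2(j-i)}=1$ for some $0\le i<j\le n$, i.e.\ exactly when $q^{2k}=1$ for some $1\le k\le n$. So condition (ii) is equivalent to the $K$-eigenvalues being pairwise distinct.

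For (ii)$\Rightarrow$(i), I would assume $q^{2i}\ne 1$ for all $1\le i\le n$. Then the $n+1$ weight spaces $\F v_i^{(n,\varepsilon)}$ are distinct, so any $\U$-submodule $W$ is a sum of weight spaces, hence $W=\mathrm{span}\{v_i^{(n,\varepsilon)}:i\in S\}$ for some $S\subseteq\{0,1,\ldots,n\}$. Under the hypothesis, $[i+1]\ne 0$ for $0\le i\le n-1$ and $[n-i+1]\ne 0$ for $1\le i\le n$. Thus the formulas for $F$ and $E$ show that $S$ is closed under $i\mapsto i+1$ (from $F$-invariance) and under $i\mapsto i-1$ (from $E$-invariance). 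Consequently $S=\emptyset$ or $S=\{0,1,\ldots,n\}$, proving irreducibility.

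For the contrapositive of (i)$\Rightarrow$(ii), suppose $q^{2k}=1$ for some $1\le k\le n$. Set $j=n-k+1$, so $1\le j\le n$ and $[n-j+1]=[k]=0$. Let
\[
W=\mathrm{span}_{\F}\bigl\{v_j^{(n,\varepsilon)},v_{j+1}^{(n,\varepsilon)},\ldots,v_n^{(n,\varepsilon)}\bigr\}.
\]
Then $W$ is clearly $K$-invariant and $F$-invariant (since $F$ raises the index or annihilates $v_n^{(n,\varepsilon)}$). For $E$-invariance, note that $Ev_j^{(n,\varepsilon)}=\varepsilon[n-j+1]v_{j-1}^{(n,\varepsilon)}=0\in W$, and for $l>j$ one has $Ev_l^{(n,\varepsilon)}=\varepsilon[n-l+1]v_{l-1}^{(n,\varepsilon)}$ with $l-1\ge j$, so it lies in $W$. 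Hence $W$ is a nonzero proper $\U$-submodule of $V(n,\varepsilon)$, showing reducibility.

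The main obstacle — really the only nontrivial step — is justifying that any submodule is spanned by a subset of $\{v_0^{(n,\varepsilon)},\ldots,v_n^{(n,\varepsilon)}\}$ in the (ii)$\Rightarrow$(i) direction; this is exactly where the distinctness of the $K$-eigenvalues under (ii) is used, and once that is in hand the rest is a direct reading of the action formulas.
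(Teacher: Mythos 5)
Your proof is correct, and it diverges from the paper's argument in the direction (ii) $\Rightarrow$ (i). The paper's reducibility witness for (i) $\Rightarrow$ (ii) is the \emph{bottom} subspace $\mathrm{span}\{v_0^{(n,\varepsilon)},\ldots,v_{i-1}^{(n,\varepsilon)}\}$, which is $E$- and $K$-invariant and is killed into itself by $F$ because $[i]=0$; you use the complementary \emph{top} subspace $\mathrm{span}\{v_j^{(n,\varepsilon)},\ldots,v_n^{(n,\varepsilon)}\}$ with $[n-j+1]=0$. That difference is cosmetic. The real divergence is in proving irreducibility: the paper takes an arbitrary nonzero $v$ in a submodule, applies $E^k$ for $k$ the largest index with nonzero coefficient to land on a nonzero multiple of $v_0^{(n,\varepsilon)}$, and then applies powers of $F$ to recover the whole basis; it never invokes the $K$-weight decomposition. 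You instead use the distinctness of the $K$-eigenvalues $\varepsilon q^{n-2i}$ (which is exactly condition (ii)) to conclude that every submodule is spanned by a subset of the basis, and then observe that the nonvanishing of $[i+1]$ and $[n-i+1]$ forces that subset to be empty or everything. Your route makes the role of hypothesis (ii) more transparent (it is literally the simplicity of the $K$-spectrum) and avoids tracking coefficients of a generic vector, at the cost of needing the standard linear-algebra fact that a subspace invariant under a diagonalizable operator with simple spectrum is a sum of eigenlines --- which you correctly identify and justify as the one nontrivial step. Both arguments are complete and of comparable length.
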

\begin{proof}
(i) $\Rightarrow$ (ii): Suppose on the contrary that there exists $1\leq i\leq n$ such that $q^{2i}=1$. Clearly the $\F$-subspace $V$ of $V(n,\e)$ spanned by
$$
v^{(n,\e)}_k
\qquad
\hbox{for all $0\leq k\leq i-1$}
$$
is invariant under $E$ and $K$. Since $[i]=0$ the $\F$-vector space $V$ is invariant under $F$. Therefore $V$ is a nonzero $\U$-submodule of $V(n,\e)$, a contradiction to (i).

(ii) $\Rightarrow$ (i): To see the irreducibility of $V(n,\e)$, we let $V$ denote any nonzero $\U$-submodule of $V(n,\e)$ and show that $V=V(n,\e)$. Choose a nonzero vector $v\in V$. For $0\leq i\leq n$ let $c_i$ denote the coefficient of $v^{(n,\e)}_i$ in $v$ with respect to the $\F$-basis $\{v_i^{(n,\e)}\}_{i=0}^n$ of $V(n,\e)$. Choose $k=\max\{i\,|\,c_i\not=0\}$. Observe that
$$
E^{k} v
=
\e^k c_k
[n][n-1]\cdots [n-k+1] 
v_0^{(n,\e)}.
$$
Since $V$ is a $\U$-module the vector $E^k v\in V$.
By (ii) the scalars $[i]\not=0$ for all $1\leq i\leq n$. Hence $v_0^{(n,\e)}\in V$. Observe that 
$$
F^i v_0^{(n,\e)}=
[1][2]\cdots [i]v_i^{(n,\e)}
\qquad 
(1\leq i\leq n).
$$
Hence $v_i^{(n,\e)}\in V$ for all $1\leq i\leq n$. Therefore $V=V(n,\e)$.
\end{proof}

For all $n\in \N$ it is known that any $(n+1)$-dimensional irreducible $\U$-module is isomorphic to $V(n,1)$ or $V(n,-1)$ provided that $\F$ is algebraically closed and $q$ is not a root of unity \cite[Theorem 2.6]{jantzen}.

\section{The universal Askey-Wilson algebra $\triangle_q$ and its modules}\label{s:AW}

\subsection{The universal Askey-Wilson algebra}

The {\it universal Askey--Wilson algebra} $\triangle_q$ is an $\F$-algebra generated by $A$, $B$, $C$ and the relations assert that each of
\begin{gather*}
A+
\frac{qBC-q^{-1}CB}{q^2-q^{-2}},
\qquad
B+
\frac{qCA-q^{-1}AC}{q^2-q^{-2}},
\qquad
C+
\frac{qAB-q^{-1}BA}{q^2-q^{-2}}
\end{gather*}
commutes with $A$, $B$, $C$ \cite[Definition 1.2]{uaw2011}. Recall from \S\ref{s:intro} that the {\it Askey-Wilson algebra} is an $\F$-algebra generated by $K_0$, $K_1$, $K_2$ subject to the relations of the forms (\ref{e:AW1})--(\ref{e:AW3}). Recall that (\ref{e:AW1})--(\ref{e:AW3}) involve five additional parameters $\varrho$, $\varrho^*$, $\eta$, $\eta^*$, $\omega$.  It is straightforward to verify that $\triangle_q$ satisfies the following property: Under the mild constraints $\varrho\not=0$ and $\varrho^*\not=0$ there exists a unique surjective homomorphism from $\triangle_q$ into the Askey-Wilson algebra that sends
\begin{gather*}
A\mapsto -\frac{q^2-q^{-2}}{\sqrt{\varrho^*}} K_0,\qquad
B\mapsto  -\frac{q^2-q^{-2}}{\sqrt \varrho} K_1, \qquad
C\mapsto \frac{q+q^{-1}}{\sqrt{\varrho\varrho^*}}\omega
-\frac{q^2-q^{-2}}{\sqrt{\varrho\varrho^*}}K_2.
\end{gather*}

Let $\alpha$, $\beta$, $\gamma$ denote the elements of $\triangle_q$ defined by
\begin{eqnarray*}
\frac{\alpha}{q+q^{-1}}
&=&
A+\frac{qBC-q^{-1}CB}{q^2-q^{-2}},
\\
\frac{\beta}{q+q^{-1}}
&=&
B+
\frac{qCA-q^{-1}AC}{q^2-q^{-2}},
\\
\frac{\gamma}{q+q^{-1}}
&=&
C+
\frac{qAB-q^{-1}BA}{q^2-q^{-2}}.
\end{eqnarray*}
By the definition of $\triangle_q$ the elements $\alpha$, $\beta$, $\gamma$ are central in $\triangle_q$. Recall from \cite[Theorem~4.1]{uaw2011} that

\begin{lem}\label{lem:PBWaw}
The elements
\begin{gather*}
A^{i} B^{j} C^{k}
\alpha^{r}
\beta^{s}
\gamma^{t}
\qquad
\hbox{for all $i,j,k,r,s,t\in \N$}
\end{gather*}
are an $\F$-basis of $\triangle_q$.
\end{lem}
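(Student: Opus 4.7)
The plan is to establish this PBW-type statement in two steps: spanning first, then linear independence. Since $\alpha$, $\beta$, $\gamma$ are central in $\triangle_q$ by construction, any monomial in the six generators may be reordered so that all occurrences of $\alpha$, $\beta$, $\gamma$ appear on the right. It therefore suffices to show that every word in $A$, $B$, $C$ can be expressed as an $\F$-linear combination of monomials of the form $A^i B^j C^k$ multiplied by a polynomial in the central generators.

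For spanning, I would solve each of the three defining relations for the ``misordered'' patterns $BA$, $CA$, $CB$, obtaining the reduction rules
\begin{align*}
BA &= q^2 AB - q(q-q^{-1})\gamma + q(q^2-q^{-2})C, \\
CA &= q^{-2} AC + q^{-1}(q-q^{-1})\beta - q^{-1}(q^2-q^{-2})B, \\
CB &= q^2 BC - q(q-q^{-1})\alpha + q(q^2-q^{-2})A.
\end{align*}
Grading $\triangle_q$ by total degree in $A, B, C$ (with $\alpha, \beta, \gamma$ placed in degree zero), each rule decreases the number of inversions of the ordering $A<B<C$ within a monomial at the cost of producing strictly lower-degree terms that are handled by the outer inductive hypothesis. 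A double induction, outer on total degree and inner on inversion count, then shows that every element of $\triangle_q$ lies in the $\F$-span of the claimed monomials.

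For linear independence I would apply Bergman's Diamond Lemma to the three reductions above, together with the trivial rules expressing that $\alpha$, $\beta$, $\gamma$ commute with $A$, $B$, $C$ and with each other. The only nontrivial overlap ambiguity is the word $CBA$, which can be simplified either by first rewriting $CB$ and iterating on $BCA$, or by first rewriting $BA$ and iterating on $CAB$. Confluence --- that the two reduction sequences terminate at the same normal-form expression --- is a finite but delicate verification, because one must track every degree-$0$ and degree-$1$ term (including cross terms mixing the central generators with $A$, $B$, $C$) produced along either route. Once confluence is checked, the Diamond Lemma guarantees that the normal-form monomials $A^i B^j C^k \alpha^r \beta^s \gamma^t$ form an $\F$-basis.

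The main obstacle is this confluence check for the $CBA$ ambiguity: it is mechanical but long, and any miscalculation in the coefficients propagates into several cross terms. An appealing alternative would be to deduce linear independence from a faithful representation, and the injection $\flat$ into $\U\otimes \U\otimes \U$ developed later in this paper is a natural candidate; however, invoking $\flat$ here would be circular with respect to the paper's logical flow, so the Diamond Lemma route appears to be the cleaner option.
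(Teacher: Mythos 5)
Your proposal cannot be compared against a proof in the paper, because the paper gives none: Lemma~\ref{lem:PBWaw} is simply quoted from \cite[Theorem~4.1]{uaw2011}, and the Diamond Lemma argument you outline is essentially the argument of that reference. Your preliminary steps are sound: the three reduction rules for $BA$, $CA$, $CB$ are correctly solved out of the defining relations; the identification of $CBA$ as the only nontrivial overlap ambiguity is right (all ambiguities involving $\alpha$, $\beta$, $\gamma$ resolve trivially by centrality); and you are right that invoking $\flat$ for faithfulness would be circular, since Theorem~\ref{thm:injective} already presupposes this basis.

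The one substantive step, however, is exactly the one you defer: the confluence check at $CBA$ is where the entire content of linear independence lives, and a proof that announces it without performing it is not yet a proof. For the record, it does go through, and less painfully than you fear: reducing $(CB)A$ via $BCA\to BAC\to ABC$ and reducing $C(BA)$ via $CAB\to ACB\to ABC$ both terminate at
\begin{gather*}
q^2 ABC+q(q^2-q^{-2})\bigl(A^2-B^2+C^2\bigr)-q(q-q^{-1})\bigl(A\alpha-B\beta+C\gamma\bigr),
\end{gather*}
so the ambiguity is resolvable. Two smaller points need tightening. First, the grading you propose for the spanning induction (total degree in $A,B,C$ with $\alpha,\beta,\gamma$ in degree zero) is not a well-founded semigroup order compatible with the reduction system, since the rule $BA\to q^2AB+\cdots$ does not lower that degree; you should instead use degree-lexicographic order on words in all six generators with $A<B<C<\alpha<\beta<\gamma$, under which each right-hand side is strictly smaller than its left-hand side, and then the Diamond Lemma delivers spanning and independence simultaneously, making your separate spanning step redundant. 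Second, you should say a word justifying that the six-generator presentation (defining equations for $\alpha,\beta,\gamma$ plus their centrality) presents the same algebra as the three-generator presentation of $\triangle_q$; this is routine but is the hypothesis under which the rewriting system computes $\triangle_q$ rather than some quotient or extension of it.
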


Observe that $C$ is an $\F$-linear combination of $AB$, $BA$, $\gamma$. Therefore $A$, $B$, $\gamma$ form a set of generators of $\triangle_q$.  A presentation for $\triangle_q$ with respect to the generators is given in \cite[Theorem~2.2]{uaw2011}:

\begin{lem}\label{lem:ABc}
The $\F$-algebra $\triangle_q$ is generated by  $A$, $B$, $\gamma$ subject to the relations
\begin{gather*}
A^3B-[3]A^2BA+[3]ABA^2-BA^3
=
(q^2-q^{-2})^2(BA-AB),
\\
B^3A-[3]B^2AB+[3]BAB^2-AB^3
=
(q^2-q^{-2})^2(AB-BA),
\\
B^2A^2-(q^2+q^{-2})BABA
+(q^2+q^{-2})ABAB-A^2B^2
=
(q-q^{-1})^2(AB-BA)\gamma,
\\
\gamma A=A\gamma,
\qquad
\gamma B=B\gamma.
\end{gather*}
\end{lem}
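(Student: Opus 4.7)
The proof has two parts: verifying that the listed relations hold in $\triangle_q$, and showing that together with the generators $A,B,\gamma$ they already determine $\triangle_q$. Let $\mathfrak{A}$ denote the $\F$-algebra with the presentation asserted by the lemma. Since $C=\gamma/(q+q^{-1})-(qAB-q^{-1}BA)/(q^2-q^{-2})$ inside $\triangle_q$, the elements $A$, $B$, $\gamma$ generate $\triangle_q$, so a well-defined homomorphism $\mathfrak{A}\to\triangle_q$ would automatically be surjective.

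For the first part, the relations $\gamma A=A\gamma$ and $\gamma B=B\gamma$ are immediate, since $\gamma$ is central in $\triangle_q$ by definition. For the three polynomial relations I would solve the defining equation for $\gamma$ to express
\[
C=\frac{\gamma}{q+q^{-1}}-\frac{qAB-q^{-1}BA}{q^2-q^{-2}},
\]
and then compute the commutators $[\alpha,B]$, $[\beta,A]$, and $[\alpha,A]$ inside $\triangle_q$. Since $\alpha$ and $\beta$ are central these commutators vanish. Substituting the formula for $C$ into each identity eliminates the unwanted $C$'s; the $\gamma$-contributions collapse (using $\gamma A=A\gamma$ and $\gamma B=B\gamma$) to $0$ in the two cubic cases and to $(q-q^{-1})^2(AB-BA)\gamma$ in the quartic case, while the $(qAB-q^{-1}BA)$-contributions expand into the explicit $[3]$-weighted cubic polynomials and the $(q^2+q^{-2})$-weighted quartic polynomial listed. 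This establishes the four relations in $\triangle_q$ and produces the surjective homomorphism $\mathfrak{A}\to\triangle_q$.

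For the second part, I would construct an inverse. Inside $\mathfrak{A}$ define $C$ by the displayed formula above and then define
\[
\alpha=(q+q^{-1})A+\frac{qBC-q^{-1}CB}{q-q^{-1}},\qquad
\beta=(q+q^{-1})B+\frac{qCA-q^{-1}AC}{q-q^{-1}}.
\]
Using the four relations, the goal is to verify that $\alpha,\beta,\gamma$ each commute with $A$, $B$, and $C$. The commutations with $\gamma$ are immediate: $\gamma$ commutes with $A$ and $B$ by assumption, and with $C$ because $C$ lies in the subalgebra they generate. The commutation of $\alpha$ with $B$ (resp.\ of $\beta$ with $A$) is equivalent, after substituting the expression for $C$, to the second (resp.\ first) cubic relation, and both $[\alpha,A]=0$ and $[\beta,B]=0$ reduce to the quartic relation. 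Finally $[\alpha,C]=0$ and $[\beta,C]=0$ follow from these, because $C\in\F[A,B,\gamma]$ and $\alpha,\beta$ have already been shown to commute with each of $A,B,\gamma$. This produces the inverse homomorphism $\triangle_q\to\mathfrak{A}$ and completes the proof.

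The main obstacle is the bookkeeping in the first part: identifying which centrality condition corresponds to which of the four listed relations, and carrying out the substitution of $C$ cleanly without scalar errors. Once the pairings $[\alpha,B]\leftrightarrow$ second cubic, $[\beta,A]\leftrightarrow$ first cubic, $[\alpha,A]\leftrightarrow$ quartic are recognised, the rest is a systematic expansion using the identities $q^2-q^{-2}=(q-q^{-1})(q+q^{-1})$ and $[3]=q^2+1+q^{-2}$ to collect coefficients.
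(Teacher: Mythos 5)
Your argument is correct. Note that the paper itself offers no proof of this lemma: it simply recalls the presentation from \cite[Theorem~2.2]{uaw2011}, so there is no in-paper argument to compare against; your two-way-homomorphism proof is essentially the standard one from that source. The key identifications all check out. Writing $C=\frac{\gamma}{q+q^{-1}}-\frac{qAB-q^{-1}BA}{q^2-q^{-2}}$ one finds
$qBC-q^{-1}CB=\frac{(q-q^{-1})B\gamma}{q+q^{-1}}-\frac{(q^2+q^{-2})BAB-B^2A-AB^2}{q^2-q^{-2}}$,
and commuting this with $B$ yields $B^3A-[3]B^2AB+[3]BAB^2-AB^3$ (the second cubic), while commuting it with $A$ yields the quartic with the scalar $(q-q^{-1})^2$ arising from $\frac{(q-q^{-1})(q^2-q^{-2})}{q+q^{-1}}$; the analogous computation with $qCA-q^{-1}AC$ gives the first cubic against $A$ and the quartic again against $B$. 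So the pairings you list, including the fact that both $[\alpha,A]=0$ and $[\beta,B]=0$ collapse to the single quartic relation, are exactly right. Two points worth making explicit in a write-up: first, in $\mathfrak{A}$ the element $\gamma$ is genuinely central (it commutes with the generators $A$, $B$, $\gamma$), which is what lets you discard the $\gamma$-terms from the cubic commutators and gives $[\alpha,\gamma]=[\beta,\gamma]=0$ for free; second, after constructing the two homomorphisms you should check that both composites fix the generators --- the composite on $\triangle_q$ sends $C$ to $\frac{\gamma}{q+q^{-1}}-\frac{qAB-q^{-1}BA}{q^2-q^{-2}}$, which equals $C$ by the definition of $\gamma$, and the composite on $\mathfrak{A}$ sends $\gamma$ back to $\gamma$ by the definition of $C$ there --- since this is what upgrades a pair of surjections to an isomorphism.
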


By \cite[Theorem 6.2]{uaw2011} the element
\begin{gather}\label{e:Casimirtriangle}
\Omega=q ABC+q^2 A^2+q^{-2}B^2+q^2C^2-qA\alpha-q^{-1}B\beta-qC\gamma
\end{gather}
is central in $\triangle_q$. The central element is called the {\it Casimir element} of $\triangle_q$ \cite{hidden_sym,uaw2011}.

\subsection{Verma $\triangle_q$-modules and finite-dimensional $\triangle_q$-modules}\label{s:AWmodule}
Throughout this subsection, let $a$, $b$, $c$, $\nu$ denote any four nonzero scalars in $\F$. Set
\begin{eqnarray*}
\theta_i
&=&
aq^{2i} \nu^{-1}+a^{-1}q^{-2i} \nu
\qquad
\hbox{for all $i\in \Z$},
\\
\theta_i^*
&=&
bq^{2i} \nu^{-1}+b^{-1}q^{-2i} \nu
\qquad
\hbox{for all $i\in \Z$},
\\
\varphi_i
&=&
a^{-1} b^{-1} q \nu
(q^i-q^{-i})
(q^{i-1} \nu^{-1}- q^{1-i} \nu)
\\
&&\quad \times \; (q^{-i}-abc q^{i-1}  \nu^{-1})
(q^{-i}-abc^{-1} q^{i-1} \nu^{-1} )
\qquad
\hbox{for all $i\in \Z$},
\\
\omega
&=&
(b+b^{-1})
(c+c^{-1})
+
(a+a^{-1})
(q\nu + q^{-1}\nu^{-1}),
\\
\omega^*
&=&
(c+c^{-1})
(a+a^{-1})
+
(b+b^{-1})
(q\nu + q^{-1}\nu^{-1}),
\\
\omega^{\varepsilon}
&=&
(a+a^{-1})
(b+b^{-1})
+
(c+c^{-1})
(q\nu + q^{-1}\nu^{-1}).
\end{eqnarray*}

Recall from \cite[\S3]{Huang:2015} that there exists a unique $\triangle_q$-module $M_\nu(a,b,c)$ with an $\F$-basis $\{m_i\}_{i\in \N}$ such that
\begin{align*}
(A-\theta_i) m_i&=
m_{i+1}
\qquad
\hbox{for all $i\in \N$},
\\
(B-\theta_i^*) m_i&=
\varphi_i m_{i-1}
\qquad
\hbox{for all $i\in \N^*$},
\qquad
(B-\theta_0^*) m_0=0,
\end{align*}
and
\begin{align*}
\left(
C
-\frac{\omega^\e-\theta_i\theta_i^*}{q+q^{-1}}
\right)
m_i&=
\varphi_i \frac{q^{-1}\theta_i-q\theta_{i-1}}
{q^2-q^{-2}}
m_{i-1}
+
\frac{q^{-1}\varphi_{i+1}-q\varphi_i}{q^2-q^{-2}}
m_i
\\
&\qquad+\,
\frac{q^{-1}\theta_{i+1}^*-q\theta_i^*}
{q^2-q^{-2}}m_{i+1}
\qquad
\hbox{for all $i\in \N^*$},
\\
\left(
C
-\frac{\omega^\e-\theta_0\theta_0^*}{q+q^{-1}}
\right)
m_0&=
\frac{q^{-1}\varphi_1}{q^2-q^{-2}}
m_0
+
\frac{q^{-1}\theta_1^*-q\theta_0^*}
{q^2-q^{-2}}m_1.
\end{align*}
The central elements $\alpha$, $\beta$, $\gamma$ act on $M_\nu(a,b,c)$ as scalar multiplications by $\omega$, $\omega^*$, $\omega^\e$ respectively. The $\triangle_q$-module $M_\nu(a,b,c)$ is called the {\it Verma $\triangle_q$-module}. By \cite[Proposition 3.2]{Huang:2015} the Verma $\triangle_q$-module $M_\nu(a,b,c)$ satisfies the following universal property:

\begin{prop}\label{prop:UP}
If a $\triangle_q$-module $V$ contains a vector $v$ such that
\begin{gather*}
B v=\theta_0^* v,
\\
(B-\theta_1^*)(A-\theta_0) v=\varphi_1 v,
\\
\alpha v=\omega v,
\qquad
\beta v=\omega^*v,
\qquad
\gamma v=\omega^\e v,
\end{gather*}
then there exists a unique $\triangle_q$-module homomorphism $M_\nu(a,b,c)\to V$ that sends $m_0$ to $v$.
\end{prop}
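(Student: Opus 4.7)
My plan is to treat uniqueness and existence separately. For uniqueness, iterating the defining relation $(A-\theta_i)m_i=m_{i+1}$ yields $m_i=(A-\theta_{i-1})(A-\theta_{i-2})\cdots(A-\theta_0)m_0$ for every $i\geq 1$, so any $\triangle_q$-homomorphism $\phi\colon M_\nu(a,b,c)\to V$ with $\phi(m_0)=v$ is forced to send $m_i$ to $v_i:=(A-\theta_{i-1})(A-\theta_{i-2})\cdots(A-\theta_0)v\in V$. Since $\{m_i\}_{i\in\N}$ is an $\F$-basis of $M_\nu(a,b,c)$, this determines $\phi$ uniquely.

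For existence, define $\phi$ to be the $\F$-linear map sending $m_i\mapsto v_i$. By Lemma \ref{lem:ABc}, $\triangle_q$ is generated by $A$, $B$, $\gamma$, so it suffices to verify that $\phi$ intertwines the action of these three elements. The $A$-compatibility is tautological: $v_{i+1}=(A-\theta_i)v_i$ gives $Av_i=\theta_iv_i+v_{i+1}$, matching $Am_i$. The $\gamma$-compatibility follows because $\gamma$ is central in $\triangle_q$, hence commutes with each factor $A-\theta_j$; combined with the hypothesis $\gamma v=\omega^\e v$, this yields $\gamma v_i=\omega^\e v_i$ for all $i$, agreeing with the scalar action on $m_i$.

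The core of the argument is the pair of identities $Bv_i=\theta_i^*v_i+\varphi_iv_{i-1}$ together with an analogous three-term expansion of $Cv_i$ in $v_{i-1},v_i,v_{i+1}$, for all $i\geq 0$ (with the convention $v_{-1}=0$ and $\varphi_0=0$). The base case $i=0$ of the $B$-identity is the hypothesis $Bv=\theta_0^*v$, and the case $i=1$ follows from $(B-\theta_1^*)(A-\theta_0)v=\varphi_1v$; the base case of the $C$-identity at $i=0$ is obtained by rewriting $C=\frac{\gamma}{q+q^{-1}}-\frac{qAB-q^{-1}BA}{q^2-q^{-2}}$ via the third defining relation and then applying the already-known actions of $A,B,\gamma$ on $v_0$ and $v_1$. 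For the inductive step I would propagate both identities to index $i+1$ simultaneously: rewrite $BA=q^2AB-q(q-q^{-1})\gamma+q(q^2-q^{-2})C$ and $AC=q^2CA-q(q-q^{-1})\beta+q(q^2-q^{-2})B$ (derived from the third and second defining relations, respectively), apply to $v_i$, and use the inductive hypothesis to express the right-hand sides in the $v_j$-basis, then collect coefficients.

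The main obstacle is not conceptual but computational: the inductive step reduces to a collection of polynomial identities in $a,b,c,\nu$, and $q^{\pm i}$ that must be verified by direct substitution of the explicit definitions of $\theta_j,\theta_j^*,\varphi_j,\omega,\omega^*,\omega^\e$. This bookkeeping is lengthy but purely formal, and once the $B$- and $C$-actions on the $v_i$ obey the Verma formulas, the fact that $\phi$ is a $\triangle_q$-module homomorphism follows immediately from Lemma \ref{lem:ABc}.
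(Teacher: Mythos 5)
The paper itself offers no proof of this proposition; it is imported verbatim from \cite[Proposition 3.2]{Huang:2015}, so there is no internal argument to compare your attempt against. On its own merits your plan is sound: uniqueness via $m_i=(A-\theta_{i-1})\cdots(A-\theta_0)m_0$ is immediate, existence correctly reduces to checking the intertwining property on the generators $A$, $B$, $\gamma$ of Lemma \ref{lem:ABc} (with the $C$-expansion carried along only as an auxiliary identity needed to move $B$ past $A$), and the two $q$-commutator rewritings you quote are exactly the consequences of the centrality of $\gamma$ and $\beta$. Two remarks. First, make the schedule of the interleaved induction explicit: the step producing $Bv_{i+1}$ consumes $Cv_i$, and the step producing $Cv_{i+1}$ consumes $Bv_i$ and $Cv_i$, so the order is $Bv_0,Bv_1,Cv_0\Rightarrow Cv_1\Rightarrow Bv_2\Rightarrow Cv_2\Rightarrow\cdots$; also note $\varphi_0=0$, so your convention $Bv_0=\theta_0^*v_0+\varphi_0v_{-1}$ is consistent with the hypothesis. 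Second, the ``lengthy but purely formal'' coefficient check can be bypassed entirely: each step of your computation uses only the defining relations of $\triangle_q$, the inductive hypotheses, and the scalar action of $\alpha$, $\beta$, $\gamma$, all of which hold verbatim in $M_\nu(a,b,c)$ itself with $v=m_0$. Since the $m_i$ are linearly independent there, the scalars your recursion produces are forced to coincide with $\theta_{i+1}^*$, $\varphi_{i+1}$ and the Verma coefficients of $C$; the same universal computation in an arbitrary $V$ then yields the desired identities with no polynomial verification. With either the explicit check or this bootstrapping remark, your argument is complete.
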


Fix an integer $d\geq 0$ and set $\nu=q^d$. Let $N_\nu(a,b,c)$ denote the $\F$-subspace of $M_\nu(a,b,c)$ spanned by
\begin{gather*}
m_i
\qquad
\hbox{for all $i\geq d+1$}. 
\end{gather*}
Clearly $N_\nu(a,b,c)$ is invariant under $A$. Since $\varphi_{d+1}=0$ in the setting, the $\F$-vector space $N_\nu(a,b,c)$ is invariant under $B$ and $C$. Therefore $N_\nu(a,b,c)$
is a $\triangle_q$-submodule of $M_\nu(a,b,c)$. Define
$$
V_d(a,b,c)=M_\nu(a,b,c)/N_\nu(a,b,c).
$$
By construction the $\triangle_q$-module $V_d(a,b,c)$ has the $\F$-basis
$$
v_i=m_i+N_\nu(a,b,c)
\qquad
(0\leq i\leq d).
$$
The action of $A$, $B$, $C$ on $V_d(a,b,c)$ with respect to the $\F$-basis $\{v_i\}^d_{i=0}$ is as follows:

\begin{align*}
(A-\theta_i) v_i&=
v_{i+1}
\qquad
(0\leq i\leq d-1),
\qquad
(A-\theta_d) v_d= 0,
\\
(B-\theta_i^*) v_i&=
\varphi_i v_{i-1}
\qquad
(1\leq i\leq d),
\qquad
(B-\theta_0^*) v_0=0,
\end{align*}
and
\begin{align*}
\left(
C
-\frac{\omega^\e-\theta_i\theta_i^*}{q+q^{-1}}
\right)
v_i
&=
\varphi_i \frac{q^{-1}\theta_i-q\theta_{i-1}}
{q^2-q^{-2}}
v_{i-1}
+
\frac{q^{-1}\varphi_{i+1}-q\varphi_i}{q^2-q^{-2}}
v_i
\\
&\qquad+\,
\frac{q^{-1}\theta_{i+1}^*-q\theta_i^*}
{q^2-q^{-2}}v_{i+1}
\qquad (1\leq i\leq d-1),
\\
\left(
C
-\frac{\omega^\e-\theta_0\theta_0^*}{q+q^{-1}}
\right)
v_0
&=
\frac{q^{-1}\varphi_1}{q^2-q^{-2}}
v_0
+
\frac{q^{-1}\theta_1^*-q\theta_0^*}
{q^2-q^{-2}}v_1,
\\
\left(
C
-\frac{\omega^\e-\theta_d\theta_d^*}{q+q^{-1}}
\right)v_d
&=
\varphi_d \frac{q^{-1}\theta_d-q\theta_{d-1}}
{q^2-q^{-2}}
v_{d-1}
-
\frac{q\varphi_d}{q^2-q^{-2}}
v_d.
\end{align*}
The central elements $\alpha$, $\beta$, $\gamma$ act on $V_d(a,b,c)$ as scalar multiplications by $\omega$, $\omega^*$, $\omega^\e$ respectively. The $\triangle_q$-module $V_d(a,b,c)$ satisfies the following universal property:

\begin{prop}\label{prop:hom}
If a $\triangle_q$-module $V$ contains a vector $v$ such that
\begin{gather*}
Bv=\theta_0^* v,
\\
(B-\theta_1^*)(A-\theta_0)v=\varphi_1v,
\\
\alpha v=\omega v,
\qquad
\beta v=\omega^* v,
\qquad
\gamma v=\omega^\e v,
\end{gather*}
and the characteristic polynomial of $A$ on $V$ is $\prod\limits_{i=0}^d (X-\theta_i)$, 
then there exists a unique $\triangle_q$-module homomorphism $V_d(a,b,c)\to V$ that sends $v_0$ to $v$.
\end{prop}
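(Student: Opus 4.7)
The plan is to obtain the desired homomorphism from the Verma universal property (Proposition \ref{prop:UP}) and then show it descends to the quotient $V_d(a,b,c)=M_\nu(a,b,c)/N_\nu(a,b,c)$. Concretely, the conditions imposed on $v$ in Proposition \ref{prop:hom} include, verbatim, the conditions imposed on the cyclic vector in Proposition \ref{prop:UP}. Hence there is a unique $\triangle_q$-module homomorphism $\pi:M_\nu(a,b,c)\to V$ with $\pi(m_0)=v$, and the task is reduced to proving that $\pi$ vanishes on the submodule $N_\nu(a,b,c)$ spanned by the vectors $m_i$ for $i\geq d+1$; once this is done, $\pi$ factors uniquely through $V_d(a,b,c)$ to give a homomorphism sending $v_0$ to $v$.

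To kill $N_\nu(a,b,c)$, iterate the relation $m_{i+1}=(A-\theta_i)m_i$ that holds in $M_\nu(a,b,c)$. Applying $\pi$ and using induction one obtains
\begin{gather*}
\pi(m_i)=\prod_{j=0}^{i-1}(A-\theta_j)\cdot v\qquad(i\in\N),
\end{gather*}
where the product is ordered with the factor $(A-\theta_{i-1})$ on the outside. In particular,
\begin{gather*}
\pi(m_{d+1})=\prod_{j=0}^{d}(A-\theta_j)\cdot v.
\end{gather*}
Now the additional hypothesis that the characteristic polynomial of $A$ on $V$ equals $\prod_{i=0}^d(X-\theta_i)$ together with the Cayley--Hamilton theorem forces $\prod_{j=0}^d(A-\theta_j)$ to act as zero on all of $V$. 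Thus $\pi(m_{d+1})=0$, and then $\pi(m_i)=\prod_{j=d+1}^{i-1}(A-\theta_j)\cdot\pi(m_{d+1})=0$ for every $i\geq d+1$, so $N_\nu(a,b,c)\subseteq\ker\pi$ and $\pi$ descends to the required map $\bar\pi:V_d(a,b,c)\to V$.

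Uniqueness is immediate from the fact that $V_d(a,b,c)$ is generated as a $\triangle_q$-module by $v_0$: the defining action shows $v_i=(A-\theta_{i-1})(A-\theta_{i-2})\cdots(A-\theta_0)v_0$ for $0\leq i\leq d$, so any $\triangle_q$-module homomorphism out of $V_d(a,b,c)$ is determined by its value at $v_0$.

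I expect no serious obstacle; the only point to check carefully is the cleanness of the Cayley--Hamilton step, which is the substantive use of the extra characteristic-polynomial hypothesis that distinguishes this proposition from the Verma universal property stated in Proposition \ref{prop:UP}.
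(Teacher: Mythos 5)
Your argument is correct and follows essentially the same route as the paper: invoke Proposition \ref{prop:UP} to get the map out of the Verma module $M_\nu(a,b,c)$, then use the Cayley--Hamilton theorem together with the identity $m_i=(A-\theta_{i-1})\cdots(A-\theta_0)m_0$ to see that $N_\nu(a,b,c)$ lies in the kernel. The paper's proof is just a terser version of the same computation, so nothing further is needed.
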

\begin{proof}
By Proposition \ref{prop:UP} there exists a unique $\triangle_q$-module $\phi:M_\nu(a,b,c)\to V$ that sends $m_0$ to $v$. 
Observe that
$$
N_\nu(a,b,c)=\displaystyle{\prod_{i=0}^d(A-\theta_i) M_\nu(a,b,c)}.
$$
By the Cayley-Hamilton theorem $N_\nu(a,b,c)$ is contained in the kernel of $\phi$. The lemma follows.
\end{proof}

A necessary and sufficient condition for $V_d(a,b,c)$ to be irreducible is given below. For a proof please see \cite[Theorem~4.4]{Huang:2015}.

\begin{lem}\label{lem:irr} 
The $\triangle_q$-module $V_d(a,b,c)$ is irreducible if and only if the following {\rm (i)}, {\rm (ii)} hold:
\begin{enumerate}
\item $q^{2i}\not=1$ for all $1\leq i\leq d$.

\item $abc,$ $a^{-1}bc,$ $ab^{-1}c,$ $abc^{-1}\not\in \{q^{d+1-2i}\,|\,1\leq i\leq d\}$.
\end{enumerate}
\end{lem}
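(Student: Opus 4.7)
The plan is to identify irreducibility with the non-vanishing of the ``off-diagonal'' scalars $\varphi_i$ together with a suitable dual family $\varphi_i^*$, and then invoke the Leonard-pair framework of \cite{lp2001,lp&awrelation}.

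\emph{Necessity.} With $\nu=q^d$, the four linear factors of the displayed formula for $\varphi_i$ vanish exactly when $q^{2i}=1$, $q^{2(d-i+1)}=1$, $abc=q^{d+1-2i}$, or $abc^{-1}=q^{d+1-2i}$; as $i$ ranges over $\{1,\ldots,d\}$, these four loci cover condition (i) and the $abc,abc^{-1}$ halves of condition (ii). Whenever $\varphi_i=0$ for some $1\leq i\leq d$, direct inspection of the displayed action formulas shows that $W_i=\mathrm{span}(v_i,\ldots,v_d)$ is closed under $A$, $B$, and $C$: the only terms that could push $v_i$ out of $W_i$ are the $v_{i-1}$-coefficients in $Bv_i$ and in $Cv_i$, and both are proportional to $\varphi_i$. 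So $W_i$ is a proper nonzero $\triangle_q$-submodule. For the remaining parts of (ii), concerning $a^{-1}bc$ and $ab^{-1}c$, I re-run the argument in a basis that diagonalizes $B$: one verifies that in this dual basis $A$ acts in the same shift shape with dual off-diagonal scalars $\varphi_i^*$ governed precisely by $a^{-1}bc$ and $ab^{-1}c$, whose vanishing yields analogous proper submodules.

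\emph{Sufficiency.} Assume (i) and (ii). Then $\varphi_i\neq 0$ and $\varphi_i^*\neq 0$ for all $1\leq i\leq d$. These non-vanishings exactly correspond to the statement that $(A,B)$ acts on $V_d(a,b,c)$ as a Leonard pair in the sense of \cite{lp2001,lp&awrelation}: each operator is irreducibly tridiagonal in a basis diagonalizing the other. If $0\neq V\subseteq V_d(a,b,c)$ is a $\triangle_q$-submodule, then $V$ is in particular $A$- and $B$-invariant; irreducibly-tridiagonal form of $A$ in the $B$-eigenbasis then forces $V=V_d(a,b,c)$ by the standard invariant-subspace argument for irreducibly tridiagonal matrices (any nonempty proper subset of coordinate vectors fails to be $A$-invariant because a nonzero $\varphi_i^*$ couples adjacent coordinates).

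\emph{Main obstacle.} The technical crux is setting up the dual picture and matching parameters: explicitly constructing the $B$-eigenbasis, re-expressing $A$ via the given $C$-action formula in that basis, and identifying the dual scalars $\varphi_i^*$ so that they factor analogously to $\varphi_i$ with vanishing loci governed by $a^{-1}bc$ and $ab^{-1}c$. The change-of-basis computation is mechanical but lengthy, and it is the single step in which the remaining two conditions of (ii) genuinely enter. Once that identification is in place, both directions of the lemma follow by the same shift-form invariant-subspace argument applied to two dual presentations of the module.
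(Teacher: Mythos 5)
The paper does not prove this lemma; it cites \cite[Theorem~4.4]{Huang:2015}, so your proposal can only be judged on its own merits. Your necessity argument for condition (i) and for the $abc$, $abc^{-1}$ half of condition (ii) is correct: with $\nu=q^d$ the four factors of $\varphi_i$ vanish exactly on the loci you list, and when $\varphi_i=0$ the span of $v_i,\dots,v_d$ is indeed invariant under $A$, $B$, $C$ (the $v_{i-1}$-coefficients of $Bv_i$ and $Cv_i$ are both multiples of $\varphi_i$). The overall strategy --- encode irreducibility in the nonvanishing of $\{\varphi_i\}$ together with a dual family governed by $a^{-1}bc$ and $ab^{-1}c$ --- is also the right one and is essentially what \cite{Huang:2015} does.

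The genuine gap is your identification of the dual structure with a Leonard pair, i.e.\ with eigenbases. Conditions (i) and (ii) do \emph{not} imply that $A$ or $B$ is diagonalizable: in the basis $\{v_i\}$ the matrix of $A$ is lower bidiagonal with all subdiagonal entries equal to $1$, hence non-derogatory, so $A$ is diagonalizable only if the $\theta_i$ are mutually distinct, which fails whenever $a^2\in\{q^{2(d-i)}\,|\,1\leq i\leq 2d-1\}$ --- a situation fully compatible with (i) and (ii). This is exactly the content of Lemma~\ref{lem:lp} of the present paper: an irreducible $V_d(a,b,c)$ carries a Leonard pair $A,B$ only under the \emph{additional} hypothesis on $a^2,b^2$. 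Consequently your claim that nonvanishing of $\varphi_i,\varphi_i^*$ ``exactly corresponds'' to the Leonard pair property is false, the ``basis that diagonalizes $B$'' you invoke for the remaining half of necessity need not exist, and the sufficiency argument (``irreducibly tridiagonal form of $A$ in the $B$-eigenbasis'') collapses in the non-diagonalizable case. The repair is to replace eigenbases by the second \emph{split} basis, in which $B$ is bidiagonal with unit off-diagonal entries and $A$ is bidiagonal in the opposite direction with entries $\phi_i$ whose factors involve $a^{-1}bc$ and $ab^{-1}c$; sufficiency then requires an invariant-subspace argument valid for non-derogatory (not necessarily diagonalizable) operators, e.g.\ classifying the $A$-invariant subspaces as $p(A)V$ for divisors $p$ of $\prod_{i=0}^d(X-\theta_i)$ and using both $\varphi_i\neq 0$ and $\phi_i\neq 0$ to rule out $B$-invariance of the proper ones. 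As written, your proof establishes neither direction in full.
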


By \cite[Theorem~4.7]{Huang:2015}, for any $(d+1)$-dimensional irreducible $\triangle_q$-module $V$, there exist nonzero scalars $a,b,c\in \F$ satisfying Lemma \ref{lem:irr}(ii) such that $V$ is isomorphic to $V_d(a,b,c)$, provided that $\F$ is algebraically closed and $q$ is not a root of unity.

\subsection{Finite-dimensional irreducible $\triangle_q$-modules and Leonard pairs}

In this subsection we give a brief introduction to two famous families of finite-dimensional irreducible $\triangle_q$-modules studied in \cite{Huang:2012,Huang:2015}.

To describe these $\triangle_q$-modules, we begin with some terms. A matrix is said to be {\it tridiagonal} if all nonzero entries lie on either the diagonal, the subdiagonal, or the superdiagonal. A tridiagonal matrix is said to be {\it irreducible} if each entry on the subdiagonal and superdiagonal is nonzero. 
Recall from \cite{lp2001} that two linear transformations on a finite-dimensional vector space $V$ are called a {\it Leonard pair} whenever for each of the two transformations, there exists a basis of $V$ with respect to which the matrix representing that transformation is diagonal and the matrix representing the other transformation is irreducible tridiagonal. \cite[Theorem 5.2]{Huang:2015} gave the following necessary and sufficient conditions for any two of $A$, $B$, $C$ as a Leonard pair on a finite-dimensional irreducible $\triangle_q$-module.

\begin{lem}\label{lem:lp}
Assume that the $\triangle_q$-module $V_d(a,b,c)$ is irreducible. Then the following {\rm (i)}--{\rm(iii)} are equivalent:
\begin{enumerate}
\item $A$, $B$ {\rm (}resp. $B$, $C${\rm )}  {\rm (}resp. $C$, $A${\rm )} act on $V_d(a,b,c)$ as a Leonard pair.

\item $A$, $B$  {\rm (}resp. $B$, $C${\rm )}  {\rm (}resp. $C$, $A${\rm )} are diagonalizable on $V_d(a,b,c)$.

\item Neither of $a^2$, $b^2$  {\rm (}resp. $b^2$, $c^2${\rm )}  {\rm (}resp. $c^2$, $a^2${\rm )} is in the set $\{q^{2(d-i)}\,|\,1\leq i\leq 2d-1\}$.
\end{enumerate}
\end{lem}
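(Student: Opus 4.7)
The plan is to prove the three implications for the pair $(A, B)$ and then reduce the cases of $(B, C)$ and $(C, A)$ to it via a cyclic symmetry of $\triangle_q$. The implication (i) $\Rightarrow$ (ii) is immediate from the definition of a Leonard pair. For (ii) $\Leftrightarrow$ (iii), note that in the basis $\{v_i\}_{i=0}^{d}$ of $V_d(a,b,c)$, the displayed action of $A$ is upper bidiagonal with diagonal entries $\theta_0, \ldots, \theta_d$ and superdiagonal entries all equal to $1$. Iterating $(A - \theta_0), (A - \theta_1), \ldots$ on $v_0$ produces $v_1, v_2, \ldots, v_d, 0$, so the minimal polynomial of $A$ coincides with its characteristic polynomial $\prod_{i=0}^{d}(X - \theta_i)$. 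Thus $A$ is diagonalizable if and only if the scalars $\theta_i$ are pairwise distinct. Factoring
\[
\theta_i - \theta_j = (q^{2i} - q^{2j})\bigl(a q^{-d} - a^{-1} q^{d - 2i - 2j}\bigr),
\]
the first factor is nonzero for $i \ne j$ by Lemma~\ref{lem:irr}(i), and the second vanishes iff $a^2 = q^{2(d-i-j)}$. Since the sum $i + j$ runs through $\{1, 2, \ldots, 2d-1\}$ as $\{i, j\}$ ranges over the distinct pairs in $\{0, \ldots, d\}$, this distinctness is equivalent to the condition on $a^2$ in (iii). A parallel argument applies to $B$, whose matrix in $\{v_i\}$ is lower bidiagonal with diagonal $\theta_i^*$ and subdiagonal entries $\varphi_i$, the latter being nonzero for $1 \le i \le d$ under the irreducibility criterion of Lemma~\ref{lem:irr}.

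For (iii) $\Rightarrow$ (i), choose an eigenbasis $\{u_i\}$ of $A$ with $A u_i = \theta_i u_i$ and write $B u_i = \sum_j B_{ji} u_j$. Applying the first cubic relation of Lemma~\ref{lem:ABc},
\[
A^3 B - [3] A^2 B A + [3] A B A^2 - B A^3 = (q^2 - q^{-2})^2 (BA - AB),
\]
to $u_i$ and reading the coefficient of $u_j$ yields
\[
B_{ji} (\theta_j - \theta_i) \Bigl[ (\theta_j - q^2 \theta_i)(\theta_j - q^{-2} \theta_i) + (q^2 - q^{-2})^2 \Bigr] = 0.
\]
Substituting $\theta_k = a q^{2k-d} + a^{-1} q^{d-2k}$, the bracket factors as a product of terms of the form $q^{2(j-i \pm 1)} - 1$ and $a^2 - q^{2(d-i-j \pm 1)}$; under (iii) and Lemma~\ref{lem:irr}, all these factors are nonzero whenever $|i - j| \ge 2$, forcing $B_{ji} = 0$ in this range. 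Hence $B$ is tridiagonal with respect to $\{u_i\}$. If some off-diagonal entry $B_{i \pm 1, i}$ vanished, then the span of an initial (resp.\ terminal) segment of $\{u_i\}$ would be invariant under $A$, $B$, and the scalar $\gamma$; since $A$, $B$, $\gamma$ generate $\triangle_q$ by Lemma~\ref{lem:ABc}, this would yield a proper $\triangle_q$-submodule, contradicting irreducibility. Thus $B$ is irreducible tridiagonal in $\{u_i\}$, and a parallel argument using the second cubic relation of Lemma~\ref{lem:ABc} (applied in an eigenbasis of $B$) shows that $A$ is irreducible tridiagonal in some eigenbasis of $B$, completing the Leonard pair verification for $(A, B)$.

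For the remaining pairs, the simultaneous cyclic permutation $A \mapsto B \mapsto C \mapsto A$ and $\alpha \mapsto \beta \mapsto \gamma \mapsto \alpha$ preserves the defining relations of $\triangle_q$ and thus extends to an algebra automorphism $\sigma$ of $\triangle_q$. Pulling $V_d(a, b, c)$ back along $\sigma$ produces an irreducible $\triangle_q$-module in which $A$, $B$, $\alpha$, $\beta$, $\gamma$ act via the original $B$, $C$, $\beta$, $\gamma$, $\alpha$; its central characters match those of $V_d(b, c, a)$, and the universal property (Proposition~\ref{prop:hom}) identifies the twist with $V_d(b, c, a)$. The equivalence for $(A, B)$ on $V_d(b, c, a)$ thereby translates into the claimed equivalence for $(B, C)$ on $V_d(a, b, c)$; the pair $(C, A)$ is handled by $\sigma^2$. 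The main obstacle I anticipate is the verification in the step (iii) $\Rightarrow$ (i) that every factor appearing in the bracket is nonzero for $|i - j| \ge 2$: this requires careful bookkeeping of $q$-exponents and their possible coincidences, with some residual edge cases (for instance $q^{2(d+1)} = 1$) potentially needing input from the second cubic or the mixed quadratic relation in Lemma~\ref{lem:ABc}.
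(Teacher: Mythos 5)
The paper does not actually prove this lemma --- it is quoted verbatim from \cite[Theorem 5.2]{Huang:2015} --- so your argument is being measured against the literature rather than against a proof in the text. Most of what you write is sound: the equivalence (ii) $\Leftrightarrow$ (iii) is correctly handled (the bidiagonal shape makes $v_0$ a cyclic vector for $A$ and $v_d$ a cyclic vector for $B$, so each minimal polynomial equals $\prod_i(X-\theta_i)$ resp.\ $\prod_i(X-\theta_i^*)$, and your factorization $\theta_i-\theta_j=(q^{2i}-q^{2j})(aq^{-d}-a^{-1}q^{d-2i-2j})$ together with the nonvanishing of $\varphi_1,\dots,\varphi_d$ under Lemma \ref{lem:irr} gives exactly condition (iii)); the cubic-relation computation for (iii) $\Rightarrow$ (i) is the standard Terwilliger--Vid\={u}nas mechanism and the bracket does factor as you claim, vanishing precisely when $\theta_j\in\{\theta_{i-1},\theta_{i+1}\}$ or $a^2\in\{q^{2(d-i-j-1)},q^{2(d-i-j+1)}\}$; and the $\mathbb{Z}_3$-twist reducing $(B,C)$ and $(C,A)$ to $(A,B)$ is legitimate, although your identification of the twisted module with $V_d(b,c,a)$ via Proposition \ref{prop:hom} is asserted rather than carried out --- verifying the hypothesis $(B-\theta_1^*)(A-\theta_0)v=\varphi_1 v$ for the twisted action (where $A$, $B$ act as the original $B$, $C$) is a genuine computation, comparable in effort to treating the pair $(B,C)$ directly.

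The one genuine gap is the edge case you yourself flag and then leave open. The hypotheses in force are only $q^4\neq 1$ and, via Lemma \ref{lem:irr}(i), $q^{2i}\neq 1$ for $1\leq i\leq d$; nothing excludes $q^{2(d+1)}=1$. In that case $\theta_{d+1}=aq^{d+2}+a^{-1}q^{-d-2}=aq^{-d}+a^{-1}q^{d}=\theta_0$, so for the corner entries $B_{0,d}$ and $B_{d,0}$ (with $j-i=\mp d$, $i+j=d$) the bracket in your displayed identity vanishes identically and the first cubic relation gives no information; the same happens for the corner entries of $A$ in the eigenbasis of $B$. Condition (iii) does not rescue you here: when $q^{2(d+1)}=1$ the forbidden set $\{q^{2(d-i)}\mid 1\leq i\leq 2d-1\}$ is all of $\{q^{2j}\mid j\in\Z\}$, so (iii) merely says $a^2$, $b^2$ avoid every power of $q^2$, which is consistent with this configuration. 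Closing the gap requires additional input --- e.g.\ the mixed quadratic relation involving $\gamma$ from Lemma \ref{lem:ABc}, or the split-decomposition criterion for Leonard pairs that \cite{Huang:2015} actually uses, which works directly with the bidiagonal data $(\{\theta_i\},\{\theta_i^*\},\{\varphi_i\})$ and avoids the eigenbasis computation altogether. As written, your proof establishes the lemma only under the extra assumption $q^{2(d+1)}\neq 1$ (in particular whenever $q$ is not a root of unity, which covers every application of the lemma made in this paper, but not the lemma as stated).
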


Recall from \cite{cur2007} that three linear transformations on a finite-dimensional vector space $V$ are called a {\it Leonard triple} \cite{cur2007} whenever for each of the three transformations, there exists a basis of $V$ with respect to which the matrix representing that transformation is diagonal and the matrices representing the other transformations are irreducible tridiagonal. \cite[Theorem 5.3]{Huang:2015} gave the following necessary and sufficient conditions for $A$, $B$, $C$ as a Leonard triple on a finite-dimensional irreducible $\triangle_q$-module.

\begin{lem}\label{lem:lt}
Assume that the $\triangle_q$-module $V_d(a,b,c)$ is irreducible. Then the following {\rm (i)}--{\rm(iv)} are equivalent:
\begin{enumerate}
\item $A$, $B$, $C$ act on $V_d(a,b,c)$ as a Leonard triple.

\item Any two of $A$, $B$, $C$ act on $V_d(a,b,c)$ as a Leonard pair.

\item $A$, $B$, $C$ are diagonalizable on $V_d(a,b,c)$.

\item $a^2,$ $b^2,$ $c^2\not\in\{q^{2(d-i)}\,|\,1\leq i\leq 2d-1\}$.
\end{enumerate}
\end{lem}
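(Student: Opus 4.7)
The plan is to verify the cyclic chain of implications (i) $\Rightarrow$ (ii) $\Rightarrow$ (iii) $\Rightarrow$ (iv) $\Rightarrow$ (i), with Lemma \ref{lem:lp} driving the substantive steps and the defining relation of $\triangle_q$ expressing $C$ in terms of $A$, $B$, $\gamma$ providing the upgrade from Leonard pairs to a Leonard triple.

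For (i) $\Rightarrow$ (ii), each of the three bases witnessing the Leonard triple property simultaneously witnesses two Leonard pairs (the diagonalized operator together with each of the two irreducible tridiagonal ones), and cycling through the three such bases covers all three pairs. For (ii) $\Rightarrow$ (iii), every operator belonging to a Leonard pair is diagonalizable, and any two of the three Leonard pairs already exhaust $\{A,B,C\}$. For (iii) $\Rightarrow$ (iv), I would apply the equivalence (ii) $\Leftrightarrow$ (iii) of Lemma \ref{lem:lp} to each of the pairs $(A,B)$, $(B,C)$, $(C,A)$: diagonalizability of each pair forces the two associated squares to lie outside $\{q^{2(d-i)}\mid 1\leq i\leq 2d-1\}$, and taking the union of the three exclusion statements gives (iv).

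The main work is (iv) $\Rightarrow$ (i). Given (iv), Lemma \ref{lem:lp} makes each of the three pairs a Leonard pair, so in particular $A$, $B$, $C$ are all diagonalizable with distinct eigenvalues. I would fix an ordered basis $\{u_0,\ldots,u_d\}$ that diagonalizes $A$ with eigenvalues $\theta_0,\ldots,\theta_d$ and in which $B$ is irreducible tridiagonal, supplied by the Leonard pair $(A,B)$. The defining relation
$$
C=\frac{\gamma}{q+q^{-1}}-\frac{qAB-q^{-1}BA}{q^2-q^{-2}}
$$
shows that $C$ is tridiagonal in this basis, since $\gamma$ acts as the scalar $\omega^\e$ and both $AB$ and $BA$ are tridiagonal when $A$ is diagonal and $B$ is tridiagonal. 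For irreducibility, the $(i,i{+}1)$- and $(i{+}1,i)$-entries of $qAB-q^{-1}BA$ equal $(q\theta_i-q^{-1}\theta_{i+1})B_{i,i+1}$ and $(q\theta_{i+1}-q^{-1}\theta_i)B_{i+1,i}$, respectively. Using $\theta_i=aq^{2i}\nu^{-1}+a^{-1}q^{-2i}\nu$, a direct calculation gives
$$
q\theta_i-q^{-1}\theta_{i+1}=a^{-1}\nu q^{-2i-3}(q^4-1),\qquad q\theta_{i+1}-q^{-1}\theta_i=aq^{2i-1}\nu^{-1}(q^4-1),
$$
both nonzero by the standing hypothesis $q^4\neq 1$. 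Combined with the nonvanishing of the off-diagonal entries of $B$, this yields that $C$ is irreducible tridiagonal in the basis $\{u_i\}$. Cyclically parallel arguments, starting from the Leonard pair $(B,C)$ and using the defining formula expressing $A$ in terms of $\alpha$ and $qBC-q^{-1}CB$, and from $(C,A)$ for $B$, handle the bases diagonalizing $B$ and $C$, completing the Leonard triple verification.

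The principal subtlety is an ordering issue in the (iv) $\Rightarrow$ (i) step: the Leonard pair $(A,B)$ only supplies \emph{some} ordering of $A$'s eigenbasis in which $B$ is irreducible tridiagonal, whereas the nonvanishing computation above implicitly uses the consecutive ordering $\theta_0,\theta_1,\ldots,\theta_d$ built into the construction of $V_d(a,b,c)$. One needs to check that the Leonard pair structure emerging from Lemma \ref{lem:lp} on $V_d(a,b,c)$ respects this ordering (up to reversal, under which the argument survives) before the off-diagonal calculation applies, and this is traced directly from the explicit action of $A$ and $B$ on $\{v_i\}_{i=0}^d$.
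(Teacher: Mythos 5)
The first thing to say is that the paper contains no proof of Lemma \ref{lem:lt}: it is imported verbatim from \cite[Theorem~5.3]{Huang:2015}, so there is no internal argument to compare yours against and it must be judged on its own. Your architecture is sound. The implications (i)$\Rightarrow$(ii)$\Rightarrow$(iii) are immediate from the definitions, (iii)$\Rightarrow$(iv) follows by applying the equivalence of (ii) and (iii) in Lemma \ref{lem:lp} to each of the three pairs, and your route for (iv)$\Rightarrow$(i) --- obtain the three Leonard pairs from Lemma \ref{lem:lp}, then use $C=\tfrac{\gamma}{q+q^{-1}}-\tfrac{qAB-q^{-1}BA}{q^2-q^{-2}}$ and its cyclic analogues to make the third operator irreducible tridiagonal in each eigenbasis --- is a legitimate strategy. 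The identity $(qAB-q^{-1}BA)_{ij}=(q\theta_i-q^{-1}\theta_j)B_{ij}$ for diagonal $A$, and your evaluations $q\theta_i-q^{-1}\theta_{i+1}=a^{-1}\nu q^{-2i-3}(q^4-1)$ and $q\theta_{i+1}-q^{-1}\theta_i=a\nu^{-1}q^{2i-1}(q^4-1)$, check out, and the conclusion is indeed stable under reversing the ordering.

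The unresolved point is exactly the one you flag at the end, and for one of the three bases your proposed fix does not go through as stated. Since $q\theta_{\sigma(i)}-q^{-1}\theta_{\sigma(i+1)}$ can vanish for non-consecutive indices, you genuinely need the tridiagonalizing ordering supplied by Lemma \ref{lem:lp} to be $\theta_0,\dots,\theta_d$ up to reversal. For the $A$-eigenbasis this can be extracted from the explicit action: $A$ is lower bidiagonal and $B$ upper bidiagonal on $\{v_i\}_{i=0}^{d}$, so the consecutively ordered $A$-eigenbasis is triangularly related to $\{v_i\}_{i=0}^{d}$ and the matrix of $B$ there vanishes above the superdiagonal; since irreducible tridiagonality in some ordering $\sigma$ forces both entries $(\sigma(k),\sigma(k+1))$ and $(\sigma(k+1),\sigma(k))$ to be nonzero, every edge of the resulting Hamiltonian path joins consecutive integers, which forces $\sigma$ to be the identity or the reversal. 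The same triangularity handles the $B$-eigenbasis against the tridiagonal action of $C$ on $\{v_i\}_{i=0}^{d}$. But for the $C$-eigenbasis, $C$ acts tridiagonally, not bidiagonally, on $\{v_i\}_{i=0}^{d}$, so its eigenvectors are not triangular in that basis and nothing here is ``traced directly from the explicit action of $A$ and $B$.'' To close that case you need an extra input: either the fact from \cite{lp2001} that a Leonard pair admits exactly two standard orderings of each eigenvalue sequence, each the inversion of the other, together with the identification of the eigenvalues of $C$ on $V_d(a,b,c)$ as $cq^{2i}\nu^{-1}+c^{-1}q^{-2i}\nu$ for $0\le i\le d$; or the $\Z_3$-symmetry of $\triangle_q$ cyclically permuting $A$, $B$, $C$ together with the classification of irreducible $\triangle_q$-modules, which reduces the $C$-eigenbasis case to the $A$-eigenbasis case already treated. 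With either supplement your argument is complete.
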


\section{An embedding of $\triangle_q$ into $\U\otimes \U\otimes \U$}\label{s:embedd}
The outline of this section is as follows. In \S\ref{s:homo} we prove the existence and uniqueness of the homomorphism $\flat:\triangle_q\to \U\otimes \U\otimes \U$ described in \S\ref{s:intro}. Moreover we display the image of the Casimir element $\Omega$ of $\triangle_q$ under $\flat$ in terms of the normalized Casimir element $\Lambda$ of $\U$. In \S\ref{s:inj} we prove that the homomorphism $\flat$ is injective. As an application of the injectivity of $\flat$, in \S\ref{s:hopf&AW} we show that 
for any Hopf $*$-algebra of $\U$, the involution $*\otimes *\otimes *$ of $\U\otimes \U\otimes \U$ can be uniquely pulled back to an algebra involution of $\triangle_q$ via $\flat$.

\subsection{A homomorphism $\flat$ of $\triangle_q$ into $\U\otimes \U\otimes \U$}\label{s:homo}

In this subsection we prove the principal result of this paper:

\begin{thm}\label{thm:embedd}
There exists a unique $\F$-algebra homomorphism $\flat:\triangle_q \to \U\otimes \U\otimes \U$ such that
\begin{eqnarray*}
A^\flat
&=&
\Delta(\Lambda)\otimes 1,
\label{natural:A}
\\
B^\flat
&=&
1\otimes \Delta(\Lambda),
\label{natural:B}
\\
\alpha^\flat
&=&
\Lambda\otimes \Lambda\otimes 1
+(1\otimes 1\otimes \Lambda)\cdot\Delta_2(\Lambda),
\label{natural:alpha}
\\
\beta^\flat
&=&
1\otimes \Lambda\otimes \Lambda
+(\Lambda\otimes 1\otimes 1)\cdot\Delta_2(\Lambda),
\label{natural:beta}
\\
\gamma^\flat
&=&
\Lambda\otimes 1\otimes \Lambda
+(1\otimes \Lambda\otimes 1)
\cdot
\Delta_2(\Lambda),
\label{natural:gamma}
\end{eqnarray*}
and 
\begin{eqnarray*}
C^\flat
&=&
\frac{\Lambda\otimes 1\otimes \Lambda
+(1\otimes \Lambda\otimes 1)\cdot \Delta_2(\Lambda)}
{q+q^{-1}}
\\
&&\;+\;
\frac{
q^{-1}(1\otimes \Delta(\Lambda))\cdot (\Delta(\Lambda)\otimes 1)
-q(\Delta(\Lambda)\otimes 1)\cdot (1\otimes \Delta(\Lambda))}
{q^2-q^{-2}}.
\end{eqnarray*}
\end{thm}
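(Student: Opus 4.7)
The plan is to use the presentation of $\triangle_q$ from Lemma \ref{lem:ABc}, in which $\triangle_q$ is generated by $A$, $B$, $\gamma$ subject to the centrality of $\gamma$ with respect to $A$, $B$, together with two cubic ($q$-Serre-type) relations and one quartic relation coupling $A$, $B$, $\gamma$. Set
\[
X=\Delta(\Lambda)\otimes 1,\qquad Y=1\otimes \Delta(\Lambda),\qquad W=\Lambda\otimes 1\otimes \Lambda+(1\otimes \Lambda\otimes 1)\cdot \Delta_2(\Lambda).
\]
If $(X,Y,W)$ satisfies those four relations, then there is a (unique, since $A,B,\gamma$ generate) $\F$-algebra homomorphism $\flat\colon\triangle_q\to \U\otimes \U\otimes \U$ with $A^\flat=X$, $B^\flat=Y$, $\gamma^\flat=W$. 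The stated formula for $C^\flat$ is then the direct rearrangement of $C=\gamma/(q+q^{-1})-(qAB-q^{-1}BA)/(q^2-q^{-2})$, and the claimed formulas for $\alpha^\flat$, $\beta^\flat$ are forced by substituting this $C^\flat$ into the defining expressions for $\alpha$, $\beta$.

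For the centrality of $W$ with respect to $X$ and $Y$: the single-factor contributions $\Lambda\otimes 1\otimes \Lambda$ and $1\otimes \Lambda\otimes 1$ are central in $\U^{\otimes 3}$ because $\Lambda$ is central in $\U$, so it suffices to verify $[\Delta_2(\Lambda),X]=0$ and $[\Delta_2(\Lambda),Y]=0$. By coassociativity $\Delta_2(\Lambda)=(\Delta\otimes 1)(\Delta(\Lambda))$, while simultaneously $X=(\Delta\otimes 1)(\Lambda\otimes 1)$; applying the algebra homomorphism $\Delta\otimes 1$ to the commutator $[\Delta(\Lambda),\Lambda\otimes 1]$ (which vanishes because $\Lambda\otimes 1$ is central in $\U\otimes \U$) yields $[\Delta_2(\Lambda),X]=0$. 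The parallel argument via $\Delta_2=(1\otimes \Delta)\circ \Delta$ and $Y=(1\otimes \Delta)(1\otimes \Lambda)$ gives $[\Delta_2(\Lambda),Y]=0$.

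The main obstacle is the verification of the two cubic relations and the quartic relation of Lemma \ref{lem:ABc} for $(X,Y,W)$. I would expand $\Delta(\Lambda)$ via (\ref{e:Lambda})--(\ref{e:DK}), express $X$ and $Y$ in the PBW basis of $\U^{\otimes 3}$ provided by Lemma \ref{lem:basisfoldU}, and reduce $XY$, $YX$, and the cubic/quartic expressions to normal form. The two cubic relations involve only $X$ and $Y$, so they reduce to identities verifiable grade-by-grade in the $\Z^3$-grading of $\U^{\otimes 3}$ described after Lemma \ref{lem:basisU}. The quartic relation is the delicate one: after the expansion, the obstruction to commutativity of $X^2$ and $Y^2$ must match exactly $(q-q^{-1})^2[X,Y]\,W$, with the $W$-contribution traced back to the coefficients carrying $\Lambda\otimes 1\otimes \Lambda$ and $(1\otimes \Lambda\otimes 1)\Delta_2(\Lambda)$ in the expanded product. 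This is bookkeeping-intensive but mechanical; organization by $\Z^3$-grade confines each identity to a low-dimensional graded component.

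Once $\flat$ is established, verifying the $\alpha^\flat$ formula requires substituting $C^\flat=W/(q+q^{-1})-(qXY-q^{-1}YX)/(q^2-q^{-2})$ into $\alpha^\flat=(q+q^{-1})X+(qYC^\flat-q^{-1}C^\flat Y)/(q^2-q^{-2})$. The $W$-piece of $C^\flat$ contributes a term simplifiable by the centrality established above, while the $q$-commutator piece produces a double $q$-commutator in $X$, $Y$ that collapses using the cubic relation. After this cancellation, the result should coincide with $\Lambda\otimes \Lambda\otimes 1+(1\otimes 1\otimes \Lambda)\cdot \Delta_2(\Lambda)$; the formula for $\beta^\flat$ follows by the symmetric argument with the roles of $X$ and $Y$ interchanged.
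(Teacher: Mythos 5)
Your strategy is sound and would yield a correct proof, but it takes a genuinely different route from the paper's. The paper works directly with the original presentation of $\triangle_q$ (the three expressions $A+\frac{qBC-q^{-1}CB}{q^2-q^{-2}}$, etc., are central): it first shows that the proposed $\alpha^\flat$, $\beta^\flat$, $\gamma^\flat$ commute with $A^\flat$, $B^\flat$, $C^\flat$ (Lemma \ref{lem:commu} with $n=1$, which is essentially your $(\Delta\otimes 1)$/$(1\otimes\Delta)$ argument), and then verifies the three identities $\frac{\alpha^\flat}{q+q^{-1}}=A^\flat+\frac{qB^\flat C^\flat-q^{-1}C^\flat B^\flat}{q^2-q^{-2}}$, etc., by expanding homogeneous components against the basis of Lemma \ref{lem:basisfoldU}; the $\gamma$-identity holds by the very construction of $C^\flat$, so only two graded computations of degree at most three in the generators are needed, and they establish existence and the stated formulas for $\alpha^\flat$, $\beta^\flat$ in one stroke. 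You instead route existence through the $A$, $B$, $\gamma$ presentation of Lemma \ref{lem:ABc}, which obliges you to verify the two degree-four $q$-Serre relations and the degree-four relation coupling $A$, $B$, $\gamma$ for $(X,Y,W)$ --- computations one degree higher than the paper's --- and afterwards you must still perform computations equivalent to the paper's two identities to confirm that the images of $\alpha$ and $\beta$ coincide with the stated formulas. Both routes bottom out in the same kind of graded PBW bookkeeping; the paper's choice of presentation simply minimizes how much of it is required. One small correction to your last paragraph: substituting $C^\flat$ into $\frac{qB^\flat C^\flat-q^{-1}C^\flat B^\flat}{q^2-q^{-2}}$ produces the degree-three combination $(q^2+q^{-2})YXY-Y^2X-XY^2$ (plus a $YW$ term); this does not ``collapse using the cubic relation'' of Lemma \ref{lem:ABc}, whose monomials have degree four --- it must simply be evaluated directly in $\U\otimes\U\otimes\U$ and compared with the claimed $\alpha^\flat$, exactly as in the paper's verification.
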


To verify the existence of $\flat$ we make some preparation.

\begin{lem}\label{lem:commu}
For all $n\in \N^*$
the elements $1\otimes \Delta_{n-1}(\Lambda)$ and $\Delta_{n-1}(\Lambda)\otimes 1$ are contained in the centralizer of $\Delta_{n}(\U)$ in $\U^{\otimes (n+1)}$.
\end{lem}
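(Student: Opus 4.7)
The plan is to reduce the statement to the single fact that $\Lambda$ is central in $\U$, plus coassociativity of $\Delta$. Since $\Lambda$ lies in the center of $\U$ and $\Delta_{n-1}:\U\to \U^{\otimes n}$ is an $\F$-algebra homomorphism (being a composition of the algebra homomorphism $\Delta$ with identities and with itself), the element $\Delta_{n-1}(\Lambda)\in \U^{\otimes n}$ automatically commutes with every element of the subalgebra $\Delta_{n-1}(\U)\subseteq \U^{\otimes n}$.

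Next I would produce two alternative descriptions of $\Delta_n$ via coassociativity, to be used respectively for the two assertions. By a short induction on $n$, relying on (\ref{e:Delta}) and the defining recursion $\Delta_n=(1^{\otimes(n-1)}\otimes\Delta)\circ \Delta_{n-1}$, one obtains
\begin{gather*}
\Delta_n=(\Delta_{n-1}\otimes 1)\circ \Delta,\qquad \Delta_n=(1\otimes \Delta_{n-1})\circ \Delta.
\end{gather*}
Both identities are elementary: once we know $\Delta_n=(1\otimes \Delta_{n-1})\circ \Delta$ inductively, apply $1^{\otimes(n-1)}\otimes \Delta$ on the left and pull it inside to get the same identity with $n$ replaced by $n+1$; the first identity is handled analogously using Lemma \ref{lem:Dn} with $i=n$.

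With these in hand, fix $x\in \U$ and write $\Delta(x)=\sum x'\otimes x''$. For the claim about $\Delta_{n-1}(\Lambda)\otimes 1$, I would use the first decomposition so that $\Delta_n(x)=\sum \Delta_{n-1}(x')\otimes x''$, whence
\begin{gather*}
(\Delta_{n-1}(\Lambda)\otimes 1)\cdot \Delta_n(x)=\sum \Delta_{n-1}(\Lambda)\Delta_{n-1}(x')\otimes x''
=\sum \Delta_{n-1}(\Lambda x')\otimes x''.
\end{gather*}
Since $\Lambda x'=x'\Lambda$ and $\Delta_{n-1}$ is an algebra homomorphism, the right-hand side equals $\Delta_n(x)\cdot (\Delta_{n-1}(\Lambda)\otimes 1)$. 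The assertion for $1\otimes \Delta_{n-1}(\Lambda)$ is dual: use the second decomposition $\Delta_n(x)=\sum x'\otimes \Delta_{n-1}(x'')$ and run the identical argument on the second tensor factor.

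There is essentially no obstacle; the only thing to be careful about is ensuring that the two forms of $\Delta_n$ are correctly derived from coassociativity so that $\Delta_n(x)$ can, as needed, be split either as $\Delta_{n-1}(\,\cdot\,)\otimes (\,\cdot\,)$ or as $(\,\cdot\,)\otimes \Delta_{n-1}(\,\cdot\,)$. After that, centrality of $\Lambda$ in $\U$ and the fact that $\Delta_{n-1}$ preserves multiplication do all the work.
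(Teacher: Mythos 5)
Your proof is correct, but it reaches the conclusion by a different decomposition of $\Delta_n$ than the paper uses. The paper proves the lemma by induction on $n$: the base case $n=1$ is just the centrality of $\Lambda$, and the inductive step transports the commutation relations from level $n-1$ to level $n$ by applying the algebra homomorphisms $1^{\otimes(n-1)}\otimes\Delta$ and (via Lemma \ref{lem:Dn}) $\Delta\otimes 1^{\otimes(n-1)}$, which carry $\Delta_{n-1}(u)$ to $\Delta_{n}(u)$ and carry $1\otimes\Delta_{n-2}(\Lambda)$, $\Delta_{n-2}(\Lambda)\otimes 1$ to $1\otimes\Delta_{n-1}(\Lambda)$, $\Delta_{n-1}(\Lambda)\otimes 1$ respectively. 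You instead push the induction into the auxiliary factorizations $\Delta_n=(\Delta_{n-1}\otimes 1)\circ\Delta$ and $\Delta_n=(1\otimes\Delta_{n-1})\circ\Delta$ (both correct consequences of (\ref{e:Delta}) and Lemma \ref{lem:Dn}, and your inductive derivations of them check out) and then finish in one step with a Sweedler-notation computation, using only that $\Delta_{n-1}$ is an algebra homomorphism and $\Lambda$ is central. The two arguments rest on the same two ingredients, coassociativity and the centrality of $\Lambda$, so neither is deeper than the other; your version isolates a reusable general fact about iterated comultiplications in any bialgebra and makes the commutation a one-line corollary, while the paper's version avoids Sweedler sums and keeps every step at the level of images of elements under homomorphisms. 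I see no gaps in your argument.
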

\begin{proof}
Proceed by induction on $n$. Since $1$, $\Lambda$ are central in $\U$
each of $1\otimes \Lambda$, $\Lambda\otimes 1$ commutes with all elements of $\Delta(\U)$. Therefore the lemma holds for $n=1$. Now suppose $n\geq 2$. Let $u$ denote any element of $\U$.
By induction hypothesis the element $\Delta_{n-1}(u)$ commutes with
$1\otimes \Delta_{n-2}(\Lambda)$ and $\Delta_{n-2}(\Lambda)\otimes 1$.
The elements $\Delta_{n}(u)$ and $1\otimes \Delta_{n-1}(\Lambda)$ are the images of $\Delta_{n-1}(u)$ and $1\otimes \Delta_{n-2}(\Lambda)$ under the homomorphism
$1^{\otimes(n-1)}\otimes \Delta$,
respectively. Hence $\Delta_{n}(u)$ commutes with $1\otimes \Delta_{n-1}(\Lambda)$. By Lemma \ref{lem:Dn} the elements $\Delta_{n}(u)$ and $\Delta_{n-1}(\Lambda)\otimes 1$ are the images of $\Delta_{n-1}(u)$ and $\Delta_{n-2}(\Lambda)\otimes 1$ under the homomorphism
$\Delta\otimes 1^{\otimes(n-1)}$,
respectively. Hence $\Delta_{n}(u)$ commutes with $\Delta_{n-1}(\Lambda)\otimes 1$. The lemma follows.
\end{proof}

\begin{lem}\label{lem:DeltaL}
The homogeneous components of $\Delta(\Lambda)$ and $\Delta_2(\Lambda)$ are given in the tables below. Any homogeneous component not displayed is zero.

\begin{table}[H]
\small
\centering
\extrarowheight=3pt
\begin{tabular}{c|c}
degree  &$\Delta(\Lambda)$\\

\midrule[2pt]

$(-1,1)$
&$
q^2(q-q^{-1})^2 KF\otimes K^{-1}E
$\\

\midrule[1pt]

\multirow{2}{*}{$(0,0)$}
&$
\Lambda\otimes K^{-1}
+K\otimes \Lambda
$\\

&$
\;-\;(q+q^{-1}) K\otimes K^{-1}
$\\

\midrule[1pt]

$(1,-1)$
&$
(q-q^{-1})^2 E\otimes F
$
\end{tabular}
\end{table}


\begin{table}[H]
\small
\centering
\extrarowheight=3pt
\begin{tabular}{c|c}
degree &$\Delta_2(\Lambda)$\\

\midrule[2pt]

$(-1,0,1)$
&$
q^2(q-q^{-1})^2 KF\otimes 1\otimes K^{-1}E
$\\

\midrule[1pt]

$(-1,1,0)$
&$
q^2(q-q^{-1})^2 KF\otimes K^{-1}E\otimes K^{-1}
$\\

\midrule[1pt]

$(0,-1,1)$
&$
q^2(q-q^{-1})^2 K\otimes KF\otimes K^{-1}E
$
\\

\midrule[1pt]

\multirow{2}{*}{
$(0,0,0)$
}
&$
K\otimes K\otimes \Lambda
+K\otimes \Lambda\otimes K^{-1}
+\Lambda\otimes K^{-1}\otimes K^{-1}
$
\\

&$
\;-\;
(q+q^{-1})K\otimes K\otimes K^{-1}
-(q+q^{-1})K\otimes K^{-1}\otimes K^{-1}
$\\

\midrule[1pt]

$(0,1,-1)$
&$
(q-q^{-1})^2 K\otimes E\otimes F
$\\

\midrule[1pt]

$(1,-1,0)$
&$
(q-q^{-1})^2 E\otimes F\otimes K^{-1}
$\\

\midrule[1pt]

$(1,0,-1)$
&$
(q-q^{-1})^2 E\otimes 1\otimes F
$
\end{tabular}
\end{table}
\end{lem}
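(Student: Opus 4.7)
The plan is to prove both tables by direct expansion, starting from the formula $\Lambda=(q-q^{-1})^2 EF+q^{-1}K+qK^{-1}$ and the values of $\Delta$ on generators.

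For $\Delta(\Lambda)$, I would use that $\Delta$ is an $\F$-algebra homomorphism to write
\begin{gather*}
\Delta(\Lambda)=(q-q^{-1})^2\,\Delta(E)\Delta(F)+q^{-1}\Delta(K)+q\Delta(K^{-1}).
\end{gather*}
Substituting (\ref{e:DE}), (\ref{e:DF}), (\ref{e:DK}) and expanding gives four mixed terms in $\Delta(E)\Delta(F)$. Three of them are already in normal form, and the fourth, $(K\otimes E)(F\otimes K^{-1})=KF\otimes EK^{-1}$, is normalized via the consequence $EK^{-1}=q^2K^{-1}E$ of $KE=q^2EK$; this explains the factor $q^2(q-q^{-1})^2$ in the $(-1,1)$ component. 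The terms $EF\otimes K^{-1}$ and $K\otimes EF$ are then converted back using (\ref{e:Lambda}), producing the homogeneous pieces $\Lambda\otimes K^{-1}$ and $K\otimes \Lambda$ together with extra $K^{\pm 1}\otimes K^{\pm 1}$ contributions. The $K\otimes K$ and $K^{-1}\otimes K^{-1}$ contributions cancel exactly against $q^{-1}\Delta(K)$ and $q\Delta(K^{-1})$, while the two $K\otimes K^{-1}$ contributions combine to $-(q+q^{-1})K\otimes K^{-1}$. Reading off homogeneous components yields the first table.

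For $\Delta_2(\Lambda)$, I would apply $1\otimes \Delta$ term by term to the expression for $\Delta(\Lambda)$ just derived, using $\Delta_2=(1\otimes \Delta)\circ \Delta$. Each factor in the second slot is expanded by the formulas for $\Delta(\Lambda)$, $\Delta(K^{-1})$, $\Delta(F)$, or $\Delta(K^{-1}E)=(K^{-1}\otimes K^{-1})(E\otimes 1+K\otimes E)=K^{-1}E\otimes K^{-1}+1\otimes K^{-1}E$. Expanding and collecting terms by degree reproduces the seven displayed homogeneous pieces; in particular the components with $\Lambda$ in a single slot come from inserting the already computed homogeneous $(0,0)$ part of $\Delta(\Lambda)$ in the second factor of $K\otimes \Delta(\Lambda)$.

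Both computations are purely mechanical applications of the axioms together with the single commutation identity $EK^{-1}=q^2K^{-1}E$. The only real obstacle is bookkeeping: keeping track of which $q$-powers appear after reordering and ensuring every tensor slot has the claimed graded degree. No conceptual input beyond (\ref{e:Lambda}), the Hopf-algebra formulas of \S\ref{s:hopf}, and the grading (G1)--(G2) is needed.
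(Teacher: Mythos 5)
Your proposal is correct and follows essentially the same route as the paper: expand $\Delta(\Lambda)$ directly from the Casimir formula (\ref{e:Lambda}) and the comultiplication formulas (\ref{e:DE})--(\ref{e:DK}), convert the resulting $EF$-type terms back into $\Lambda$ and $K^{\pm 1}$ to read off the $(0,0)$ component, then apply $1\otimes\Delta$ slotwise to obtain the second table. The only (immaterial) difference is that you start from the $EF$-ordered expression of $\Lambda$ while the paper uses the $FE$-ordered one together with Lemma \ref{lem:EF}(ii); both orderings produce the same commutation factor $q^2$ and the same cancellations.
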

\begin{proof}
By (\ref{e:Lambda}) the element $\Delta(\Lambda)$ is equal to
\begin{gather}\label{e:DL}
(q-q^{-1})^2 \Delta(F)\Delta(E)
+q \Delta(K)
+q^{-1}\Delta(K)^{-1}.
\end{gather}
To get the table for $\Delta(\Lambda)$ one may substitute (\ref{e:DE})--(\ref{e:DK}) into (\ref{e:DL}) and use Lemma \ref{lem:EF}(ii) to evaluate the  homogeneous component of $\Delta(\Lambda)$ of degree $(0,0)$.
By the definition of $\Delta_2$ the element $\Delta_2(\Lambda)=(1\otimes \Delta)(\Delta(\Lambda))$. To get the table of $\Delta_2(\Lambda)$ one may apply the identity to evaluate $\Delta_2(\Lambda)$.
\end{proof}

For convenience, let $A^\flat$, $B^\flat$, $C^\flat$, $\alpha^\flat$, $\beta^\flat$, $\gamma^\flat$ denote the elements of $\U\otimes \U\otimes \U$ as in the statement of Theorem \ref{thm:embedd}.

\begin{lem}\label{lem:ABCflat}
The homogeneous components of  $A^\flat$, $B^\flat$, $C^\flat$, $\alpha^\flat$, $\beta^\flat$, $\gamma^\flat$ are given in the tables below. Any homogeneous component not displayed is zero.

\begin{table}[H]
\begin{minipage}[H]{0.5\textwidth}
\small
\centering
\extrarowheight=3.3pt
\begin{tabular}{c|c}
degree  &$A^\flat$\\

\midrule[2pt]

$(-1,1,0)$
&$
q^2(q-q^{-1})^2 KF\otimes K^{-1}E\otimes 1
$
\\

\midrule[1pt]

\multirow{2}{*}{$(0,0,0)$}
&$
\Lambda\otimes K^{-1}\otimes 1
+K\otimes \Lambda\otimes 1
$
\\

&$
\;-\;(q+q^{-1})K\otimes K^{-1}\otimes 1
$
\\


\midrule[1pt]

$(1,-1,0)$
&$
(q-q^{-1})^2 E\otimes F\otimes 1
$

\end{tabular}
\end{minipage}%
\begin{minipage}[H]{0.5\textwidth}
\small
\centering
\extrarowheight=3pt
\begin{tabular}{c|c}
degree  &$B^\flat$\\

\midrule[2pt]

$(0,-1,1)$
&$
q^2(q-q^{-1})^2\otimes KF\otimes K^{-1}E
$
\\

\midrule[1pt]

\multirow{2}{*}{$(0,0,0)$}
&$
1\otimes \Lambda\otimes K^{-1}
+1\otimes K\otimes \Lambda
$
\\

&$
\;-\;(q+q^{-1})\otimes K\otimes K^{-1}
$
\\


\midrule[1pt]

$(0,1,-1)$
&$
(q-q^{-1})^2\otimes E\otimes F
$
\end{tabular}
\end{minipage}
\end{table}

\begin{table}[H]
\small
\centering
\extrarowheight=3pt
\begin{tabular}{c|c}
degree &$C^\flat$\\

\midrule[2pt]

$(-1,0,1)$
&$
q^2(q-q^{-1})^2 KF\otimes K^{-1}\otimes K^{-1}E
$
\\

\midrule[1pt]

$(0,-1,1)$

&$
q(q-q^{-1})(q^2-q^{-2}) K\otimes F\otimes K^{-1}E
-q(q-q^{-1})^2 \Lambda\otimes F\otimes K^{-1}E
$
\\

\midrule[1pt]

$(0,0,0)$

&$
\Lambda\otimes 1\otimes K^{-1}
+K\otimes 1\otimes \Lambda
-(q+q^{-1}) K\otimes 1\otimes K^{-1}
$
\\

\midrule[1pt]

$(1,-2,1)$
&
$
-q^3(q-q^{-1})^4E\otimes KF^2\otimes K^{-1}E
$
\\

\midrule[1pt]

$(1,-1,0)$

&$
q(q-q^{-1})(q^2-q^{-2}) E\otimes KF\otimes K^{-1}
-q(q-q^{-1})^2 E\otimes KF\otimes \Lambda
$
\\

\midrule[1pt]

$(1,0,-1)$

&$
(q-q^{-1})^2 E\otimes K \otimes F
$

\end{tabular}
\end{table}

\begin{table}[H]
\small
\centering
\extrarowheight=3.3pt
\begin{tabular}{c|c}
degree &$\alpha^\flat$\\

\midrule[2pt]

$(-1,0,1)$
&$
q^2(q-q^{-1})^2 KF\otimes 1\otimes \Lambda K^{-1} E
$\\

\midrule[1pt]

$(-1,1,0)$
&$
q^2(q-q^{-1})^2 KF\otimes K^{-1} E\otimes K^{-1}\Lambda
$
\\

\midrule[1pt]

$(0,-1,1)$
&$
q^2(q-q^{-1})^2 K\otimes KF\otimes \Lambda K^{-1} E
$
\\

\midrule[1pt]

\multirow{2}{*}{
$(0,0,0)$
}

&$
\Lambda\otimes K^{-1}\otimes K^{-1}\Lambda
+\Lambda\otimes \Lambda\otimes 1
+K\otimes K\otimes \Lambda^2
+K\otimes \Lambda\otimes K^{-1}\Lambda
$\\

&$
\;-\;(q+q^{-1})(
K\otimes K^{-1}\otimes K^{-1}\Lambda
+K\otimes K\otimes K^{-1}\Lambda)
$
\\

\midrule[1pt]

$(0,1,-1)$
&$
(q-q^{-1})^2 K\otimes E\otimes \Lambda F
$
\\

\midrule[1pt]

$(1,-1,0)$
&$
(q-q^{-1})^2 E\otimes F\otimes \Lambda K^{-1}
$\\

\midrule[1pt]

$(1,0,-1)$
&$
(q-q^{-1})^2 E\otimes 1\otimes \Lambda F
$
\end{tabular}
\end{table}

\begin{table}[H]
\small
\centering
\extrarowheight=3pt
\begin{tabular}{c|c}
degree &$\beta^\flat$\\

\midrule[2pt]

$(-1,0,1)$
&$
q^2(q-q^{-1})^2 \Lambda K F\otimes 1\otimes K^{-1} E
$
\\

\midrule[1pt]

$(-1,1,0)$
&$
q^2(q-q^{-1})^2 \Lambda K F\otimes K^{-1} E\otimes K^{-1}
$
\\

\midrule[1pt]

$(0,-1,1)$
&$
q^2(q-q^{-1})^2 \Lambda K\otimes KF\otimes K^{-1}E
$
\\

\midrule[1pt]

\multirow{2}{*}{
$(0,0,0)$
}

&$
\Lambda^2\otimes K^{-1}\otimes K^{-1}
+1\otimes \Lambda\otimes \Lambda
+K\Lambda\otimes K\otimes \Lambda
+K\Lambda\otimes \Lambda\otimes K^{-1}
$\\

&$
\;-\;(q+q^{-1})(
K\Lambda\otimes K^{-1}\otimes K^{-1}
+K\Lambda\otimes K\otimes K^{-1}
)
$
\\

\midrule[1pt]

$(0,1,-1)$
&$
(q-q^{-1})^2 \Lambda K \otimes E\otimes F
$
\\

\midrule[1pt]

$(1,-1,0)$
&$
(q-q^{-1})^2 \Lambda E\otimes F\otimes K^{-1}
$\\

\midrule[1pt]

$(1,0,-1)$
&$
(q-q^{-1})^2 \Lambda E\otimes 1\otimes F
$
\end{tabular}
\end{table}

\begin{table}[H]
\small
\centering
\extrarowheight=3pt
\begin{tabular}{c|c}
degree &$\gamma^\flat$\\

\midrule[2pt]

$(-1,0,1)$
&$
q^2(q-q^{-1})^2 K F\otimes \Lambda\otimes K^{-1} E
$\\

\midrule[1pt]

$(-1,1,0)$
&$
q^2(q-q^{-1})^2 KF\otimes \Lambda K^{-1} E\otimes K^{-1}
$
\\

\midrule[1pt]

$(0,-1,1)$
&$
q^2(q-q^{-1})^2 K\otimes \Lambda K F\otimes K^{-1} E
$
\\

\midrule[1pt]

\multirow{2}{*}{
$(0,0,0)$
}

&$
\Lambda\otimes K^{-1}\Lambda \otimes K^{-1}
+\Lambda\otimes 1\otimes \Lambda
+K\otimes K \Lambda\otimes \Lambda
+K\otimes \Lambda^2\otimes K^{-1}
$\\

&$
\;-\;(q+q^{-1})(
K\otimes K^{-1}\Lambda\otimes K^{-1}
+K\otimes K\Lambda\otimes K^{-1}
)
$
\\

\midrule[1pt]

$(0,1,-1)$
&$
(q-q^{-1})^2 K\otimes \Lambda E\otimes F
$
\\

\midrule[1pt]

$(1,-1,0)$
&$
(q-q^{-1})^2 E\otimes \Lambda F\otimes K^{-1}
$\\

\midrule[1pt]

$(1,0,-1)$
&$
(q-q^{-1})^2 E\otimes \Lambda\otimes F
$

\end{tabular}
\end{table}
\end{lem}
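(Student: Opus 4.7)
The plan is to derive all six tables by direct substitution into the defining formulas for $A^\flat, B^\flat, C^\flat, \alpha^\flat, \beta^\flat, \gamma^\flat$, collecting the pieces according to the $\Z^3$-grading of $\U^{\otimes 3}$ from (G1)--(G2). The only nontrivial inputs needed are Lemma \ref{lem:DeltaL} (the homogeneous decompositions of $\Delta(\Lambda)$ and $\Delta_2(\Lambda)$) and the central property of $\Lambda$ in $\U$, which puts $\Lambda \in U_0$. I would also use $KE=q^2EK$, $KF=q^{-2}FK$ freely to move $K^{\pm 1}$ past $E$ or $F$ within a tensor slot.

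First I would handle $A^\flat=\Delta(\Lambda)\otimes 1$ and $B^\flat=1\otimes\Delta(\Lambda)$: their tables are immediate from Lemma \ref{lem:DeltaL} by padding each degree $(i,j)$ with a trailing or leading $0$. Next for $\alpha^\flat,\beta^\flat,\gamma^\flat$: each is the sum of a central term of degree $(0,0,0)$ (for example $\Lambda\otimes\Lambda\otimes 1$) and a product of the form $(1\otimes 1\otimes\Lambda)\cdot\Delta_2(\Lambda)$, which preserves degrees because $\Lambda\in U_0$. Thus I multiply every row of the $\Delta_2(\Lambda)$ table by $\Lambda$ in the designated slot and add the central term to the $(0,0,0)$ row. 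The seven nonzero degrees of $\Delta_2(\Lambda)$ match the seven rows in each of the $\alpha^\flat, \beta^\flat, \gamma^\flat$ tables.

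The core of the argument is the $C^\flat$ table. I would rewrite
\[
C^\flat \;=\; \frac{\gamma^\flat}{q+q^{-1}} \;+\; \frac{q^{-1}B^\flat A^\flat - q\,A^\flat B^\flat}{q^2-q^{-2}},
\]
and then expand $A^\flat B^\flat$ and $B^\flat A^\flat$ using the three-piece tables for $A^\flat$ and $B^\flat$. Each product splits into nine homogeneous summands indexed by pairs (degree of $A^\flat$-factor, degree of $B^\flat$-factor). Six of the resulting degrees, namely $(\pm 1, 0, \mp 1)$, $(\mp 1, \pm 1, 0)$ and $(0, \mp 1, \pm 1)$, receive contributions only from a $(0,0,0)$ factor paired with a nonzero-degree factor; these are one-line calculations using the commutation relations. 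The degree $(1,-2,1)$ arises solely from $(1,-1,0)\cdot(0,-1,1)$ in $A^\flat B^\flat$ and $(0,-1,1)\cdot(1,-1,0)$ in $B^\flat A^\flat$, which coincide after applying $FK=q^2 KF$; the coefficient $-q^3(q-q^{-1})^4$ then falls out of the $q$-commutator and matches the table. The degree $(-1,2,-1)$ also arises from exactly one pair in each product; these two contributions cancel in $q^{-1}B^\flat A^\flat - q\,A^\flat B^\flat$ after applying $EK^{-1}=q^2 K^{-1}E$, explaining why $(-1,2,-1)$ does not appear in the $C^\flat$ table. The three degrees $(-1,1,0)$, $(0,0,0)$ and $(0,1,-1)$ that appear in $\gamma^\flat$ but not in $C^\flat$ must vanish after adding the $\gamma^\flat/(q+q^{-1})$ contribution; a short coefficient check verifies this for the off-diagonal degrees.

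The main obstacle will be the $(0,0,0)$ component of $C^\flat$. Here both $A^\flat B^\flat$ and $B^\flat A^\flat$ receive contributions from three different pairings: $(0,0,0)\cdot(0,0,0)$ together with $(\mp 1,\pm 1,0)\cdot(0,\mp 1,\pm 1)$-style pairings that land back in degree $(0,0,0)$. The latter produce monomials involving $EF$ or $FE$ inside a tensor slot, which I would rewrite in terms of $\Lambda$ and $K^{\pm 1}$ via Lemma \ref{lem:EF}. After this substitution, centrality of $\Lambda$ collapses the many resulting terms; what remains, once $\gamma^\flat/(q+q^{-1})$ is added, must agree with the listed expression $\Lambda\otimes 1\otimes K^{-1}+K\otimes 1\otimes \Lambda-(q+q^{-1})K\otimes 1\otimes K^{-1}$. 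The identification is carried out in the PBW basis of Lemma \ref{lem:basisfoldU}, which guarantees that equality of the simplified expressions forces equality of coefficients. This is a bookkeeping computation, but its length and the need to invoke Lemma \ref{lem:EF} make it the only nontrivial step.
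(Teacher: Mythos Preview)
Your approach is correct and essentially identical to the paper's: the tables for $A^\flat$, $B^\flat$, $\alpha^\flat$, $\beta^\flat$, $\gamma^\flat$ are read off from Lemma~\ref{lem:DeltaL}, and $C^\flat$ is obtained by expanding the $q$-commutator of $A^\flat$ and $B^\flat$ homogeneously and invoking Lemma~\ref{lem:EF} to reduce the $(0,0,0)$ component to the basis of Lemma~\ref{lem:basisfoldU}. One small slip: you list $(0,0,0)$ among the degrees ``that appear in $\gamma^\flat$ but not in $C^\flat$'', but $(0,0,0)$ does appear in $C^\flat$---only $(-1,1,0)$ and $(0,1,-1)$ must cancel completely, and you correctly treat $(0,0,0)$ separately in the next paragraph.
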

\begin{proof}
The tables for $A^\flat$, $B^\flat$, $\alpha^\flat$, $\beta^\flat$, $\gamma^\flat$ are immediate from Lemma \ref{lem:DeltaL}. To obtain the table for $C^\flat$, one may apply Lemma \ref{lem:EF} to express each homogeneous component of $(\Delta(\Lambda)\otimes 1)\cdot (1\otimes \Delta(\Lambda))$ and $(1\otimes \Delta(\Lambda))\cdot (\Delta(\Lambda)\otimes 1)$ as a linear combination of the corresponding basis given in Lemma \ref{lem:basisfoldU}.
\end{proof}

We are now in the position to prove Theorem \ref{thm:embedd}.

\medskip

\noindent{\it Proof of Theorem \ref{thm:embedd}.}
By Lemma \ref{lem:commu} with $n=1$ the element $\Delta_2(\Lambda)$ commutes with $1\otimes \Delta(\Lambda)$ and $\Delta(\Lambda)\otimes 1$.  Since $\Lambda$ is central in $\U$ each of $1\otimes 1\otimes \Lambda$, $\Lambda\otimes 1\otimes 1$, $1\otimes \Lambda\otimes 1$ commutes with all elements of $\U\otimes \U\otimes \U$. Concluding from the above comments,  each of $\alpha^\flat$, $\beta^\flat$, $\gamma^\flat$ commutes with $A^\flat$, $B^\flat$, $C^\flat$. To see the existence of $\flat$ it remains to verify that
\begin{eqnarray}
\frac{\alpha^\flat}{q+q^{-1}}
&=&
A^\flat+
\frac{qB^\flat C^\flat-q^{-1}C^\flat B^\flat}{q^2-q^{-2}},
\label{e:v1}
\\
\frac{\beta^\flat}{q+q^{-1}}
&=&
B^\flat+
\frac{qC^\flat A^\flat-q^{-1}A^\flat C^\flat}{q^2-q^{-2}},
\label{e:v2}\\
\frac{\gamma^\flat}{q+q^{-1}}
&=&
C^\flat+
\frac{qA^\flat B^\flat-q^{-1}B^\flat A^\flat}{q^2-q^{-2}}.
\label{e:v3}
\end{eqnarray}
Equation (\ref{e:v3}) is immediate from the construction of $A^\flat$, $B^\flat$, $C^\flat$, $\gamma^\flat$. To verify (\ref{e:v1}) and (\ref{e:v2}), one may utilize Lemmas \ref{lem:EF} and \ref{lem:ABCflat} to express each homogeneous component of the right-hand sides of (\ref{e:v1}), (\ref{e:v2}) as a linear combination of the corresponding basis given in Lemma \ref{lem:basisfoldU}. Then it is able to check that the corresponding homogeneous components of both sides of (\ref{e:v1}), (\ref{e:v2}) are equal to each other. After the tedious verification  (\ref{e:v1}), (\ref{e:v2}) follow. This shows the existence of $\flat$. The homomorphism $\flat$ is unique since the $\F$-algebra $\triangle_q$ is generated by $A$, $B$, $C$.
\hfill $\square$

\medskip

Recall the formula (\ref{e:Casimirtriangle}) for the Casimir element $\Omega$ of $\triangle_q$. 

\begin{thm}\label{thm:Casimir}
The image of the Casimir element $\Omega$ of $\triangle_q$ under $\flat$ is equal to
\begin{gather}\label{e:Casimirflat}
(q+q^{-1})^2
-\Lambda^2\otimes 1\otimes 1
-1\otimes \Lambda^2\otimes 1
-1\otimes 1\otimes \Lambda^2
-(\Lambda\otimes \Lambda\otimes \Lambda)\cdot \Delta_2(\Lambda)
-\Delta_2(\Lambda)^2.
\end{gather}
\end{thm}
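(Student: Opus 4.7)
The plan is a direct, if lengthy, computation. To streamline notation, write $L_i$ for $\Lambda$ placed in the $i$-th tensor slot of $\U^{\otimes 3}$, and set $L_{12}=\Delta(\Lambda)\otimes 1$, $L_{23}=1\otimes\Delta(\Lambda)$, and $L_{123}=\Delta_2(\Lambda)$. Theorem \ref{thm:embedd} then reads $A^\flat=L_{12}$, $B^\flat=L_{23}$, $\alpha^\flat=L_1L_2+L_3L_{123}$, $\beta^\flat=L_2L_3+L_1L_{123}$, $\gamma^\flat=L_1L_3+L_2L_{123}$, and
$$(q+q^{-1})C^\flat=\gamma^\flat+\frac{q^{-1}L_{23}L_{12}-qL_{12}L_{23}}{q-q^{-1}},$$
while the target identity becomes
$$\Omega^\flat=(q+q^{-1})^2-L_1^2-L_2^2-L_3^2-L_1L_2L_3L_{123}-L_{123}^2.$$

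The first step is to assemble the commutation data. Since $\Lambda$ is central in $\U$, each of $L_1,L_2,L_3$ is central in $\U^{\otimes 3}$; by Lemma \ref{lem:commu}, $L_{123}$ commutes with both $L_{12}$ and $L_{23}$. Consequently $L_{12}$ and $L_{23}$ each commute with every one of $\alpha^\flat,\beta^\flat,\gamma^\flat$, so the only non-commuting pair in the image of $\flat$ is $(L_{12},L_{23})$. This structural feature is what makes the calculation tractable.

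The second step is to reduce $\Omega$ inside $\triangle_q$ before applying $\flat$. The defining relation $qAB-q^{-1}BA=(q-q^{-1})\gamma-(q^2-q^{-2})C$ together with the centrality of $\gamma$ yields $qABC=q^{-1}BAC+(q-q^{-1})C\gamma-(q^2-q^{-2})C^2$, hence
$$\Omega=q^{-1}BAC+q^2A^2+q^{-2}B^2+q^{-2}C^2-qA\alpha-q^{-1}B\beta-q^{-1}C\gamma.$$
Substituting the formula for $C^\flat$ and using the commutation facts above to move $\gamma^\flat$ past $L_{12}$ and $L_{23}$, one finds that the contributions quadratic in $\gamma^\flat$ collapse to $-(\gamma^\flat)^2/(q+q^{-1})^2$. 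What remains of $\Omega^\flat$ is then a polynomial in the non-commuting pair $L_{12},L_{23}$ whose coefficients lie in the commutative subalgebra generated by $L_1,L_2,L_3,L_{123}$.

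The third step is the final expansion and matching. Using Lemmas \ref{lem:DeltaL} and \ref{lem:EF}, expand $L_{12}^2$, $L_{23}^2$, $L_{12}L_{23}$, and $L_{23}L_{12}$ in the $\F$-basis of $\U^{\otimes 3}$ provided by Lemma \ref{lem:basisfoldU}, decompose every contribution according to the $\Z^3$-grading, and compare with the target. The right-hand side lies entirely in degree $(0,0,0)$, so the verification splits into two tasks: (i) checking that all nonzero-degree contributions to $\Omega^\flat$ cancel among themselves, which is a voluminous but mechanical check paralleling the verification of (\ref{e:v1})--(\ref{e:v2}) in the proof of Theorem \ref{thm:embedd}; and (ii) matching the degree-$(0,0,0)$ part, which, after using (\ref{e:Lambda}) to express $EF$ and $FE$ as polynomials in $\Lambda$ and $K^{\pm 1}$, reduces to a polynomial identity among commuting monomials. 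The main obstacle is the delicate collapse in task (ii): the expanded $(\gamma^\flat)^2/(q+q^{-1})^2$ and the various $L_{12}^2,L_{23}^2$-terms on the degree-zero part must conspire to produce exactly $(q+q^{-1})^2-L_1^2-L_2^2-L_3^2-L_1L_2L_3L_{123}-L_{123}^2$, with the denominators $q-q^{-1}$ and $q+q^{-1}$ cancelling only after the full combination is assembled.
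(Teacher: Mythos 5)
Your preliminary reductions are correct: the identity $qABC=q^{-1}BAC+(q-q^{-1})C\gamma-(q^2-q^{-2})C^2$ does hold in $\triangle_q$, the resulting rewriting of $\Omega$ is right, and the terms quadratic in $\gamma^\flat$ do collapse to $-(\gamma^\flat)^2/(q+q^{-1})^2$ because $\gamma^\flat$ commutes with $A^\flat$, $B^\flat$, $C^\flat$. The overall method --- expand both sides into homogeneous components and compare them in the basis of Lemma \ref{lem:basisfoldU} --- is also exactly what the paper does.

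However, your final step rests on a false claim: the right-hand side of (\ref{e:Casimirflat}) does \emph{not} lie in degree $(0,0,0)$. The element $\Lambda$ lies in $U_0$, but $\Delta_2(\Lambda)$ is not homogeneous: by Lemma \ref{lem:DeltaL} it has nonzero components in the seven degrees $(\pm 1,0,\mp 1)$, $(\pm 1,\mp 1,0)$, $(0,\pm 1,\mp 1)$ and $(0,0,0)$. Hence $(\Lambda\otimes\Lambda\otimes\Lambda)\cdot\Delta_2(\Lambda)$ and $\Delta_2(\Lambda)^2$ contribute in many nonzero degrees; for instance the leading component of $\Delta_2(\Lambda)^2$ is $(q-q^{-1})^4E^2\otimes 1\otimes F^2$ in degree $(2,0,-2)$, which is nonzero by Lemma \ref{lem:PBW} and which no other term on the right-hand side can cancel. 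Consequently your task (i) --- showing that all nonzero-degree contributions to $\Omega^\flat$ cancel among themselves --- would fail if carried out: $\Omega^\flat$ genuinely has nonzero homogeneous components outside degree $(0,0,0)$, and these must be matched against the corresponding nonzero-degree components of the right-hand side of (\ref{e:Casimirflat}) rather than shown to vanish. Once task (i) is restated in that way, your argument reduces to the paper's component-by-component verification; but as written the plan does not establish the theorem.
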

\begin{proof}
By (\ref{e:Casimirtriangle}) the image of $\Omega$ under $\flat$ is
\begin{gather}\label{e:Casimirflat2}
q A^\flat B^\flat C^\flat
+q^2 (A^\flat)^2
+q^{-2}(B^\flat)^2
+q^2(C^\flat)^2
-qA^\flat\alpha^\flat
-q^{-1}B^\flat\beta^\flat
-qC^\flat\gamma^\flat.
\end{gather}
Similar to the idea of the verification for (\ref{e:v1}) and (\ref{e:v2}), we express each homogeneous component of (\ref{e:Casimirflat}), (\ref{e:Casimirflat2}) as a linear combination of the corresponding basis given in Lemma \ref{lem:basisfoldU}. After then we can check that the corresponding homogeneous components of (\ref{e:Casimirflat}), (\ref{e:Casimirflat2}) are equal to each other. The theorem follows.
\end{proof}

We end this subsection with a corollary of Theorem \ref{thm:embedd}.

\begin{cor}\label{cor:commu}
The image of $\flat$ is contained in the centralizer of $\Delta_2(\U)$ in $\U^{\otimes 3}$.
\end{cor}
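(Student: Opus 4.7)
The plan rests on two observations. First, the centralizer $Z$ of $\Delta_2(\U)$ in $\U^{\otimes 3}$ is an $\F$-subalgebra; since $\triangle_q$ is generated by $A$, $B$, $C$, the image $\flat(\triangle_q)$ is generated by $A^\flat$, $B^\flat$, $C^\flat$, so it suffices to verify that each of these three elements lies in $Z$. Second, any element of $\U^{\otimes 3}$ of the form $x\otimes 1\otimes 1$, $1\otimes x\otimes 1$, or $1\otimes 1\otimes x$ with $x$ central in $\U$ is central in $\U^{\otimes 3}$, and $\Delta_2(\Lambda)$ lies in $Z$ because $\Lambda$ is central in $\U$ and $\Delta_2$ is an $\F$-algebra homomorphism.

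For $A^\flat=\Delta(\Lambda)\otimes 1$ and $B^\flat=1\otimes \Delta(\Lambda)$ the containment in $Z$ is immediate from Lemma \ref{lem:commu} applied with $n=2$.

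For $C^\flat$ I would decompose the defining formula into three summands: (i) $(q+q^{-1})^{-1}\,\Lambda\otimes 1\otimes \Lambda$, which is central in $\U^{\otimes 3}$; (ii) $(q+q^{-1})^{-1}(1\otimes \Lambda\otimes 1)\cdot \Delta_2(\Lambda)$, a product whose first factor is central in $\U^{\otimes 3}$ and whose second factor lies in $Z$ by the second observation above; and (iii) $(q^2-q^{-2})^{-1}\bigl(q^{-1}B^\flat A^\flat-qA^\flat B^\flat\bigr)$, a noncommutative polynomial in $A^\flat$ and $B^\flat$, which lies in $Z$ because $A^\flat$ and $B^\flat$ do. Closure of $Z$ under sums and products then yields $C^\flat\in Z$.

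No serious obstacle is anticipated: the entire content of the corollary is packaged in Lemma \ref{lem:commu} together with the centrality of $\Lambda$ in $\U$, and nothing beyond reading off the explicit formulas for $A^\flat$, $B^\flat$, $C^\flat$ recorded in Theorem \ref{thm:embedd} is required.
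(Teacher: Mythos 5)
Your proposal is correct and follows essentially the same route as the paper: reduce to the generators $A^\flat$, $B^\flat$, $C^\flat$, handle the first two via Lemma \ref{lem:commu}, and handle $C^\flat$ by splitting its defining formula into central factors, $\Delta_2(\Lambda)$, and a polynomial in $A^\flat$, $B^\flat$. Your version is in fact slightly more explicit than the paper's (which compresses the $C^\flat$ step into one clause), and your invocation of Lemma \ref{lem:commu} with $n=2$ is the correct index for statements about the centralizer of $\Delta_2(\U)$ in $\U^{\otimes 3}$.
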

\begin{proof}
By Lemma \ref{lem:commu} with $n=1$ the elements $A^\flat=\Delta(\Lambda)\otimes 1$ and $B^\flat=1\otimes \Delta(\Lambda)$ are in the centralizer of $\Delta_2(\U)$ in $\U^{\otimes 3}$. Since $\Lambda$ is in the center of $\U$ the element $C^\flat$ is in the centralizer of $\Delta_2(\U)$ in $\U^{\otimes 3}$ as well. Since the $\F$-algebra $\triangle_q$ is generated by $A$, $B$, $C$ the corollary follows.
\end{proof}

\subsection{The injectivity of the homomorphism $\flat$}\label{s:inj}

To see the injectivity of $\flat$, we equip the ring $\Z^3$ with the lexicographical order. Let $u$ denote a nonzero element of $\U\otimes \U\otimes \U$. We call a nonzero homogeneous component of $u$ the {\it leading component} of $u$ if each homogeneous component of $u$ of higher degree is zero. The {\it degree} of $u$ is meant to be the degree of the leading component of $u$.

\begin{lem}\label{lem:leading}
For all $i,j,k,r,s,t\in \N$ the following {\rm (i)}, {\rm (ii)} hold:
\begin{enumerate}
\item The degree of $(A^\flat)^{i}
(B^\flat)^{j}
(C^\flat)^{k}
(\alpha^\flat)^{r}
(\beta^\flat)^{s}
(\gamma^\flat)^{t}$ 
is 
$$
(i+k+r+s+t,j-i,-j-k-r-s-t).
$$

\item The leading component of 
$(A^\flat)^{i}
(B^\flat)^{j}
(C^\flat)^{k}
(\alpha^\flat)^{r}
(\beta^\flat)^{s}
(\gamma^\flat)^{t}$ 
is 
\begin{gather*}
\phi(i,j,k,r,s,t)
\times 
\left\{
\begin{array}{ll}
E^{i+k+r+s+t}\otimes E^{j-i}\otimes F^{-j-k-r-s-t} 
\qquad 
&\hbox{if $i\leq j$},
\\
E^{i+k+r+s+t}\otimes F^{i-j}\otimes F^{-j-k-r-s-t} 
\qquad 
&\hbox{if $i> j$},
\end{array}
\right.
\end{gather*}
where $\phi(i,j,k,r,s,t)$ is equal to 
$q^{2(i-j)k}(q-q^{-1})^{2(\max\{i,j\}+k+r+s+t)}$ times 
\begin{align*}\label{e:leading}
\Lambda^s
\otimes
\Lambda^{t} K^{k}
\prod_{h=1}^{\min\{i,j\}}
(
\Lambda
-q^{1-2(h-i)}K
-q^{2(h-i)-1}K^{-1}
)
\otimes
\Lambda^r.
\end{align*}
\end{enumerate}
\end{lem}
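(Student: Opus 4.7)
The plan is to identify the leading component (the homogeneous component of maximal degree in the lexicographic order on $\Z^3$) of each of $A^\flat, B^\flat, C^\flat, \alpha^\flat, \beta^\flat, \gamma^\flat$ from Lemma \ref{lem:ABCflat}, compute the product of the six leading components raised to the indicated powers, and identify the result with the claimed formula. The key observation is that in the $\Z^3$-graded algebra $\U^{\otimes 3}$, if $u, v$ have leading components $u_d, v_e$ with $u_d v_e \neq 0$, then $uv$ has leading component $u_d v_e$ of degree $d+e$; iterating, the leading component of a long product is the product of leading components whenever the latter is nonzero. Thus (i) and (ii) can both be read off from a single product computation.

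From Lemma \ref{lem:ABCflat} the six leading components are $(q-q^{-1})^2 E\otimes F\otimes 1$, $(q-q^{-1})^2\cdot 1\otimes E\otimes F$, $(q-q^{-1})^2 E\otimes K\otimes F$, $(q-q^{-1})^2 E\otimes 1\otimes \Lambda F$, $(q-q^{-1})^2\Lambda E\otimes 1\otimes F$, $(q-q^{-1})^2 E\otimes \Lambda\otimes F$, of respective degrees $(1,-1,0), (0,1,-1), (1,0,-1), (1,0,-1), (1,0,-1), (1,0,-1)$. Each is a pure tensor whose three entries commute with themselves, so its $n$-th power is obtained entrywise; summing the leading degrees with multiplicities $i, j, k, r, s, t$ yields the degree in (i). Multiplying the six powers slot by slot and collecting the overall scalar $(q-q^{-1})^{2(i+j+k+r+s+t)}$, the first slot simplifies to $\Lambda^s E^{i+k+r+s+t}$ and the third to $\Lambda^r F^{j+k+r+s+t}$, using only the centrality of $\Lambda$.

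The middle slot yields $F^i E^j K^k \Lambda^t$. Here I apply Lemma \ref{lem:EF}(ii) in the two cases: $F^{\min\{i,j\}} E^{\min\{i,j\}}$ equals $(q-q^{-1})^{-2\min\{i,j\}}\prod_{h=1}^{\min\{i,j\}}(\Lambda - q^{2h-1}K - q^{1-2h}K^{-1})$. When $i \le j$ this leaves $E^{j-i}$ on the right, which commutes past $K^k$ via $EK = q^{-2}KE$ to produce the scalar $q^{-2k(j-i)} = q^{2(i-j)k}$. When $i > j$ the residual $F^{i-j}$ lies on the left; commuting it past each of the $j$ central factors using $F^{i-j}K^{\pm 1} = q^{\pm 2(i-j)}K^{\pm 1}F^{i-j}$ shifts the $q$-exponents in each factor by $\pm 2(i-j)$, and a further commutation of $F^{i-j}$ past $K^k$ yields the same scalar $q^{2(i-j)k}$. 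A reindexing $h \mapsto \min\{i,j\}-h+1$ rewrites the product in either case as $\prod_{h=1}^{\min\{i,j\}}(\Lambda - q^{1-2(h-i)}K - q^{2(h-i)-1}K^{-1})$, matching (ii); simultaneously the $(q-q^{-1})$-exponent collapses from $2(i+j+k+r+s+t)$ to $2(\max\{i,j\}+k+r+s+t)$, since $i+j-\min\{i,j\} = \max\{i,j\}$.

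The resulting element is a nonzero scalar multiple of a basis element in the sense of Lemma \ref{lem:basisfoldU}, which confirms that the product of the six leading components is nonzero and is therefore the leading component of the full product. The hardest part is the bookkeeping in the case $i > j$: one must track the exponent shifts induced by commuting $F^{i-j}$ through the central product and then apply the final reindexing to obtain the uniform formula of (ii); the case $i \le j$ reduces almost immediately after a single commutation past $K^k$.
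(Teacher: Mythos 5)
Your proof is correct and follows essentially the same route as the paper's own (which simply reads the degrees off Lemma \ref{lem:ABCflat} for part (i) and invokes Lemmas \ref{lem:EF}(ii) and \ref{lem:ABCflat} for part (ii)): you identify the top homogeneous component of each factor, multiply slot by slot using that the lexicographic order is compatible with the grading, and normalize the middle slot $F^iE^jK^k\Lambda^t$ via Lemma \ref{lem:EF}(ii), the $K$-commutation relations, and the reindexing $h\mapsto\min\{i,j\}-h+1$, all of which checks out. The only quibble is your closing remark: the product of the leading components is a nonzero $\F$-linear combination of the basis elements of Lemma \ref{lem:basisfoldU} (the factor $\prod_{h}(\Lambda-q^{1-2(h-i)}K-q^{2(h-i)-1}K^{-1})$ expands into several basis terms), not a scalar multiple of a single one, but it is still visibly nonzero, which is all the argument requires.
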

\begin{proof}
Observe from Lemma \ref{lem:ABCflat} that the degrees of $A^\flat$, $B^\flat$, $C^\flat$, $\alpha^\flat$, $\beta^\flat$, $\gamma^\flat$ are
\begin{gather*}
(1,-1,0),
\qquad
(0,1,-1),
\qquad
(1,0,-1),
\\
(1,0,-1),
\qquad
(1,0,-1),
\qquad
(1,0,-1),
\end{gather*}
respectively. Statement (i) is immediate from the observation. To obtain (ii) one may apply Lemma \ref{lem:EF}(ii) and Lemma \ref{lem:ABCflat}.
\end{proof}

We are ready to prove the injectivity of $\flat$.

\begin{thm}\label{thm:injective}
The homomorphism $\flat$ is injective.
\end{thm}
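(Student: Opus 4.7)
My plan is to prove injectivity of $\flat$ by a leading-component argument on the $\Z^3$-graded algebra $\U^{\otimes 3}$. Suppose for contradiction that $u = \sum c_{i,j,k,r,s,t}\, A^i B^j C^k \alpha^r \beta^s \gamma^t \in \ker \flat$ is nonzero, where the decomposition uses the PBW basis of Lemma~\ref{lem:PBWaw} and the sum is over finitely many nonzero coefficients. By Lemma~\ref{lem:leading}(i), the image of each PBW monomial under $\flat$ has $\Z^3$-degree $(i+k+r+s+t,\, j-i,\, -j-k-r-s-t)$; let $(n_1,n_2,n_3)$ be the lex-maximum such degree among tuples with $c_{i,j,k,r,s,t}\ne 0$. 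Then the degree-$(n_1,n_2,n_3)$ component of $\flat(u)=0$ must vanish and equals the sum of the leading components of the contributing tuples (tuples with strictly smaller leading degrees contribute nothing at this degree).

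Assume without loss of generality $n_2 \ge 0$, so the contributing tuples satisfy $j = i+n_2$ and $i+k+r+s+t = n_1$. By Lemma~\ref{lem:leading}(ii), and identifying $P_i := \prod_{h=1}^{i}(\Lambda - q^{1-2(h-i)}K - q^{2(h-i)-1}K^{-1})$ with $(q-q^{-1})^{2i}F^i E^i$ via Lemma~\ref{lem:EF}(ii), each leading component is a nonzero scalar multiple of $(\Lambda^s E^{n_1}) \otimes (\Lambda^t K^k P_i \cdot E^{n_2}) \otimes (\Lambda^r F^{-n_3})$. Expanding everything in the basis of $U_{n_1}\otimes U_{n_2}\otimes U_{n_3}$ from Lemma~\ref{lem:basisfoldU}, the first and third slots become the basis elements $\Lambda^s K^0 E^{n_1}$ and $\Lambda^r K^0 F^{-n_3}$, which are distinct for distinct $(s,r)$. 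Separating by $(s,r)$ reduces the vanishing to the statement that for every pair with $\sigma := n_1 - s - r \ge 0$,
\[
\sum_{\substack{(i,k,t) \in \N^3 \\ i+k+t = \sigma}} d_{i,k,t}\, \Lambda^t K^k P_i \;=\; 0 \quad \text{in } U_0,
\]
where each $d_{i,k,t}$ is a nonzero scalar multiple of $c_{i,\,i+n_2,\,k,\,r,\,s,\,t}$.

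The crux, and the main obstacle, is to show that $\{\Lambda^t K^k P_i\}_{(i,k,t)\in\N^3,\,i+k+t=\sigma}$ is $\F$-linearly independent in $U_0$. Expanding each such element in the basis $\{\Lambda^a K^b\}_{a\in\N,\,b\in\Z}$ of $U_0$ from Lemma~\ref{lem:basisU}(i), its support lies in the triangular region $\{(a,b) : t \le a \le t+i,\; |b-k| \le t+i-a\}$ (with parity restriction $b - k \equiv t+i-a \pmod 2$), the ``bottom-left corner'' $(t,\,k-i)$ is distinct for distinct tuples in the fiber $i+k+t=\sigma$ (since $t=t'$ together with $k-i=k'-i'$ and $i+k+t=i'+k'+t'$ force $(i,k,t)=(i',k',t')$), and the coefficient of $\Lambda^t K^{k-i}$ in $\Lambda^t K^k P_i$ equals $(-1)^i q^{-i^2}$, which is nonzero. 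I then plan to process the tuples in increasing lex order of the signature $(t,\,k-i)$: the constraint $i+k+t=\sigma$ together with the support inequalities $t\le t_0$ and $|k_0-i_0-k|\le t_0-t$ forces any non-self co-contributor at the corner $(t_0,\,k_0-i_0)$ to satisfy $i\ge 2i_0$, and a short case analysis (distinguishing $t<t_0$ from $t=t_0$) shows its signature is strictly lex-smaller than $(t_0,\,k_0-i_0)$, hence already processed. This triangular structure forces every $d_{i,k,t}$ to vanish, contradicting the choice of $(n_1,n_2,n_3)$; the careful verification of the triangularity in the last step is the main technical difficulty.
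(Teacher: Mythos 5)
Your proposal is correct and follows essentially the same route as the paper: expand the kernel element in the PBW basis of Lemma~\ref{lem:PBWaw}, isolate the lex-maximal degree via Lemma~\ref{lem:leading}, and derive a contradiction by extracting a ``corner'' coefficient of the middle tensor factor in the basis of Lemma~\ref{lem:basisfoldU} --- the paper does this in one shot by choosing the tuple minimizing $k-\min\{i,j\}$, while you run a triangularity induction on the signature $(t,k-i)$. One small slip: the support inequality at the corner should read $|k_0-i_0-k|\le t+i-t_0$ (not $\le t_0-t$), which yields $i\ge i_0+(t_0-t)$ rather than $i\ge 2i_0$; your case analysis nevertheless goes through, since $t<t_0$ gives a strictly smaller first coordinate of the signature, and $t=t_0$ together with distinctness and $i+k+t=i_0+k_0+t_0$ forces $i>i_0$ and hence $k-i<k_0-i_0$.
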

\begin{proof}
Suppose on the contrary that there exists a nonzero element $I$ in the kernel of $\flat$.
For any $i,j,k,r,s,t\in \N$ let $c(i,j,k,r,s,t)$ denote the coefficient of $A^{i} B^{j} C^{k} \alpha^{r} \beta^{s} \gamma^{t}$ in $I$ with respect to the basis of $\triangle_q$ given in Lemma~\ref{lem:PBWaw}. Let $S$ denote the set consisting of  all $6$-tuples $(i,j,k,r,s,t)$ of nonnegative integers with $c(i,j,k,r,s,t)\not=0$. Thus
\begin{gather*}
\sum_{(i,j,k,r,s,t)\in S}
c(i,j,k,r,s,t)
A^{i} B^{j} C^{k} \alpha^{r} \beta^{s} \gamma^{t}=I.
\end{gather*}
Applying $\flat$ to either side of the equality, we obtain that
\begin{gather}\label{e:Isharp}
\sum_{(i,j,k,r,s,t)\in S}
c(i,j,k,r,s,t)
(A^\flat)^{i}
(B^\flat)^{j}
(C^\flat)^{k}
(\alpha^\flat)^{r}
(\beta^\flat)^{s}
(\gamma^\flat)^{t}
=0.
\end{gather}
Furthermore, for all $m,n,p\in \Z$ let $S(m,n,p)$ denote the set consisting of those $(i,j,k,r,s,t)\in S$ with
$$
(m,n,p)=(i+k+r+s+t,j-i,-j-k-r-s-t).
$$
Equation (\ref{e:Isharp}) can be written as
\begin{gather}\label{e:Isharp2}
\sum_{m,n,p\in \Z}
\sum_{(i,j,k,r,s,t)\in S(m,n,p)}
c(i,j,k,r,s,t)
(A^\flat)^{i}
(B^\flat)^{j}
(C^\flat)^{k}
(\alpha^\flat)^{r}
(\beta^\flat)^{s}
(\gamma^\flat)^{t}
=0.
\end{gather}
By Lemma \ref{lem:leading}(i) each summand in the inner summation of (\ref{e:Isharp2}) is of degree $(m,n,p)$.
Since $I\not=0$ the finite set $S$ is nonempty.
Hence we may define 
$$
(M,N,P)=\max\{(m,n,p)\in \Z^3\,|\, S(m,n,p)\not=\emptyset\}.
$$
Recall from Lemma \ref{lem:leading}(ii) the elements $\phi(i,j,k,r,s,t)\in U_0\otimes U_0\otimes U_0$ for all $i,j,k,r,s,t\in \N$.
By Lemma \ref{lem:leading}(ii) and the maximality of $(M,N,P)$, equation (\ref{e:Isharp2}) implies that
\begin{gather}\label{e:zero}
\sum_
{
\substack{
(i,j,k,r,s,t)
\in
S(M,N,P)}
}
c(i,j,k,r,s,t)
\cdot
\phi(i,j,k,r,s,t)
=0.
\end{gather}

Now, fix an element $(i,j,k,r,s,t)\in S(M,N,P)$ with minimum value at $k-\min\{i,j\}$. Observe that the term $(i,j,k,r,s,t)$ contributes to the coefficient of
\begin{gather*}
\Lambda^{s}
\otimes
\Lambda^{t}
K^{k-\min\{i,j\}}
\otimes
\Lambda^{r}
\end{gather*}
in the left-hand side of (\ref{e:zero}). Suppose that $(i',j',k',r',s',t')\in S(M,N,P)$ also contributes to the coefficient. Then
\begin{align*}
&i+k+r+s+t=i'+k'+r'+s'+t'=M,
\\
&j-i=j'-i'=N,
\\
&j+k+r+s+t=j'+k'+r'+s'+t'=-P,
\\
&r=r', \qquad
s=s'.
\end{align*}
By the choice of $(i,j,k,r,s,t)$ we have
$k-\min\{i,j\}=k'-\min\{i',j'\}$ and $t=t'$.
Solving these equations yields that $(i',j',k',r',s',t')=(i,j,k,r,s,t)$. Therefore the coefficient of $\Lambda^{s}
\otimes
\Lambda^{t}
K^{k-\min\{i,j\}}
\otimes
\Lambda^{r}$ in the left-hand side of (\ref{e:zero}) is equal to $c(i,j,k,r,s,t)$ times 
$$
(-1)^{\min\{i,j\}}
q^{2(i-j)k+\min\{i,j\}(\min\{i,j\}-2i)}
(q-q^{-1})^{2(\max\{i,j\}+k+r+s+t)}.
$$
Since the coefficient is nonzero, the equation (\ref{e:zero})
gives a nontrivial $\F[K^{\pm 1}]$-algebraic dependence relation for
$$
\Lambda\otimes 1\otimes 1,
\qquad
1\otimes \Lambda\otimes 1,
\qquad
1\otimes 1\otimes \Lambda.
$$
This leads to a contradiction to Lemma \ref{lem:basisfoldU}. The theorem follows.
\end{proof}

We end this subsection with a corollary of Theorem \ref{thm:injective}.

\begin{cor}\label{cor:Hopfauto}
Let $\phi$ denote an $\F$-Hopf algebra automorphism of $\U$. For any $\F$-algebra automorphism $\varphi$ of $\triangle_q$ the following {\rm (i)}, {\rm (ii)} are equivalent:
\begin{enumerate}
\item $\flat\circ\varphi=(\phi\otimes \phi\otimes \phi)\circ \flat$. 

\item $\varphi=1$. 
\end{enumerate}
\end{cor}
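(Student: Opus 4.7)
The plan is to reduce both implications to the single identity $(\phi \otimes \phi \otimes \phi) \circ \flat = \flat$. Granted this, the direction (ii) $\Rightarrow$ (i) is immediate since $\flat \circ 1 = \flat$, and (i) $\Rightarrow$ (ii) follows by combining the hypothesis with the identity to obtain $\flat \circ \varphi = \flat$, whereupon the injectivity of $\flat$ from Theorem \ref{thm:injective} forces $\varphi(x) = x$ for every $x \in \triangle_q$, so that $\varphi = 1$.

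The heart of the matter is to show that $\phi(\Lambda) = \Lambda$. I would argue this by classifying the $\F$-Hopf algebra automorphisms of $\U$: since $\phi(K)$ must be group-like, $\phi(K) = K^{\pm 1}$; a skew-primitive analysis of $\phi(E)$ combined with the eigenspace condition $K\phi(E)K^{-1} = q^{\pm 2}\phi(E)$ rules out the case $\phi(K) = K^{-1}$ and forces $\phi(E) = cE$ for some nonzero $c \in \F$; the commutator relation $EF - FE = (K - K^{-1})/(q - q^{-1})$ then gives $\phi(F) = c^{-1}F$, under which the expression $\Lambda = (q-q^{-1})^2 FE + qK + q^{-1}K^{-1}$ is manifestly invariant.

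Granting $\phi(\Lambda) = \Lambda$, I would next use that $\phi$ is a coalgebra homomorphism to establish $\phi^{\otimes(n+1)} \circ \Delta_n = \Delta_n \circ \phi$ for all $n \in \N$, by a short induction based on the recursion $\Delta_n = (1^{\otimes(n-1)} \otimes \Delta) \circ \Delta_{n-1}$. Applied to $\Lambda$, this yields $\phi^{\otimes(n+1)}(\Delta_n(\Lambda)) = \Delta_n(\Lambda)$. Inspecting the formulas defining $\flat$ in Theorem \ref{thm:embedd}, each of $A^\flat, B^\flat, C^\flat, \alpha^\flat, \beta^\flat, \gamma^\flat$ is an $\F$-algebra expression in the elements $\Delta_n(\Lambda)$ for $n \in \{0, 1, 2\}$, and hence is fixed by $\phi \otimes \phi \otimes \phi$. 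Since $A, B, C, \alpha, \beta, \gamma$ generate $\triangle_q$ as an $\F$-algebra, the desired identity follows. The main obstacle is the preliminary classification step establishing $\phi(\Lambda) = \Lambda$; everything else reduces to routine bookkeeping combined with the injectivity of $\flat$.
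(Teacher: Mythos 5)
Your proposal is correct and follows essentially the same route as the paper: reduce everything to showing $(\phi\otimes\phi\otimes\phi)\circ\flat=\flat$, which rests on $\phi(\Lambda)=\Lambda$ together with the coalgebra-homomorphism property of $\phi$, and then invoke Theorem \ref{thm:injective} for the direction (i) $\Rightarrow$ (ii). The only difference is that the paper simply cites the known classification $\phi(E)=\alpha E$, $\phi(F)=\alpha^{-1}F$, $\phi(K)=K$ of Hopf algebra automorphisms of $\U$ (Klimyk--Schm\"{u}dgen, \S3.1.2, Proposition 6), whereas you sketch a proof of that classification via group-like and skew-primitive elements; both yield the same key fact $\phi(\Lambda)=\Lambda$.
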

\begin{proof}
By \cite[\S3.1.2, Proposition 6]{Klimyk} there exists a nonzero scalar $\alpha\in \F$  satisfying 
\begin{gather*}
\phi(E)=\alpha E,
\qquad 
\phi(F)=\alpha^{-1} F,
\qquad 
\phi(K)=K.
\end{gather*}
By (\ref{e:Lambda}) the element $\Lambda$ is invariant under $\phi$. Hence $1\otimes 1\otimes \Lambda$, $\Lambda\otimes 1\otimes 1$, $1\otimes \Lambda\otimes 1$ are invariant under $\phi\otimes \phi\otimes \phi$. By (\ref{e:coalghom}) the elements $\Delta(\Lambda)\otimes 1$, $1\otimes \Delta(\Lambda)$, $\Delta_2(\Lambda)$ are invariant under $\phi\otimes\phi \otimes \phi$. Therefore the elements $A^\flat$, $B^\flat$, $C^\flat$ are invariant under $\phi\otimes \phi\otimes \phi$ by Theorem \ref{thm:embedd}. In other words, the homomorphism $(\phi\otimes \phi\otimes \phi)\circ \flat$ sends $A$, $B$, $C$ to $A^\flat$, $B^\flat$, $C^\flat$ respectively.

(ii) $\Rightarrow$ (i): By the above comment the two $\F$-algebra homomorphisms $\flat$ and $(\phi\otimes \phi\otimes \phi)\circ \flat$ agree on the generators $A$, $B$, $C$ of $\triangle_q$. Therefore $\varphi=1$ satisfies (i).

(i) $\Rightarrow$ (ii): By Theorem \ref{thm:injective} any automorphism $\varphi$ of $\triangle_q$ with (i) must sends $A$, $B$, $C$ to $A$, $B$, $C$ respectively. Therefore $\varphi=1$. 
\end{proof}

\subsection{Hopf $*$-algebras of $\U$ and algebra involutions of $\triangle_q$}\label{s:hopf&AW}


Throughout this subsection, we assume that the underlying field is the complex number field $\C$ and let $-:\C\to \C$ denote the complex conjugation.

Recall that an {\it involution} $*$ is a function with $*\circ *=1$. A vector space $V$ over $\C$ with an involution $*:V\to V$ is called a {\it $*$-vector space} if
\begin{gather*}
(\lambda u+\mu v)^*=\overline{\lambda} u^*+\overline{\mu} v^*
\end{gather*}
for all $u,v\in V$ and $\lambda,\mu\in \C$. An algebra $\A$ over $\C$ with an involution $*:\A \to \A$ is called a {\it $*$-algebra} if $\A$ is a $*$-vector space and
$$
(xy)^*=y^* x^*
$$
for all $x,y\in\A$. 
Here $*$ is called the {\it algebra involution} of a $*$-algebra $\A$. 
A coalgebra $\A$ over $\C$ with an involution $*$ is called a {\it $*$-coalgebra} if the coalgebra $\A$ is a $*$-vector space and the comultiplication $\Delta$ and the counit $\epsilon$ of $\A$ satisfy
\begin{gather}
(*\otimes *)\circ \Delta=\Delta\circ *,
\label{e:Delta*}
\\
\epsilon\circ *=-\circ\epsilon.
\notag
\end{gather}
A bialgebra over $\C$ with an involution $*$ is called a {\it $*$-bialgebra} if the bialgebra is a $*$-algebra and a $*$-coalgebra. A Hopf algebra over $\C$ with an involution $*$ is called a {\it Hopf $*$-algebra} if the Hopf algebra is a $*$-bialgebra. 
A Hopf $*$-algebra $\A$ is said to be {\it equivalent} to a Hopf $*'$-algebra $\A'$ if there exists a Hopf algebra isomorphism $\phi:\A\to \A'$ such that
\begin{gather*}
\phi\circ *=*'\circ \phi.
\end{gather*}
The equivalence relations on $*$-algebras, $*$-coalgebras and $*$-bialgebras are defined in similar ways.

Let $\R$ denote the real number field.
Recall from \cite[\S3.1.4]{Klimyk} that the Hopf algebra of $\U$ has the following Hopf $*$-algebra structures up to equivalence:

\begin{enumerate}
[topsep=0.5em,itemsep=0.3em,partopsep=0.5em]

\item[(R1)]  $E^*=KF$, $F^*=EK^{-1}$, $K^*=K$
for $q\in \R$.

\item[(R2)] $E^*=-KF$, $F^*=-EK^{-1}$, $K^*=K$
for $q\in \R$.

\item[(R3)] $E^*=E$, $F^*=F$, $K^*=K$ for $|q|=1$.

\item[(R4)] $E^*=\sqrt{-1} KF$, $F^*=\sqrt{-1} EK^{-1}$, $K^*=K$ for
$q\in \sqrt{-1}\R$.

\item[(R5)] $E^*=-\sqrt{-1} KF$, $F^*=-\sqrt{-1} EK^{-1}$, $K^*=K$ for
$q\in \sqrt{-1}\R$.

\end{enumerate}
\setlist[enumerate,1]{leftmargin=2em}
The cases (R1)--(R3) correspond to the real forms $\mathfrak{su}_2$, $\mathfrak{su}_{1,1}$, $\mathfrak{sl}_2(\R)$ of  $\mathfrak{sl}_2(\C)$ respectively. Hence these Hopf $*$-algebras of $\U$ are denoted by $U_q(\mathfrak{su}_2)$, $U_q(\mathfrak{su}_{1,1})$, $U_q(\mathfrak{sl}_2(\R))$ respectively. The cases (R4), (R5) have no classical counterparts.

\begin{lem}\label{lem:Lambda*}
\begin{enumerate}
\item $\Lambda^*=\Lambda$ for each algebra involution $*$ of $\U$ given in {\rm (R1)--(R3)}.

\item $\Lambda^*=-\Lambda$ for each algebra involution $*$ of $\U$ given in {\rm (R4)}, {\rm (R5)}.
\end{enumerate}
\end{lem}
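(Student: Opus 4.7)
The plan is to verify both parts by a direct computation starting from the formula
\[
\Lambda = (q-q^{-1})^{2} EF + q^{-1} K + q K^{-1}
\]
and using the fact that in each case $*$ is an anti-multiplicative involution that is conjugate-linear over $\C$. Since $\Lambda$ is a three-term sum, it is enough to apply $*$ to each of $EF$, $q^{-1}K$, $qK^{-1}$ in turn, using $(EF)^{*} = F^{*} E^{*}$, $K^{*}=K$ in every case, and the prescribed values of $\bar q$ in each case (namely $\bar q = q$ in (R1), (R2); $\bar q = q^{-1}$ in (R3); $\bar q = -q$ in (R4), (R5)).

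For (i), in cases (R1) and (R2) one has $(EF)^{*} = F^{*}E^{*} = (\pm EK^{-1})(\pm KF) = EF$, the sign in (R2) squaring to $+1$ and the middle $K^{-1}K$ cancelling; since $q \in \R$ the scalar $(q-q^{-1})^{2}$, $q^{-1}$ and $q$ are all fixed by conjugation, hence $\Lambda^{*} = \Lambda$. In case (R3) one instead obtains $(EF)^{*} = FE$, and because $|q|=1$ gives $\bar q = q^{-1}$, the scalar $(q-q^{-1})^{2}$ is still fixed while the coefficients of $K$ and $K^{-1}$ swap; the result is exactly the second expression
\[
(q-q^{-1})^{2} FE + qK + q^{-1}K^{-1}
\]
for $\Lambda$ recorded in (\ref{e:Lambda}), so again $\Lambda^{*} = \Lambda$.

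For (ii), in both (R4) and (R5) the anti-multiplicativity yields $(EF)^{*} = (\pm\sqrt{-1})^{2}\,EK^{-1}KF = -EF$, since $(\sqrt{-1})^{2} = (-\sqrt{-1})^{2} = -1$. Moreover $\bar q = -q$ forces $(q^{-1}K)^{*} = -q^{-1}K$ and $(qK^{-1})^{*} = -qK^{-1}$, while $\overline{(q-q^{-1})^{2}} = (-q+q^{-1})^{2} = (q-q^{-1})^{2}$. Thus every one of the three summands of $\Lambda$ acquires a common factor of $-1$, which establishes $\Lambda^{*} = -\Lambda$.

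There is no genuine obstacle here; the only care needed is the bookkeeping of the conjugation $\bar q$ in front of the scalar coefficients, coordinated with the sign produced by $F^{*}E^{*}$. Once one keeps track of those two ingredients the whole lemma reduces to five short calculations, one per case.
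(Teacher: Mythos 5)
Your proof is correct and takes essentially the same route as the paper, which simply instructs the reader to apply each involution to (\ref{e:Lambda}) and simplify using the defining relations; you have carried out that verification explicitly, including the key observations that $(EF)^*=EF$ in (R1)--(R2), that in (R3) the first expression for $\Lambda$ is carried onto the second expression in (\ref{e:Lambda}), and that in (R4)--(R5) all three summands pick up a common sign.
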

\begin{proof}
To see (i), (ii) one may apply each algebra involution $*$ of $\U$ given in (R1)--(R5) to (\ref{e:Lambda}) and simplify the resulting equation by using the defining relations of $\U$.
\end{proof}

\begin{thm}\label{thm:dag}
For any Hopf $*$-algebra of $\U$
there exists a unique algebra involution $\dag:\triangle_q\to \triangle_q$ such that
\begin{gather}\label{e:dag}
\flat\circ \dag
=(*\otimes *\otimes *)\circ \flat.
\end{gather}
\end{thm}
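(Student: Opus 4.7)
The plan is to exploit the injectivity of $\flat$ (Theorem \ref{thm:injective}) by transporting the anti-involution $*\otimes *\otimes *$ from $\flat(\triangle_q)$ back to $\triangle_q$. Concretely, if one can verify that $(*\otimes *\otimes *)(\flat(\triangle_q))\subseteq \flat(\triangle_q)$, then, since $\flat:\triangle_q\to\flat(\triangle_q)$ is an algebra isomorphism, the map $\dag:=\flat^{-1}\circ(*\otimes *\otimes *)\circ \flat$ is automatically a conjugate-linear anti-multiplicative involution of $\triangle_q$ making (\ref{e:dag}) commute. Uniqueness is immediate: since $A,B,C$ generate $\triangle_q$ and $\flat$ is injective, (\ref{e:dag}) determines $\dag$ on these generators, and anti-multiplicativity together with conjugate-linearity then determines $\dag$ throughout $\triangle_q$.

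Since $*\otimes *\otimes *$ is anti-multiplicative and $\flat(\triangle_q)$ is generated as an algebra by $A^\flat$, $B^\flat$, $C^\flat$, the required stability reduces to showing that $(A^\flat)^*$, $(B^\flat)^*$, $(C^\flat)^*$ lie in $\flat(\triangle_q)$. By Lemma \ref{lem:Lambda*}, $\Lambda^*=\epsilon\Lambda$ with $\epsilon=+1$ in cases (R1)--(R3) and $\epsilon=-1$ in (R4), (R5). Using (\ref{e:Delta*}), together with its iterated consequence $(*\otimes *\otimes *)\circ \Delta_2=\Delta_2\circ *$, one immediately obtains $(A^\flat)^*=\epsilon A^\flat$ and $(B^\flat)^*=\epsilon B^\flat$; moreover, since each of $\alpha^\flat,\beta^\flat,\gamma^\flat$ is a sum of terms involving the product of two $\Lambda$-factors (with the other tensor slots containing central factors such as $1\otimes 1\otimes \Lambda$ etc.), the sign $\epsilon^2=1$ cancels in every term and one finds $(\alpha^\flat)^*=\alpha^\flat$, $(\beta^\flat)^*=\beta^\flat$, $(\gamma^\flat)^*=\gamma^\flat$.

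For $(C^\flat)^*$, the strategy is to circumvent the unwieldy explicit form from Theorem \ref{thm:embedd} by using that $\flat$ is an algebra homomorphism, so the defining relation of $\triangle_q$ transports to
\begin{equation*}
C^\flat=\frac{\gamma^\flat}{q+q^{-1}}-\frac{qA^\flat B^\flat-q^{-1}B^\flat A^\flat}{q^2-q^{-2}}.
\end{equation*}
Applying $*\otimes *\otimes *$ (which is conjugate linear and product reversing) and substituting the already-known values of $(A^\flat)^*, (B^\flat)^*, (\gamma^\flat)^*$ together with the specific value of $\bar q$ in each of (R1)--(R5) (namely $q, q, q^{-1}, -q, -q$), one obtains $(C^\flat)^*$ as $\flat$ applied to an explicit element of $\triangle_q$ expressible in terms of $A$, $B$, $\gamma$; hence $(C^\flat)^* \in \flat(\triangle_q)$. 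This completes the verification that $\flat(\triangle_q)$ is invariant under $*\otimes *\otimes *$, and $\dag$ is then defined by pullback.

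The main obstacle is this last computation: $C^\flat$ has a complicated explicit expression (see the tables in \S\ref{s:homo}), and attacking $(C^\flat)^*$ directly is impractical. Routing the computation through the defining relation of $\triangle_q$ reduces it to the already-clear action of $*$ on $A^\flat$, $B^\flat$, $\gamma^\flat$ combined with the case-specific value of $\bar q$, which is the key step. Once invariance is established, the relation $\dag\circ \dag=1$ follows from $(*\otimes *\otimes *)^2=1$ together with the injectivity of $\flat$, and the conjugate-linearity and anti-multiplicativity of $\dag$ are inherited from those of $*\otimes *\otimes *$, completing the proof.
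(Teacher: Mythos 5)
Your construction of $\dag$ differs from the paper's in a way that is worth noting. The paper works with the presentation of $\triangle_q$ by the generators $A$, $B$, $\gamma$ (Lemma \ref{lem:ABc}): in each case it declares $A^\dag=\pm A$, $B^\dag=\pm B$, $\gamma^\dag=\gamma$, invokes the presentation to get a well-defined algebra involution (which tacitly requires checking that the relations of Lemma \ref{lem:ABc} are stable under product reversal and conjugation of the coefficients), and then verifies (\ref{e:dag}) on generators. You instead verify that $\flat(\triangle_q)$ is stable under $*\otimes *\otimes *$ and define $\dag=\flat^{-1}\circ(*\otimes *\otimes *)\circ\flat$; this buys you the involution axioms for free, since they are inherited from the honest anti-automorphism $*\otimes *\otimes *$ of $\U^{\otimes 3}$, at the cost of having to control $(C^\flat)^*$. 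Your device of routing $(C^\flat)^*$ through the relation $C^\flat=\frac{\gamma^\flat}{q+q^{-1}}-\frac{qA^\flat B^\flat-q^{-1}B^\flat A^\flat}{q^2-q^{-2}}$ is sound (and you could have avoided $C^\flat$ altogether by observing that $A^\flat$, $B^\flat$, $\gamma^\flat$ already generate $\flat(\triangle_q)$). The underlying computations --- Lemma \ref{lem:Lambda*}, the compatibility (\ref{e:Delta*}) of $*$ with $\Delta$ and $\Delta_2$, the sign $\epsilon^2=1$ for $\alpha^\flat$, $\beta^\flat$, $\gamma^\flat$, and the case split (R1)--(R3) versus (R4), (R5) --- coincide with the paper's.

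There is one gap to close: the list (R1)--(R5) classifies the Hopf $*$-algebra structures of $\U$ only \emph{up to equivalence}, whereas the theorem is asserted for \emph{any} Hopf $*$-algebra of $\U$. Your argument, which substitutes the specific value of $\bar q$ in each of (R1)--(R5), establishes existence only for those five representatives. For a general structure $*'=\phi\circ *\circ\phi^{-1}$ with $\phi$ a Hopf algebra automorphism of $\U$, one still needs to see that the same $\dag$ works; this is exactly what the paper extracts from Corollary \ref{cor:Hopfauto}, whose (ii) $\Rightarrow$ (i) direction shows that $\phi\otimes\phi\otimes\phi$ fixes $\flat(\triangle_q)$ pointwise, so that $*'\otimes *'\otimes *'$ and $*\otimes *\otimes *$ agree on the image of $\flat$. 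Adding this one step makes your proof complete.
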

\begin{proof}
By Theorem \ref{thm:injective}, if the algebra involution $\dag$ of $\triangle_q$ exists then it is unique. In what follows we prove the existence of $\dag$. 

Assume that $*$ is one of the involutions of $\U$ given in (R1)--(R3). 
By Lemma \ref{lem:Lambda*} the elements $1\otimes 1\otimes \Lambda$, $\Lambda\otimes 1\otimes 1$, $1\otimes \Lambda\otimes 1$ are invariant under $*\otimes *\otimes *$. By (\ref{e:Delta*}) the elements $\Delta(\Lambda)\otimes 1$, $1\otimes \Delta(\Lambda)$, $\Delta_2(\Lambda)$ are invariant under $*\otimes *\otimes *$. Thus, the elements $A^\flat$, $B^\flat$, $\gamma^\flat$ are invariant under $*\otimes *\otimes *$ by Theorem~\ref{thm:embedd}. On the other hand, by Lemma \ref{lem:ABc} there exists a unique algebra involution $\dag:\triangle_q\to \triangle_q$ such that
\begin{gather*}
A^\dag=A,
\qquad
B^\dag=B,
\qquad
\gamma^\dag=\gamma.
\end{gather*}
Combining the above comments the involution $\dag$ of $\triangle_q$ satisfies 
(\ref{e:dag}).

Assume that $*$ is one of the involutions of $\U$ given in (R4), (R5). 
By Lemma \ref{lem:Lambda*} the involution $*\otimes *\otimes *$ sends each of $1\otimes 1\otimes \Lambda$, $\Lambda\otimes 1\otimes 1$, $1\otimes \Lambda\otimes 1$ to its negative. By (\ref{e:Delta*}) the involution $*\otimes *\otimes *$ also sends each of $\Delta(\Lambda)\otimes 1$, $1\otimes \Delta(\Lambda)$, $\Delta_2(\Lambda)$ to its negative. Thus, by Theorem~\ref{thm:embedd} the images of $A^\flat$, $B^\flat$, $\gamma^\flat$ under $*\otimes *\otimes *$ are $-A^\flat$, $-B^\flat$, $\gamma^\flat$ respectively. On the other hand, 
by Lemma \ref{lem:ABc} there exists a unique algebra involution $\dag:\triangle_q\to \triangle_q$ with
\begin{gather*}
A^\dag=-A,
\qquad
B^\dag=-B,
\qquad
\gamma^\dag=\gamma.
\end{gather*}
Combining the above comments the involution $\dag$ of $\triangle_q$ satisfies 
(\ref{e:dag}). 

The cases (R1)--(R5) include all Hopf $*$-algebras of $\U$ up to equivalence. Therefore the above two cases imply the existence of $\dag$ by Corollary \ref{cor:Hopfauto}. The theorem follows.
\end{proof}

\section{The $\U\otimes \U\otimes \U$-modules as $\triangle_q$-modules}\label{s:RW}

By Theorem \ref{thm:embedd} each $\U\otimes \U\otimes \U$-module can be considered as a $\triangle_q$-module via pulling back the homomorphism $\flat$. The aim of this section is to study the $3$-fold tensor products of irreducible Verma $\U$-modules and of finite-dimensional $\U$-modules from the viewpoint of $\triangle_q$-modules.

The outline of this section is as follows.
In \S\ref{s:CGVerma} (resp. \S\ref{s:CG}) we display the coupled bases of $2$-fold tensor products of irreducible Verma $\U$-modules (resp. finite-dimensional irreducible $\U$-modules). In \S\ref{s:RW2} (resp. \S\ref{s:RW1}) we apply the results of \S\ref{s:AWmodule} and \S\ref{s:CGVerma} (resp. \S\ref{s:CG}) to find a decomposition formula for $3$-fold tensor products of irreducible Verma $\U$-modules (resp. finite-dimensional irreducible $\U$-modules) into the direct sums of finite-dimensional irreducible $\triangle_q$-modules. 

\subsection{Coupled bases of tensor products of irreducible Verma $\U$-modules}\label{s:CGVerma}

Recall from \S\ref{s:Umodule} the Verma $\U$-modules $M(\lambda)$ and the action of $K$, $F$, $E$ on $M(\lambda)$ with respect to the $\F$-basis $\{m_i^{(\lambda)}\}_{i\in \N}$ of $M(\lambda)$.

\begin{lem}\label{lem:W}
Assume that $q$ is not a root of unity. Let $\kappa$ and $\lambda$ denote nonzero scalars in $\F$. For each $h\in \N$ let
$$
N(h)
$$
denote the eigenspace of $\Delta(K)$ on $M(\kappa)\otimes M(\lambda)$ associated with eigenvalue
$\kappa\lambda  q^{-2h}$.
Then the following {\rm (i)}, {\rm (ii)} hold:
\begin{enumerate}
\item $M(\kappa)\otimes M(\lambda)=\displaystyle{\bigoplus_{h\in \N} N(h)}$.

\item For each $h\in \N$ the $\F$-vector space $N(h)$ is of dimension $h+1$.
\end{enumerate}
\end{lem}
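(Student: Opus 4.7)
The plan is to decompose $M(\kappa)\otimes M(\lambda)$ using the tensor product of the standard bases and observe that $\Delta(K)=K\otimes K$ acts diagonally on that basis. Specifically, from the action of $K$ on $M(\kappa)$ and $M(\lambda)$ described in \S\ref{s:Umodule}, I would compute
\begin{gather*}
\Delta(K)(m^{(\kappa)}_i\otimes m^{(\lambda)}_j)
=\kappa\lambda q^{-2(i+j)}\,m^{(\kappa)}_i\otimes m^{(\lambda)}_j
\qquad (i,j\in\N),
\end{gather*}
so that each basis vector $m^{(\kappa)}_i\otimes m^{(\lambda)}_j$ lies in $N(i+j)$.

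For (i), since $\{m^{(\kappa)}_i\otimes m^{(\lambda)}_j\}_{i,j\in\N}$ is an $\F$-basis of $M(\kappa)\otimes M(\lambda)$, the above computation shows $M(\kappa)\otimes M(\lambda)=\sum_{h\in\N}N(h)$. The sum is direct because the scalars $\kappa\lambda q^{-2h}$ for $h\in\N$ are pairwise distinct: if $\kappa\lambda q^{-2h}=\kappa\lambda q^{-2h'}$ with $h\neq h'$, then $q^{2(h-h')}=1$, contradicting the assumption that $q$ is not a root of unity.

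For (ii), the vectors $\{m^{(\kappa)}_i\otimes m^{(\lambda)}_{h-i}\}_{i=0}^{h}$ are $h+1$ linearly independent elements of $N(h)$. To see they span $N(h)$, I would expand an arbitrary vector $v\in M(\kappa)\otimes M(\lambda)$ in the basis $\{m^{(\kappa)}_i\otimes m^{(\lambda)}_j\}_{i,j\in\N}$ and observe that the condition $\Delta(K)v=\kappa\lambda q^{-2h}v$ forces all coefficients with $i+j\neq h$ to vanish, again by the distinctness of the eigenvalues. Hence $\dim_\F N(h)=h+1$.

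No step presents a genuine obstacle; the argument is a straightforward bookkeeping computation in the PBW-type basis of the tensor product. The only subtle point to flag is the use of the hypothesis that $q$ is not a root of unity, which is needed precisely to separate the eigenspaces indexed by different $h\in\N$.
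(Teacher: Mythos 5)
Your proposal is correct and follows essentially the same route as the paper: diagonalize $\Delta(K)=K\otimes K$ on the basis $\{m^{(\kappa)}_i\otimes m^{(\lambda)}_j\}$, note each basis vector lies in $N(i+j)$, and use the pairwise distinctness of the eigenvalues $\kappa\lambda q^{-2h}$ (from $q$ not being a root of unity) to get both the directness of the sum and the dimension count. No gaps.
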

\begin{proof}
Since $q$ is not a root of unity the scalars $\{\kappa\lambda q^{-2h}\}_{h\in \N}$ are mutually distinct. Therefore
\begin{gather*}
\sum_{h\in \N} N(h)
\end{gather*}
is a direct sum.
For each $h\in \N$ the $\F$-vector space $N(h)$ contains $m_i^{(\kappa)}\otimes m_j^{(\lambda)}$
for all $i,j\in \N$ with $i+j=h$. Combined with the fact that $
\{m_i^{(\kappa)}\otimes m_j^{(\lambda)}\}_{i,j\in \N}$
are an $\F$-basis of $M(\kappa)\otimes M(\lambda)$, the lemma follows.
\end{proof}


A decomposition rule for the $\U$-module $M(\kappa)\otimes M(\lambda)$ is given below.

\begin{prop}\label{prop:CGformulaVerma}
Assume that $q$ is not a root of unity. Let $\kappa$ and $\lambda$ denote nonzero scalars in $\F$ with
\begin{gather*}
\kappa,
\,
\lambda,
\,
\kappa\lambda
\not\in
\{\pm q^{n}
\,|\,
n\in \N\}.
\end{gather*} 
For all $h\in \N$ let 
\begin{eqnarray}
m^{(\kappa,\lambda;h)}_0
&=&
\sum_{i=0}^h
(-1)^i \kappa^i q^{i(1-i)}
\prod_{\ell=1}^{h-i}
\frac{[\kappa;\ell-h]}{[\lambda;i+\ell-h]}
m^{(\kappa)}_i\otimes m^{(\lambda)}_{h-i},
\label{e:m0}
\\
m^{(\kappa,\lambda;h)}_k
&=&\frac{1}{[k]} \Delta(F) m^{(\kappa,\lambda;h)}_{k-1}
\qquad
\hbox{for all $k\in \N^*$}.
\label{e:mk}
\end{eqnarray}
Then the following {\rm (i)}, {\rm (ii)} hold:
\begin{enumerate}

\item The vectors
\begin{gather*}\label{e:coupledVerma}
m^{(\kappa,\lambda;h)}_k
\qquad
\hbox{for all $h,k\in \N$}
\end{gather*}
are an $\F$-basis of $M(\kappa)\otimes M(\lambda)$.

\item There exists a unique $\U$-module isomorphism
\begin{eqnarray*}
M(\kappa)\otimes M(\lambda)
&\to &
\bigoplus_{h\in \N}
M(\kappa\lambda q^{-2h})
\end{eqnarray*}
that sends
$m^{(\kappa,\lambda;h)}_k$
to
$m^{(\kappa\lambda q^{-2h})}_k$ for all $h,k\in \N$.
\end{enumerate}
\end{prop}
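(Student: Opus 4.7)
The plan is to use the universal property of Verma $\U$-modules (Proposition~\ref{prop:UPUqsl2}) to construct a $\U$-module homomorphism
\[
\Phi: \bigoplus_{h\in\N} M(\kappa\lambda q^{-2h}) \to M(\kappa)\otimes M(\lambda)
\]
sending $m_k^{(\kappa\lambda q^{-2h})}$ to $m_k^{(\kappa,\lambda;h)}$, and to verify that $\Phi$ is bijective by a $\Delta(K)$-weight-space dimension count using Lemma~\ref{lem:W}. This will deliver both statements (i) and (ii) simultaneously.

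The first task is to show that $m_0^{(\kappa,\lambda;h)}$ is a nonzero highest-weight vector of weight $\kappa\lambda q^{-2h}$. Each summand $m_i^{(\kappa)}\otimes m_{h-i}^{(\lambda)}$ has $\Delta(K)$-eigenvalue $\kappa\lambda q^{-2h}$, so the weight condition is automatic. The hypotheses $\kappa,\lambda\notin\{\pm q^n\mid n\in\N\}$ ensure that every $[\kappa;n]$ and $[\lambda;n]$ with $n\leq 0$ is nonzero, so the coefficients in (\ref{e:m0}) are well-defined and the $i=0$ term in particular has nonzero coefficient, giving $m_0^{(\kappa,\lambda;h)}\neq 0$. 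To verify $\Delta(E)m_0^{(\kappa,\lambda;h)}=0$, expand $\Delta(E)=E\otimes 1+K\otimes E$ and, using the actions of $E$ on the two Verma bases, collect the coefficient of each $m_i^{(\kappa)}\otimes m_{h-i-1}^{(\lambda)}$: the contributions come from the $(i+1)$-th summand of (\ref{e:m0}) via $E\otimes 1$ and from the $i$-th summand via $K\otimes E$, and after matching the products of $\kappa$- and $\lambda$-brackets (whose index ranges differ by one) the two contributions cancel exactly. Proposition~\ref{prop:UPUqsl2} then yields a unique $\U$-module homomorphism $\phi_h:M(\kappa\lambda q^{-2h})\to M(\kappa)\otimes M(\lambda)$ sending $m_0^{(\kappa\lambda q^{-2h})}\mapsto m_0^{(\kappa,\lambda;h)}$, and since $F^k m_0^{(\mu)}=[1][2]\cdots[k]\,m_k^{(\mu)}$ in any Verma module while the same recursion (\ref{e:mk}) defines $m_k^{(\kappa,\lambda;h)}$, we get $\phi_h(m_k^{(\kappa\lambda q^{-2h})})=m_k^{(\kappa,\lambda;h)}$.

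To finish, observe that the hypothesis $\kappa\lambda\notin\{\pm q^n\mid n\in\N\}$ implies $\kappa\lambda q^{-2h}\notin\{\pm q^n\mid n\in\N\}$ for every $h\in\N$, so by Lemma~\ref{lem:irrVerma} each $M(\kappa\lambda q^{-2h})$ is irreducible; hence every $\phi_h$ is injective. A standard argument using the mutually distinct highest weights of the source summands (if $h_0$ is minimal among indices with nonzero contribution to a hypothetical relation, the $\kappa\lambda q^{-2h_0}$-weight component cannot appear in the images of $\phi_h$ with $h>h_0$) shows that the images are in direct sum, so $\Phi=\bigoplus_h \phi_h$ is injective. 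Restricting $\Phi$ to the $\Delta(K)$-eigenspace of eigenvalue $\kappa\lambda q^{-2j}$, the source contributes exactly $j+1$ basis vectors (namely $m_{j-h}^{(\kappa\lambda q^{-2h})}$ for $h=0,\ldots,j$), while the target $N(j)$ has dimension $j+1$ by Lemma~\ref{lem:W}; dimension equality in each weight space forces $\Phi$ to be an isomorphism, giving (ii), and (i) follows because an isomorphism carries a basis to a basis. The principal technical obstacle is the $\Delta(E)$-annihilation above: the specific $q$-bracket products $\prod_{\ell=1}^{h-i}[\kappa;\ell-h]/[\lambda;i+\ell-h]$ are precisely tuned so that the telescoping works, but tracking the index shifts in the cancellation between the two contributions is delicate.
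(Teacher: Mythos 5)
Your proposal is correct and follows essentially the same route as the paper: both hinge on the direct computation $\Delta(E)\,m_0^{(\kappa,\lambda;h)}=0$ (whose cancellation mechanism you describe accurately), on Proposition~\ref{prop:UPUqsl2}, on the irreducibility of $M(\kappa\lambda q^{-2h})$ supplied by Lemma~\ref{lem:irrVerma}, and on the dimension count $\dim N(h)=h+1$ from Lemma~\ref{lem:W}. The only difference is organizational: the paper first proves linear independence of the coupled vectors inside each $\Delta(K)$-eigenspace by applying $\Delta(E)^{j}$ to a hypothetical relation and then builds the isomorphism summand by summand, whereas you assemble the map $\Phi$ first and deduce (i) from its bijectivity, your direct-sum step (irreducibility of $\mathrm{im}\,\phi_{h_0}$ together with the absence of the weight $\kappa\lambda q^{-2h_0}$ in $\sum_{h>h_0}\mathrm{im}\,\phi_h$) being the module-theoretic counterpart of the paper's $\Delta(E)^{j}$ computation.
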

\begin{proof}
Fix an $h\in \N$. Recall the $\F$-subspace $N(h)$ of $M(\kappa)\otimes M(\lambda)$ from Lemma \ref{lem:W}.
Observe that
\begin{gather}\label{e:basisWh}
m^{(\kappa,\lambda;h-i)}_{i}
\in N(h)
\qquad
(0\leq i\leq h).
\end{gather}
We show that the $(h+1)$ vectors (\ref{e:basisWh}) are linearly independent over $\F$. Suppose on the contrary that there exist scalars $\{c_i\}_{i=0}^h$ in $\F$, not all zero, such that
\begin{gather}\label{e:ind}
\sum_{i=0}^h
c_i
m^{(\kappa,\lambda;h-i)}_{i}
=0.
\end{gather}
Let $j=\max\{j\,|\,c_j\not=0\}$. Applying $\Delta(E)^j$ to either side of (\ref{e:ind}) it follows that
\begin{gather}\label{e:nonzero}
c_j
\prod_{i=1}^{j}
[\kappa \lambda q^{2(j-h)};i-j]
m^{(\kappa,\lambda;h-j)}_{0}=0.
\end{gather}
Since $\kappa\lambda\not=\pm q^n$ for all $n\in \N$ the coefficient in the left-hand side of (\ref{e:nonzero}) is nonzero. Therefore the left-hand side of (\ref{e:nonzero}) is a nonzero vector, a contradiction.
This shows that (\ref{e:basisWh}) are linearly independent over $\F$. By Lemma \ref{lem:W}(ii) the vectors (\ref{e:basisWh}) are an $\F$-basis of $N(h)$. Combined with Lemma \ref{lem:W}(i) the statement (i) follows.

Let $M$ denote the $\U$-submodule of $M(\kappa)\otimes M(\lambda)$ generated by
$m^{(\kappa,\lambda;h)}_0$. 
A direct calculation yields that
\begin{gather*}
Em^{(\kappa,\lambda;h)}_0=0,
\qquad
K m^{(\kappa,\lambda;h)}_0=\kappa\lambda q^{-2h} m^{(\kappa,\lambda;h)}_0.
\end{gather*}
By Proposition \ref{prop:UPUqsl2} there exists a unique $\U$-module homomorphism $\phi:M(\kappa\lambda q^{-2h})\to M$ that maps $m^{(\kappa\lambda q^{-2h})}_0$ to $m^{(\kappa,\lambda;h)}_0$. By the construction of $M$ the homomorphism $\phi$ is surjective. Since $\kappa\lambda\not=\pm q^n$ for all $n\in \N$ and by Lemma~\ref{lem:irrVerma}, the Verma $\U$-module $M(\kappa\lambda  q^{-2h})$ is irreducible. Since $M\not=0$ it follows that $\phi$ is injective. Therefore $\phi$ is a $\U$-module isomorphism. By (\ref{e:mk}) the vector $m^{(\kappa,\lambda;h)}_k$ ($k\in \N$) is the image of $m^{(\kappa\lambda q^{-2h})}_k$ under $\phi$. Combined with (i) the statement (ii) follows.
\end{proof}

The $\F$-basis of $M(\kappa)\otimes M(\lambda)$ given in Proposition \ref{prop:CGformulaVerma}(i) is called the {\it coupled $\F$-basis} of $M(\kappa)\otimes M(\lambda)$. On the other hand the $\U$-module $M(\kappa)\otimes M(\lambda)$ has the $\F$-basis
\begin{gather}\label{e:canon2foldVerma}
m_i^{(\kappa)}\otimes m_j^{(\lambda)}
\qquad
\hbox{for all $i,j\in \N$}.
\end{gather}
For all $h,i,j,k\in \N$ let
\begin{gather*}
\begin{Large}
\left[
\begin{smallmatrix}
\kappa \; &\lambda \; &\kappa\lambda q^{-2h}
\\
i &j &k
\end{smallmatrix}
\right]
\end{Large}
\end{gather*}
denote the coefficient of $m^{(\kappa)}_i
\otimes
m^{(\lambda)}_j$ in $m^{(\kappa,\lambda;h)}_k$ with respect to the $\F$-basis
(\ref{e:canon2foldVerma})
of $M(\kappa)\otimes M(\lambda)$.

\begin{prop}\label{prop:CGVerma}
Assume that $q$ is not a root of unity. Let $\kappa$ and $\lambda$ denote nonzero scalars in $\F$ with
$$
\kappa,\,
\lambda,\,
\kappa\lambda
\not\in
\{\pm q^{n}
\,|\,
n\in \N\}.
$$
For all $h,i,j,k\in \N$ the following {\rm (i)}, {\rm (ii)} hold:
\begin{enumerate}
\item If $h+k\not=i+j$ then
$
\begin{Large}
\left[
\begin{smallmatrix}
\kappa \; &\lambda \; &\kappa\lambda q^{-2h}
\\
i &j &k
\end{smallmatrix}
\right]
\end{Large}
=0$.

\item If $h+k=i+j$ then
$
\begin{Large}
\left[
\begin{smallmatrix}
\kappa \; &\lambda \; &\kappa\lambda q^{-2h}
\\
i &j &k
\end{smallmatrix}
\right]
\end{Large}
$
is equal to
$$
q^{i(h+j)}
\sum_{r=0}^h
(-1)^{r-h}
q^{(r-h)(2h+k-1)}
\kappa^{h-r}
\lambda^{h-r-i}
{i\brack h-r}
{j\brack r}
\prod_{s=1}^r
\frac{[\kappa;s-h]}{[\lambda;1-s]}.
$$
\end{enumerate}
\end{prop}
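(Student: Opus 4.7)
The plan is to treat parts (i) and (ii) separately, using a $\Delta(K)$-weight argument for (i) and induction on $k$ for (ii).

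For (i), observe that since $KF=q^{-2}FK$ and $\Delta$ is an algebra homomorphism, we have $\Delta(K)\Delta(F)=q^{-2}\Delta(F)\Delta(K)$. By its definition each summand of $m_0^{(\kappa,\lambda;h)}$ lies in the subspace $N(h)$ of Lemma~\ref{lem:W}, so $m_0^{(\kappa,\lambda;h)}$ is a $\Delta(K)$-eigenvector with eigenvalue $\kappa\lambda q^{-2h}$. Hence $m_k^{(\kappa,\lambda;h)}=\frac{1}{[k]!}\Delta(F)^k m_0^{(\kappa,\lambda;h)}$ is a $\Delta(K)$-eigenvector with eigenvalue $\kappa\lambda q^{-2(h+k)}$. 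Since $m_i^{(\kappa)}\otimes m_j^{(\lambda)}$ has $\Delta(K)$-eigenvalue $\kappa\lambda q^{-2(i+j)}$, only terms with $h+k=i+j$ can contribute, proving (i).

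For (ii), I would induct on $k$. In the base case $k=0$ we have $h=i+j$, and in the sum of (ii) the $q$-binomials ${i\brack h-r}{j\brack r}$ force $r\ge h-i=j$ while $r\le j$, so only $r=j$ contributes. A direct computation of the $q$-powers yields $(-1)^i\kappa^i q^{i(1-i)}\prod_{s=1}^j\frac{[\kappa;s-h]}{[\lambda;1-s]}$, which matches the coefficient read off from the definition of $m_0^{(\kappa,\lambda;h)}$ once the denominator product is re-indexed by $s\mapsto j+1-s$, pairing $[\lambda;1-s]$ with $[\lambda;\ell-j]$. For the inductive step, applying $\Delta(F)=F\otimes K^{-1}+1\otimes F$ together with the explicit action
$$\Delta(F)(m_i^{(\kappa)}\otimes m_j^{(\lambda)})=[i+1]\lambda^{-1}q^{2j}\,m_{i+1}^{(\kappa)}\otimes m_j^{(\lambda)}+[j+1]\,m_i^{(\kappa)}\otimes m_{j+1}^{(\lambda)}$$
(from (\ref{e:DF}) combined with $K^{-1}m_j^{(\lambda)}=\lambda^{-1}q^{2j}m_j^{(\lambda)}$) yields the two-term recurrence
$$[k]\,c(i,j,k)=[i]\lambda^{-1}q^{2j}\,c(i-1,j,k-1)+[j]\,c(i,j-1,k-1),$$
where $c(i,j,k)$ abbreviates the target coefficient.

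The main obstacle is then to verify that the closed form proposed in (ii) satisfies this recurrence. After substitution and extraction of the common factor $\kappa^{h-r}\lambda^{h-r-i}\prod_{s=1}^r[\kappa;s-h]/[\lambda;1-s]$ from each summand, matching contributions index-by-index in $r$ reduces to a $q$-Pascal identity involving ${i\brack h-r}$, ${i-1\brack h-r}$, ${j\brack r}$, and ${j-1\brack r}$. Together with the constraint $h+k-1=(i-1)+j=i+(j-1)$, this follows from the standard recursions ${n\brack r}=q^{r}{n-1\brack r}+q^{r-n}{n-1\brack r-1}=q^{-r}{n-1\brack r}+q^{n-r}{n-1\brack r-1}$ applied separately to the two $q$-binomial factors. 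This algebraic bookkeeping is the technically heaviest part of the argument but is routine; once completed, the induction closes and the proposition follows.
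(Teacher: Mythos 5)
Your proposal is correct and follows essentially the same route as the paper: the paper likewise reads off the $k=0$ values from (\ref{e:m0}), derives exactly your recurrence $[k]\,c(i,j,k)=\lambda^{-1}q^{2j}[i]\,c(i-1,j,k-1)+[j]\,c(i,j-1,k-1)$ by comparing coefficients in (\ref{e:mk}), and then asserts that the closed form solves it. The only cosmetic differences are that you handle (i) by a $\Delta(K)$-weight argument (the paper gets it from the same recurrence and initial data, cf.\ Lemma \ref{lem:W}) and that you spell out the $q$-Pascal verification the paper leaves implicit.
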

\begin{proof}
By (\ref{e:m0}), for all $h,i,j\in\N$ we have 
\begin{gather}\label{e:initialCG}
\begin{Large}
\left[
\begin{smallmatrix}
\kappa \; &\lambda \; &\kappa\lambda q^{-2h}
\\
i &j &0
\end{smallmatrix}
\right]
\end{Large}
=
\left\{
\begin{array}{ll}
0 \qquad &\hbox{if $h\not=i+j$},
\\
(-1)^i \kappa^i q^{i(1-i)}
\displaystyle{
\prod_{\ell=1}^{j}
\frac{[\kappa;\ell-h]}{[\lambda;\ell-j]}
}
\qquad
&\hbox{if $h=i+j$}.
\end{array}
\right.
\end{gather}
Comparing the coefficients of $m^{(\kappa)}_i\otimes m^{(\lambda)}_j$ in both sides of (\ref{e:mk}) yields that
\begin{gather}\label{e:recCG}
[k]
\begin{Large}
\left[
\begin{smallmatrix}
\kappa \; &\lambda \; &\kappa\lambda q^{-2h}
\\
i &j &k
\end{smallmatrix}
\right]
\end{Large}
=\lambda^{-1} q^{2j} [i]
\begin{Large}
\left[
\begin{smallmatrix}
\kappa \; &\lambda \; &\kappa\lambda q^{-2h}
\\
i-1 &j &k-1
\end{smallmatrix}
\right]
\end{Large}
+
[j]
\begin{Large}
\left[
\begin{smallmatrix}
\kappa \; &\lambda \; &\kappa\lambda q^{-2h}
\\
i &j-1 &k-1
\end{smallmatrix}
\right]
\end{Large}
\end{gather}
for all $h\in \N$ and $i,j,k\in \N^*$. The lemma follows by solving the recurrence relation (\ref{e:recCG}) with initial values (\ref{e:initialCG}).
\end{proof}

\subsection{Three-fold tensor products of irreducible Verma $\U$-modules}\label{s:RW2}

In this subsection, we study the $3$-fold tensor products of irreducible Verma $\U$-modules from the viewpoint of $\triangle_q$-modules.

\begin{thm}\label{thm:RW2}
Assume that $q$ is not a root of unity.
Let $\kappa$, $\lambda$, $\mu$ denote nonzero scalars in $\F$ with
\begin{gather*}
\kappa,\,
\lambda,\,
\mu, \,
\kappa\lambda,\,
\lambda\mu,\,
\kappa\lambda\mu
\not\in
\{\pm q^n\,|\,n\in \N\}.
\end{gather*}
For all $d,k\in \N$ let
$$
M_k(d)
$$
denote the $\F$-subspace of $M(\kappa)\otimes M(\lambda)\otimes M(\mu)$ spanned by the simultaneous eigenvectors of $\Delta_2(K)$ and $\Delta_2(\Lambda)$ associated with eigenvalues $
\kappa\lambda\mu q^{-2(d+k)}$ and $\kappa\lambda\mu q^{1-2d}
+\kappa^{-1}\lambda^{-1}\mu^{-1} q^{2d-1}
$,
respectively. Then the following {\rm (i)--(iv)} hold:
\begin{enumerate}
\item $M(\kappa)\otimes M(\lambda)\otimes M(\mu)
=
\displaystyle{
\bigoplus_{d,k\in \N}
M_k(d)}$.

\item For all $d,k\in \N$ the $\F$-vector space $M_k(d)$ is a $(d+1)$-dimensional irreducible $\triangle_q$-module isomorphic to $V_d(a,b,c)$ where 
\begin{eqnarray*}
a &=& \kappa^{-1}\lambda^{-1}q^{d-1},
\\
b &=& \lambda^{-1}\mu^{-1}q^{d-1},
\\
c &=& \mu^{-1}\kappa^{-1}q^{d-1}.
\end{eqnarray*}

\item For all $d,k\in \N$ the action of $A$, $B$ on $M_k(d)$ is as a Leonard pair.

\item For all $d,k\in \N$ the following are equivalent:
\begin{enumerate}
\item $A$, $B$, $C$ act on $M_k(d)$ as a Leonard triple.

\item $B$, $C$ act on $M_k(d)$ as a Leonard pair.

\item $C$, $A$ act on $M_k(d)$ as a Leonard pair.

\item $C$ is diagonalizable on $M_k(d)$. 

\item $\mu\kappa\not\in \{\pm q^n\,|\, 0\leq n\leq 2d-2\}$.
\end{enumerate}    
\end{enumerate}
\end{thm}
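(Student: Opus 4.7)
The strategy is to decompose $M(\kappa)\otimes M(\lambda)\otimes M(\mu)$ as a $\U$-module via $\Delta_2$ by applying Proposition \ref{prop:CGformulaVerma} twice, to recognize each simultaneous $(\Delta_2(K),\Delta_2(\Lambda))$-eigenspace $M_k(d)$ as a $\triangle_q$-submodule by Corollary \ref{cor:commu}, and to identify it with $V_d(a,b,c)$ via Proposition \ref{prop:hom}.

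For part (i), I would first apply Proposition \ref{prop:CGformulaVerma} to obtain $M(\kappa)\otimes M(\lambda)\cong\bigoplus_{e\in\N}M(\kappa\lambda q^{-2e})$ and then again to each $M(\kappa\lambda q^{-2e})\otimes M(\mu)\cong\bigoplus_{f\in\N}M(\kappa\lambda\mu q^{-2(e+f)})$; the standing hypotheses immediately imply that the shifted scalars $\kappa\lambda q^{-2e}$ and $\kappa\lambda\mu q^{-2e}$ remain outside $\{\pm q^n:n\in\N\}$. By Lemma \ref{lem:CasVerma}, $\Delta_2(\Lambda)$ acts on $M(\kappa\lambda\mu q^{-2(e+f)})$ as $\kappa\lambda\mu q^{1-2(e+f)}+(\kappa\lambda\mu)^{-1}q^{2(e+f)-1}$, a scalar separating distinct values of $d:=e+f$ precisely because $\kappa\lambda\mu\notin\{\pm q^n\}$. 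Since $\Delta_2(K)$ acts on the basis vector $m_k^{(\kappa\lambda\mu q^{-2d})}$ as $\kappa\lambda\mu q^{-2(d+k)}$, the space $M_k(d)$ is the direct sum of $\F$-lines through these vectors over the $d+1$ pairs $(e,f)\in\N^2$ with $e+f=d$, establishing (i) together with $\dim M_k(d)=d+1$.

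For part (ii), Corollary \ref{cor:commu} makes $M_k(d)$ a $\triangle_q$-submodule. Using Theorem \ref{thm:embedd}, Lemma \ref{lem:CasVerma}, and the scalar action of $\Delta_2(\Lambda)$ on $M_k(d)$, a direct expansion confirms that $\alpha,\beta,\gamma$ act on $M_k(d)$ as $\omega,\omega^*,\omega^\e$ for the stated $a,b,c,\nu$. The pairing-first decomposition of $M(\kappa)\otimes M(\lambda)$ shows that $A=\Delta(\Lambda)\otimes 1$ has simple spectrum $\{\theta_i\}_{i=0}^d$ on $M_k(d)$ (distinctness again from $\kappa\lambda\notin\{\pm q^n\}$), so its characteristic polynomial is $\prod_{i=0}^d(X-\theta_i)$; symmetrically, pairing $M(\lambda)\otimes M(\mu)$ first shows $B=1\otimes\Delta(\Lambda)$ has simple spectrum $\{\theta_i^*\}_{i=0}^d$. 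Pick a nonzero $v_0\in M_k(d)$ with $Bv_0=\theta_0^* v_0$ (unique up to scalar). The remaining hypothesis $(B-\theta_1^*)(A-\theta_0)v_0=\varphi_1 v_0$ of Proposition \ref{prop:hom} I would extract from the Casimir identity \eqref{e:Casimirtriangle} of $\triangle_q$: eliminate $C$ via $C=\gamma/(q+q^{-1})-(qAB-q^{-1}BA)/(q^2-q^{-2})$ to express $\Omega$ as a polynomial in $A$ and $B$ with central coefficients, compare with the scalar value of $\Omega$ on $M_k(d)$ supplied by Theorem \ref{thm:Casimir}, and evaluate at $v_0$ using the $B$-eigenvector property and the central scalar actions. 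Proposition \ref{prop:hom} then produces a nonzero homomorphism $V_d(a,b,c)\to M_k(d)$; since $V_d(a,b,c)$ is irreducible by Lemma \ref{lem:irr}—the exclusions of $abc,a^{-1}bc,ab^{-1}c,abc^{-1}$ from $\{q^{d+1-2i}:1\leq i\leq d\}$ reduce respectively to $\kappa\lambda\mu,\mu,\kappa,\lambda\notin\{\pm q^n:n\in\N\}$, all in hand—this map is injective and by dimension bijective.

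For parts (iii) and (iv), apply Lemmas \ref{lem:lp} and \ref{lem:lt} to $V_d(a,b,c)\cong M_k(d)$. The forbidden set $\{q^{2(d-i)}:1\leq i\leq 2d-1\}$ for $a^2$ (respectively $b^2$, $c^2$) equals $\{q^{2j}:|j|\leq d-1\}$, which under $a=\kappa^{-1}\lambda^{-1}q^{d-1}$ (resp.\ $b$, $c$) translates into $\{\pm q^n:0\leq n\leq 2d-2\}$ for $\kappa\lambda$ (resp.\ $\lambda\mu$, $\mu\kappa$). The first two are excluded by the standing hypotheses, giving (iii); the third yields condition (iv)(e). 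The main obstacle is the verification of $(B-\theta_1^*)(A-\theta_0)v_0=\varphi_1 v_0$: the vector $v_0$ is characterized only as a $B$-eigenvector with no direct handle on its $A$-eigenbasis expansion, so either the Casimir-based route sketched above—reducing the question to a tractable scalar identity among $\omega,\omega^*,\omega^\e$ and the $\theta,\theta^*,\varphi$ parameters—or an explicit (and much heavier) computation in the coupled basis of Proposition \ref{prop:CGformulaVerma} is needed to clinch the argument.
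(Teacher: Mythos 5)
Your overall architecture coincides with the paper's: decompose via two applications of Proposition \ref{prop:CGformulaVerma} to get two coupled bases of $M(\kappa)\otimes M(\lambda)\otimes M(\mu)$ (one diagonalizing $A=\Delta(\Lambda)\otimes 1$, one diagonalizing $B=1\otimes\Delta(\Lambda)$ on each $M_k(d)$), record the scalar actions of $\alpha,\beta,\gamma$, reduce to $k=0$ via $\Delta_2(F)^k$ and Corollary \ref{cor:commu}, invoke Proposition \ref{prop:hom} together with Lemma \ref{lem:irr}, and finish (iii), (iv) with Lemmas \ref{lem:lp} and \ref{lem:lt}. Your parameter checks for Lemma \ref{lem:irr}(ii) and for the translation of $a^2,b^2,c^2\notin\{q^{2(d-i)}\}$ into conditions on $\kappa\lambda$, $\lambda\mu$, $\mu\kappa$ are correct. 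The gap is exactly where you flag it, and your proposed Casimir-based repair does not close it. First, before any scalar can be computed you must know that $(B-\theta_1^*)(A-\theta_0)v_0$ is proportional to $v_0$ at all; this is not automatic from $Bv_0=\theta_0^*v_0$. The paper obtains it by applying the cubic relation $B^3A-[3]B^2AB+[3]BAB^2-AB^3=(q^2-q^{-2})^2(AB-BA)$ of Lemma \ref{lem:ABc} to $v_0$, factoring the result as $(B-\theta_{-1}^*)(B-\theta_0^*)(B-\theta_1^*)Av_0=0$, and using that $B$ has simple spectrum $\{\theta_i^*\}_{i=0}^d$ on $M_0(d)$ with $\theta_{-1}^*\notin\{\theta_2^*,\dots,\theta_d^*\}$ to drop the factor $B-\theta_{-1}^*$. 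Your sketch omits this step entirely.

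Second, even granting proportionality, the Casimir identity cannot isolate the required scalar. Write $Av_0=x_0w_1+y_0v_0$ in the $B$-eigenbasis $\{w_h\}$ (so that the sought scalar is $(\theta_0^*-\theta_1^*)(y_0-\theta_0)$) and $Aw_1=x_1w_2+y_1w_1+z_1v_0$. After eliminating $C$, the coefficient of $v_0$ in $\Omega v_0$ is a polynomial in $y_0$ \emph{and} in the product $x_0z_1$ (the latter enters through $A^2$, $C^2$ and $ABC$), and the third relation of Lemma \ref{lem:ABc} projected onto $v_0$ yields $0=0$. So comparing with the scalar from Theorem \ref{thm:Casimir} gives one equation in two independent unknowns and cannot determine $y_0$; trying to use more components only introduces more unknowns and amounts to re-deriving the classification of these modules. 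The paper instead computes $y_0$ directly but cheaply: it evaluates the coefficient of the single basis vector $m^{(\kappa)}_d\otimes m^{(\lambda)}_0\otimes m^{(\mu)}_0$ in $Aw^{(0,d)}_0$ using the homogeneous components of $A^\flat$ from Lemma \ref{lem:ABCflat} (only two terms $c_0(d,0,0)$ and $c_0(d-1,1,0)$ contribute) together with the explicit Clebsch--Gordan coefficients of Proposition \ref{prop:CGVerma}, and observes that $c_h(d,0,0)=0$ for $h\geq 1$ so that this single coefficient already yields $y_0$. To complete your proof you would need to supply both the cubic-relation argument and this (or an equivalent) computation; the Casimir shortcut alone will not do it.
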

\begin{proof}
Since $q$ is not a root of unity and $\kappa$, $\lambda$, $\kappa\lambda$, $\kappa\lambda\mu \not=\pm q^n$ for all $n\in \N$ we may apply Proposition \ref{prop:CGformulaVerma}(i) to see that
\begin{gather*}
u^{(h,d)}_k
=
\sum_{i\in \N}
\sum_{j\in \N}
\begin{Large}
\left[
\begin{smallmatrix}
\kappa\lambda q^{-2h} \; &\mu \; &\kappa\lambda\mu q^{-2d}
\\
i &j &k
\end{smallmatrix}
\right]
\end{Large}
m^{(\kappa,\lambda;h)}_i
\otimes
m^{(\mu)}_j
\qquad
\hbox{for all $d,h,k\in \N$ with $0\leq h\leq d$}
\end{gather*}
are an $\F$-basis of $M(\kappa)\otimes M(\lambda)\otimes M(\mu)$.
Similarly the vectors
\begin{gather*}
w^{(h,d)}_k
=
\sum_{i\in \N}
\sum_{j\in \N}
\begin{Large}
\left[
\begin{smallmatrix}
\kappa \; &\lambda\mu  q^{-2h}\; &\kappa\lambda\mu q^{-2d}
\\
i &j &k
\end{smallmatrix}
\right]
\end{Large}
m^{(\kappa)}_i
\otimes
m^{(\lambda,\mu;h)}_j
\qquad
\hbox{for all $d,h,k\in \N$ with $0\leq h\leq d$}
\end{gather*}
are an $\F$-basis of $M(\kappa)\otimes M(\lambda)\otimes M(\mu)$.
Observe that $\{u^{(h,d)}_k\}_{h=0}^d$ and  $\{w^{(h,d)}_k\}_{h=0}^d$ are contained in $M_k(d)$
for all $d,k\in \N$. Hence
$$
M(\kappa)
\otimes
M(\lambda)
\otimes
M(\mu)
=\sum_{d,k\in \N}
M_k(d).
$$
Since $q$ is not a root of unity and $\kappa\lambda\mu\not=\pm q^n$ for all $n\in \N$ the sum in the right-hand side is a direct sum. Therefore (i) follows. Moreover we have

\begin{lem}\label{lem:basisNkd}
For all $d,k\in \N$ the following {\rm (i)}, {\rm (ii)} hold:
\begin{enumerate}
\item $\{u^{(h,d)}_k\}_{h=0}^d$ are an $\F$-basis of $M_k(d)$.

\item $\{w^{(h,d)}_k\}_{h=0}^d$ are an $\F$-basis of $M_k(d)$.
\end{enumerate}
\end{lem}

Now fix $d,k\in \N$. Recall from Theorem \ref{thm:embedd} the images of $A$, $B$, $\alpha$, $\beta$, $\gamma$ under $\flat$. The $\F$-vector space $M_k(d)$ is invariant under $\alpha$, $\beta$, $\gamma$ by Lemma \ref{lem:CasVerma} and invariant under $A$, $B$ by Lemma \ref{lem:basisNkd}. Therefore $M_k(d)$ is a $\triangle_q$-module by Lemma \ref{lem:ABc}. More precisely, we have the following results:

\begin{lem}\label{lem:ABabcVerma}
For all $d,k\in \N$ the following {\rm (i)--(iii)} hold:
\begin{enumerate}
\item
For $0\leq h\leq d$ the vector $u^{(h,d)}_k$ is an eigenvector of $A$ associated with eigenvalue
\begin{gather}\label{e:thetaV}
\kappa\lambda q^{1-2h}
+
\kappa^{-1}\lambda^{-1} q^{2h-1}.
\end{gather}

\item
For $0\leq h\leq d$ the vector  $w^{(h,d)}_k$ is an eigenvector of $B$ associated with eigenvalue
\begin{gather}\label{e:theta*V}
\lambda\mu q^{1-2h}
+
\lambda^{-1}\mu^{-1} q^{2h-1}.
\end{gather}

\item The elements $\alpha$, $\beta$, $\gamma$ act on $M_k(d)$ as scalar multiplications by
\begin{gather}
(\kappa q+\kappa^{-1} q^{-1})
(\lambda q+\lambda^{-1} q^{-1})
+
(\mu q+\mu^{-1} q^{-1})
(\kappa\lambda\mu q^{1-2d}
+\kappa^{-1}\lambda^{-1}\mu^{-1} q^{2d-1}),
\label{e:wV}
\\
(\lambda q+\lambda^{-1} q^{-1})
(\mu q+\mu^{-1} q^{-1})
+
(\kappa q+\kappa^{-1} q^{-1})
(\kappa\lambda\mu q^{1-2d}
+\kappa^{-1}\lambda^{-1}\mu^{-1} q^{2d-1}),
\label{e:w*V}
\\
(\mu q+\mu^{-1} q^{-1})
(\kappa q+\kappa^{-1} q^{-1})
+
(\lambda q+\lambda^{-1} q^{-1})
(\kappa\lambda\mu q^{1-2d}
+\kappa^{-1}\lambda^{-1}\mu^{-1} q^{2d-1}),
\label{e:weV}
\end{gather}
respectively.
\end{enumerate}
\end{lem}

By Proposition \ref{prop:CGformulaVerma}(ii) we have
\begin{gather*}
\Delta_2(F)^k u_0^{(h,d)}=
[1][2]\cdots [k]
u_k^{(h,d)}
\qquad
(0\leq h\leq d).
\end{gather*}
By Lemma \ref{lem:basisNkd}(i) and since $q$ is not a root of unity the above equation induces an $\F$-linear isomorphism from $M_0(d)$ onto $M_k(d)$. By Corollary \ref{cor:commu} each element of $\triangle_q^\flat$ commutes with $\Delta_2(F)$. Hence the induced $\F$-linear isomorphism $M_0(d)\to M_k(d)$ is a $\triangle_q$-module isomorphism. By the above comment, we may consider the $\triangle_q$-module
$$
V=M_0(d)
$$
instead of $M_k(d)$.
Let $a$, $b$, $c$ be as in the statement (ii). Let $\{\theta_i\}_{i\in \Z}$, $\{\theta_i^*\}_{i\in \Z}$, $\{\varphi_i\}_{i\in \Z}$ and $\omega$, $\omega^*$, $\omega^\e$ denote the corresponding scalars defined in \S\ref{s:AWmodule} with $\nu=q^d$. It is straightforward to check that $\theta_h$, $\theta_h^*$, $\omega$, $\omega^*$, $\omega^\e$ coincide with (\ref{e:thetaV})--(\ref{e:weV}) respectively. By Lemmas \ref{lem:basisNkd}(i) and \ref{lem:ABabcVerma}(i) the element $A$ is diagonalizable on $V$ with eigenvalues $\{\theta_i\}_{i=0}^d$. Hence the characteristic polynomial of $A$ on $V$ is $\prod\limits_{i=0}^d(X-\theta_i)$. 
By Lemma \ref{lem:ABabcVerma}(ii) we have
\begin{gather}\label{e:Bwtheta0}
Bw^{(0,d)}_0=\theta_0^*w^{(0,d)}_0.
\end{gather}
Also, it follows from Lemma \ref{lem:ABabcVerma}(iii) that
\begin{gather*}
\alpha w^{(0,d)}_0=\omega w^{(0,d)}_0,
\qquad
\beta w^{(0,d)}_0=\omega^* w^{(0,d)}_0,
\qquad
\gamma w^{(0,d)}_0=\omega^\e w^{(0,d)}_0.
\end{gather*}
To apply Proposition \ref{prop:hom} it remains to verify that
\begin{gather}\label{e:BAphi1}
(B-\theta_1^*)(A-\theta_0) w^{(0,d)}_0=\varphi_1 w^{(0,d)}_0.
\end{gather}

To check (\ref{e:BAphi1}) we consider the $\F$-basis
\begin{gather}\label{e:canon3foldVerma}
m^{(\kappa)}_i\otimes m^{(\lambda)}_j\otimes m^{(\mu)}_k
\qquad
i,j,k\in \N
\end{gather}
of $M(\kappa)\otimes M(\lambda)\otimes M(\mu)$.
For all $0\leq h\leq d$ and $i,j,k\in \N$ let $c_h(i,j,k)$ denote the coefficient of $m^{(\kappa)}_i\otimes m^{(\lambda)}_j\otimes m^{(\mu)}_k$ in $w^{(h,d)}_0$ with respect to (\ref{e:canon3foldVerma}). By the construction of $w^{(h,d)}_0$ the coefficient 
\begin{gather*}
c_h(i,j,k)=
\sum_{\ell\in \N}
\begin{Large}
\left[
\begin{smallmatrix}
\kappa \; &\lambda\mu q^{-2h} \; &\kappa\lambda\mu q^{-2d}
\\
i &\ell &0
\end{smallmatrix}
\right]
\end{Large}
\begin{Large}
\left[
\begin{smallmatrix}
\lambda \; &\mu \; &\lambda\mu q^{-2h}
\\
j &k &\ell
\end{smallmatrix}
\right]
\end{Large}
\end{gather*}
for all $0\leq h\leq d$ and $i,j,k\in \N$.
Applying the expression of $A^\flat$ given in Lemma \ref{lem:ABCflat} the coefficient of $m^{(\kappa)}_d\otimes m^{(\lambda)}_0\otimes m^{(\mu)}_0$ in $Aw^{(0,d)}_0$ with respect to (\ref{e:canon3foldVerma}) is equal to 
$
a_0\cdot c_0(d,0,0)
+
a_1\cdot c_0(d-1,1,0)
$
where
\begin{align*}
a_0 &=\lambda^{-1}(\kappa q+\kappa^{-1} q^{-1})
+\kappa q^{1-2d}(\lambda-\lambda^{-1}),
\\
a_1 &=\kappa \lambda^{-1} q^{2(1-d)}
(\lambda-\lambda^{-1})
(q^d-q^{-d}).
\end{align*}
Here $c_0(d-1,1,0)$ is interpreted as an indeterminate if $d=0$. By Proposition \ref{prop:CGVerma} the scalars
\begin{align*}
c_0(d,0,0)&=(-1)^d \kappa^d q^{d(1-d)},
\qquad
c_h(d,0,0)=0
\qquad (1\leq h\leq d),
\\
c_0(d-1,1,0)&=(-1)^{d-1}
\kappa^{d-1}\mu^{-1}
q^{-(d-1)(d-2)}
\frac{\kappa q^{1-d}-\kappa^{-1} q^{d-1}}
{\lambda\mu-\lambda^{-1}\mu^{-1}}
\qquad
\hbox{if $d\geq 1$}.
\end{align*}
Comparing the coefficient of $m^{(\kappa)}_d\otimes m^{(\lambda)}_0\otimes m^{(\mu)}_0$ in $w^{(0,d)}_0$ and $(A-\theta_0)w^{(0,d)}_0$yields that the coefficient of $w^{(0,d)}_0$ in $(A-\theta_0)w^{(0,d)}_0$ with respect to the $\F$-basis $\{w^{(h,d)}_0\}_{h=0}^d$ of $V$ is equal to
\begin{gather}\label{e:coeffw0}
a_0-\theta_0
+
a_1
\frac{c_0(d-1,1,0)}{c_0(d,0,0)}.
\end{gather}
On the other hand, we apply $w^{(0,d)}_0$ to either side of the second relation shown in Lemma \ref{lem:ABc}. Simplifying the resulting equation by using (\ref{e:Bwtheta0}) we obtain that
\begin{gather}\label{e:Btrirelation}
(B-\theta_{-1}^*)
(B-\theta_0^*)
(B-\theta_1^*)
Aw^{(0,d)}_0=0.
\end{gather}
Since $q$ is not a root of unity and $\lambda\mu\not=\pm q^n$ for all $n\in \N$, the scalars $\{\theta_i^*\}_{i=0}^d$ are mutually distinct and $\theta_{-1}^*\not=\theta_i^*$ for all $2\leq i\leq d$. Combined with Lemmas \ref{lem:basisNkd}(ii) and \ref{lem:ABabcVerma}(ii) the element $B$ is diagonalizable on $V$ with simple eigenvalues $\{\theta_i^*\}_{i=0}^d$. Therefore $B-\theta_{-1}^*$ can be dropped from (\ref{e:Btrirelation}) even if $\theta_{-1}^*=\theta_0^*$ or $\theta_{-1}^*=\theta_1^*$. It follows from (\ref{e:Bwtheta0}) that 
$$
(B-\theta_1^*)Aw^{(0,d)}_0\in \F w^{(0,d)}_0.
$$
Hence the left-hand side of (\ref{e:BAphi1}) is a scalar multiple of $w^{(0,d)}_0$ by the scalar (\ref{e:coeffw0}) times $\theta_0^*-\theta_1^*$. To see (\ref{e:BAphi1}) it is now routine to check that $\varphi_1$ coincides with the aforementioned scalar.

Thanks to Proposition \ref{prop:hom} there exists a unique $\triangle_q$-module homomorphism $V_d(a,b,c)\to V$ that sends $v_0$ to $w^{(0,d)}_0$. By the assumptions on $\kappa$, $\lambda$, $\mu$, $q$ the scalars $a$, $b$, $c$, $q$ satisfy Lemma \ref{lem:irr}(i), (ii). Therefore the $\triangle_q$-module $V_d(a,b,c)$ is irreducible and the above $\triangle_q$-module homomorphism $V_d(a,b,c)\to V$ is injective.
Since $V$ and $V_d(a,b,c)$ have the same dimension $d+1$ over $\F$ the $\triangle_q$-module $V$ is isomorphic to $V_d(a,b,c)$.
Therefore (ii) holds.

We have seen that $A$ and $B$ are diagonalizable on $V$. 
Therefore (iii) is immediate from Lemma \ref{lem:lp}. Using Lemma \ref{lem:lt} it is routine to verify (iv). The theorem follows.
\end{proof}




\subsection{Coupled bases of tensor products of finite-dimensional irreducible $\U$-modules}\label{s:CG}

Recall from \S\ref{s:Umodule} the finite-dimensional $\U$-modules $V(n,\e)$ and the action of $K$, $F$, $E$  on $V(n,\e)$ with respect to the $\F$-basis $\{v_i^{(n,\e)}\}_{i=0}^n$ of $V(n,\e)$.

Observe that the $\U$-module $V(n,-1)$ is isomorphic to $V(0,-1)\otimes V(n,1)$ and $V(n,1)\otimes V(0,-1)$ for each $n\in \N$. Thus, it is enough to consider the $\U$-modules 
$$
V(n)=V(n,1)
\qquad 
\hbox{for all $n\in \N$}. 
$$
For notational convenience, we write
$v_i^{(n)}=v_i^{(n,1)}$
for all 
$0\leq i\leq n$. 
The proposition below states a similar result to Proposition \ref{prop:CGformulaVerma}. For a proof please refer to \cite[\S VII.7]{kassel}.

\begin{prop}\label{prop:CGformula}
Assume that $q$ is not a root of unity. 
Let $m,n\in \N$. For all integers $h$ with $0\leq h\leq \min\{m,n\}$ let
\begin{eqnarray*}
v^{(m,n;h)}_0
&=&
\sum_{i=0}^h
(-1)^{i}
q^{i(m-i+1)}
{m-i\brack h-i}
{n\brack h-i}^{-1}
 v_{i}^{(m)}\otimes v_{h-i}^{(n)}, \label{e:u0}
 \\
v^{(m,n;h)}_k
 &=&
\frac{1}{[k]} \Delta(F)v^{(m,n;h)}_{k-1}
 \qquad
 (1\leq k\leq m+n-2h). \label{e:uk}
\end{eqnarray*}
Then the following {\rm (i)}, {\rm (ii)} hold:
\begin{enumerate}
\item The vectors
\begin{gather*}\label{e:coupled}
v^{(m,n;h)}_k
\qquad
\hbox{for all
$0\leq h\leq \min\{m,n\}$
and
$0\leq k\leq m+n-2h$}
\end{gather*}
are an $\F$-basis of $V(m)\otimes V(n)$.

\item There exists a unique $\U$-module isomorphism
\begin{eqnarray*}
V(m)\otimes V(n)
&\to&
\bigoplus_{h=0}^{\min\{m,n\}}
V(m+n-2h)
\end{eqnarray*}
that sends $v^{(m,n;h)}_k$ to $v^{(m+n-2h)}_k$ for all
$0\leq h\leq \min\{m,n\}$ and
$0\leq k\leq m+n-2h$.
\end{enumerate}
\end{prop}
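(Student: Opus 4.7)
The plan is to mimic the strategy used in the proof of Proposition \ref{prop:CGformulaVerma}, adapting it to the finite-dimensional setting. First, for each integer $h$ with $0 \leq h \leq \min\{m,n\}$, I would verify by direct calculation, using (\ref{e:DE})--(\ref{e:DK}), that
\begin{gather*}
\Delta(E) v^{(m,n;h)}_0 = 0,
\qquad
\Delta(K) v^{(m,n;h)}_0 = q^{m+n-2h} v^{(m,n;h)}_0.
\end{gather*}
The $K$-eigenvalue statement is immediate from the weight grading on $V(m) \otimes V(n)$. The annihilation by $\Delta(E) = E \otimes 1 + K \otimes E$ requires expanding term by term, collecting the coefficient of $v^{(m)}_{i} \otimes v^{(n)}_{h-1-i}$, and invoking a $q$-binomial telescoping identity of Pascal type for ${m-i\brack h-i}{n\brack h-i}^{-1}$ against the explicit action of $E$ and $K$. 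This is the main technical obstacle.

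Given this, Proposition \ref{prop:UPUqsl2} with $\lambda = q^{m+n-2h}$ supplies a unique $\U$-module homomorphism $\phi_h \colon M(q^{m+n-2h}) \to V(m) \otimes V(n)$ sending $m_0^{(q^{m+n-2h})}$ to $v^{(m,n;h)}_0$. By construction the recurrence $v^{(m,n;h)}_k = \tfrac{1}{[k]}\Delta(F) v^{(m,n;h)}_{k-1}$ matches the action of $F$ on $M(q^{m+n-2h})$, so $\phi_h(m_k^{(q^{m+n-2h})}) = v^{(m,n;h)}_k$ for every $k \in \N$. Since $q$ is not a root of unity, Lemma \ref{lem:irrVn} ensures that $V(m+n-2h)$ is irreducible. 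A short direct verification shows that $\phi_h$ sends the submodule $N(q^{m+n-2h}) \subseteq M(q^{m+n-2h})$ from \S\ref{s:Umodule} to zero, so $\phi_h$ descends to a $\U$-module homomorphism $\bar\phi_h \colon V(m+n-2h) \to V(m) \otimes V(n)$; irreducibility of the source together with $v^{(m,n;h)}_0 \neq 0$ forces $\bar\phi_h$ to be injective.

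Finally, I would wrap up by a dimension count. The submodules $\mathrm{image}(\bar\phi_h)$ sit in pairwise distinct $\Delta(K)$-spectra (indexed by $h$ through the highest weight $q^{m+n-2h}$, which are distinct because $q$ is not a root of unity), so their sum in $V(m) \otimes V(n)$ is direct. Since
\begin{gather*}
\sum_{h=0}^{\min\{m,n\}}(m+n-2h+1) = (m+1)(n+1) = \dim_{\F} \bigl(V(m) \otimes V(n)\bigr),
\end{gather*}
this direct sum fills the whole tensor product. Assembling the pieces, the listed vectors are an $\F$-basis, establishing (i), and the map $v^{(m,n;h)}_k \mapsto v^{(m+n-2h)}_k$ extends to the required $\U$-module isomorphism, establishing (ii). The only nonroutine step is the $q$-binomial identity needed to check $\Delta(E) v^{(m,n;h)}_0 = 0$; everything else is a formal consequence of the universal property and irreducibility results already proved in \S\ref{s:U}.
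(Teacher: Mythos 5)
The paper offers no internal proof of Proposition \ref{prop:CGformula}; it simply cites \cite[\S VII.7]{kassel}, so your proposal is compared below against the standard argument and against the paper's template for the parallel Verma-module statement, Proposition \ref{prop:CGformulaVerma}. Your overall route is the standard one and is essentially sound. The computation you defer does work out: writing $c_i$ for the coefficient of $v_i^{(m)}\otimes v_{h-i}^{(n)}$ in $v^{(m,n;h)}_0$, one finds $c_{j+1}/c_j=-q^{m-2j}[n-h+j+1]/[m-j]$, which is exactly the cancellation needed to make the coefficient of $v_j^{(m)}\otimes v_{h-1-j}^{(n)}$ in $\Delta(E)v^{(m,n;h)}_0$ vanish. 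For the descent of $\phi_h$ to $\bar\phi_h$, the cleanest "short verification" is to note that when $q$ is not a root of unity $N(q^{m+n-2h})$ is the unique nonzero proper submodule of $M(q^{m+n-2h})$, while $\ker\phi_h$ is proper (it misses $m_0$) and nonzero (the image is finite-dimensional). The one step that would fail as written is your justification of directness: the submodules $\bar\phi_h(V(m+n-2h))$ emphatically do \emph{not} have pairwise distinct $\Delta(K)$-spectra --- for $h<h'$ every weight of $V(m+n-2h')$ already occurs in $V(m+n-2h)$ --- so "distinct spectra" proves nothing. Replace this by either of the following: (a) by Lemma \ref{lem:Casimir} the central element $\Delta(\Lambda)$ acts on the $h$-th image by the scalar $q^{m+n-2h+1}+q^{2h-m-n-1}$, and these scalars are pairwise distinct for $0\le h\le\min\{m,n\}$ since $q$ is not a root of unity, so the images lie in distinct eigenspaces of one operator and their sum is automatically direct; or (b) the images are pairwise non-isomorphic irreducible submodules, and a sum of such is direct. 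With that repair, your dimension count $\sum_{h}(m+n-2h+1)=(m+1)(n+1)$ closes (i), and (ii) follows since an isomorphism is determined by its values on a basis.

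For comparison, the paper's own technique in Proposition \ref{prop:CGformulaVerma} works one $\Delta(K)$-weight space at a time: it shows the $h+1$ vectors landing in a fixed weight space are linearly independent by applying powers of $\Delta(E)$ and invoking the nonvanishing of certain brackets, then counts dimensions of each weight space separately. Your global argument (construct all the highest-weight vectors first, map in the irreducibles, separate them by the Casimir, and count once) is a touch cleaner in the finite-dimensional setting precisely because all summands are finite-dimensional and pairwise non-isomorphic; the paper's weight-space-by-weight-space method is the one that survives in the Verma setting, where such a global count is unavailable.
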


The $\F$-basis of $V(m)\otimes V(n)$ given in Proposition \ref{prop:CGformula}(i) is called the {\it coupled $\F$-basis} of $V(m)\otimes V(n)$. 
On the other hand, the $\U$-module $V(m)\otimes V(n)$ has the $\F$-basis
\begin{gather}\label{e:canon2fold}
v_i^{(m)}\otimes v_j^{(n)}
\qquad
\hbox{for all $0\leq i\leq m$ and
$0\leq j\leq n$}.
\end{gather}
For all $h,i,j,k\in \Z$ with $0\leq h\leq \min\{m,n\}$, $0\leq i\leq m$, $0\leq j\leq n$, $0\leq k\leq m+n-2h$ let
\begin{gather*}
\begin{Large}
\left[
\begin{smallmatrix}
m \; &n \; &m+n-2h
\\
i &j &k
\end{smallmatrix}
\right]
\end{Large}
\end{gather*}
denote the coefficient of
$v^{(m)}_i
\otimes
v^{(n)}_j$ in
$v^{(m,n;h)}_{k}$ with respect to the $\F$-basis (\ref{e:canon2fold})
of $V(m)\otimes V(n)$. By a similar argument to Proposition \ref{prop:CGformulaVerma} these coefficients can be expressed as below.

\begin{prop}\label{prop:CG}
Assume that $q$ is not a root of unity. Let $m,n\in \N$. For all $h,i,j,k\in \Z$ with $0\leq h\leq \min\{m,n\}$, $0\leq i\leq m$, $0\leq j\leq n$, $0\leq k\leq m+n-2h$ the following {\rm (i)}, {\rm (ii)} hold:
\begin{enumerate}
\item If $h+k\not=i+j$ then $
\left[
\begin{smallmatrix}
m \; &n \; &m+n-2h
\\
i &j &k
\end{smallmatrix}
\right]
=0$.

\item If $h+k=i+j$ then
$
\left[
\begin{smallmatrix}
m \; &n \; &m+n-2h
\\
i &j &k
\end{smallmatrix}
\right]
$
is equal to
\begin{gather*}
q^{i(h+j-n)}
\sum_{r=0}^{h}
(-1)^{h-r}
q^{(h-r)(m+n-2h-k+1)}
{m-h+r \brack r}
{i\brack h-r}
{j\brack r}
{n\brack r}^{-1}.
\end{gather*}
\end{enumerate}
\end{prop}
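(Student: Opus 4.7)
The plan follows the template of Proposition~\ref{prop:CGVerma}: derive the formula via a two-term recurrence in $k$ coming from $[k]v_k^{(m,n;h)}=\Delta(F)v_{k-1}^{(m,n;h)}$, together with the initial values read off from $v_0^{(m,n;h)}$, and verify that the proposed closed form satisfies both.

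Part~(i) is a weight-space argument. The vector $v_0^{(m,n;h)}$ is a $\Delta(K)$-eigenvector of eigenvalue $q^{m+n-2h}$ by inspection of its formula, and since $\Delta(K)\Delta(F)=q^{-2}\Delta(F)\Delta(K)$, each application of $\Delta(F)$ multiplies the $\Delta(K)$-eigenvalue by $q^{-2}$. Hence $v_k^{(m,n;h)}$ is a $\Delta(K)$-eigenvector with eigenvalue $q^{m+n-2(h+k)}$, whereas $v_i^{(m)}\otimes v_j^{(n)}$ has $\Delta(K)$-eigenvalue $q^{m+n-2(i+j)}$. Comparison yields $h+k=i+j$.

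For part~(ii), I would first read the $k=0$ coefficients off the formula defining $v_0^{(m,n;h)}$: when $j=h-i$, the coefficient equals $(-1)^{i}q^{i(m-i+1)}{m-i\brack h-i}{n\brack h-i}^{-1}$; otherwise it vanishes by~(i). Next, expanding $[k]v_k^{(m,n;h)}=\Delta(F)v_{k-1}^{(m,n;h)}$ using the actions $Fv_i^{(m)}=[i+1]v_{i+1}^{(m)}$ and $Kv_j^{(n)}=q^{n-2j}v_j^{(n)}$, and comparing the coefficients of $v_i^{(m)}\otimes v_j^{(n)}$, yields the recurrence
\begin{gather*}
[k]
\begin{Large}\left[\begin{smallmatrix} m & n & m+n-2h \\ i & j & k \end{smallmatrix}\right]\end{Large}
= q^{2j-n}[i]
\begin{Large}\left[\begin{smallmatrix} m & n & m+n-2h \\ i-1 & j & k-1 \end{smallmatrix}\right]\end{Large}
+ [j]
\begin{Large}\left[\begin{smallmatrix} m & n & m+n-2h \\ i & j-1 & k-1 \end{smallmatrix}\right]\end{Large}.
\end{gather*}
Checking the initial value against the proposed closed form is immediate: at $k=0$ the factors ${i\brack h-r}$ and ${j\brack r}$ with $j=h-i$ jointly force $r=h-i$, and the surviving $q$-exponent $i(2h-i-n)+i(m+n-2h+1)$ collapses to $i(m-i+1)$.

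The principal obstacle is showing that the proposed sum satisfies the recurrence. This amounts to splitting each $q$-binomial indexed by $i$ or $j$ via standard $q$-Pascal identities and matching the resulting pieces against the two summands on the right-hand side; the computation is entirely parallel to the one hidden inside Proposition~\ref{prop:CGVerma}, and indeed the authors signal the proof by analogy rather than writing it out in detail.
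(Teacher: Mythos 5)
Your proposal is correct and takes essentially the same route as the paper, which proves Proposition~\ref{prop:CG} only ``by a similar argument to Proposition~\ref{prop:CGformulaVerma}'', namely the weight-space argument for (i) and, for (ii), the recurrence $[k]\,[\cdots]=q^{2j-n}[i]\,[\cdots]+[j]\,[\cdots]$ (the specialization $\lambda=q^{n}$ of (\ref{e:recCG})) solved against the initial values read off from $v^{(m,n;h)}_0$. Your check that the closed form reduces correctly at $k=0$ (the sum collapsing to $r=h-i$ and the exponent to $i(m-i+1)$) is accurate, and leaving the $q$-Pascal verification of the recurrence as routine is no less detailed than the paper itself.
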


\subsection{Three-fold tensor products of finite-dimensional irreducible $\U$-modules}\label{s:RW1}

In the final subsection we show an analogue of Theorem \ref{thm:RW2} for the finite-dimensional $\U$-modules $V(n)$. Although the proof idea is similar to that of Theorem \ref{thm:RW2}, some places need nontrivial adjustments and hence a complete proof is included below.

\begin{lem}\label{lem:Tmnp}
Let $m$, $n$, $p$ denote any three real numbers. For any real numbers $h,\ell$ the following {\rm (i)--(iii)} are equivalent:
\begin{enumerate}
\item $0\leq h\leq \min\{m,n\}$ and
$0\leq \ell-h \leq \min\{m+n-2h,p\}$.

\item
$
\max\{0,\ell-p\}\leq h\leq \min\{m,n,\ell,m+n-\ell\}.
$

\item
$
\ell-\min\{p,\ell\}\leq h\leq \min\{m,\ell\}+\min\{n,\ell\}-\ell.
$
\end{enumerate}
\end{lem}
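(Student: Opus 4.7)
The plan is to unpack condition (i) into a list of linear inequalities on $h$ and compare that list directly with (ii), and then to derive (iii) from (ii) via two elementary identities in $\min$. The entire argument is formal manipulation of inequalities; there is no deep obstacle, only bookkeeping.

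First I will expand (i). The constraint $0\le h\le\min\{m,n\}$ is $h\ge 0$, $h\le m$, $h\le n$. The constraint $0\le\ell-h$ is $h\le\ell$. The constraint $\ell-h\le\min\{m+n-2h,p\}$ splits into $\ell-h\le m+n-2h$ (equivalently $h\le m+n-\ell$) and $\ell-h\le p$ (equivalently $h\ge\ell-p$). Collecting the lower bounds $h\ge 0$ and $h\ge\ell-p$, and the upper bounds $h\le m,n,\ell,m+n-\ell$, yields exactly (ii). This gives (i) $\Leftrightarrow$ (ii).

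For (ii) $\Leftrightarrow$ (iii) I would verify two identities. The lower-bound identity
\[
\max\{0,\ell-p\}=\ell-\min\{p,\ell\}
\]
is immediate by splitting on the sign of $\ell-p$. The upper-bound identity is
\[
\min\{m,\ell\}+\min\{n,\ell\}-\ell=\min\{m,n,\ell,m+n-\ell\},
\]
which I would check by a four-case analysis on whether each of $\ell\le m$ and $\ell\le n$ holds. In the cases $\ell\le m,n$ both sides equal $\ell$ (using $m+n-\ell\ge\ell$ for the right side); in the mixed cases $\ell\le m<\ell$ or $\ell\le n<\ell$ both sides equal the smaller of $m,n$; and in the case $\ell>m,n$ both sides equal $m+n-\ell$.

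The main (and only) obstacle is keeping the case analysis clean, particularly in verifying the second $\min$-identity; once these two identities are in hand, substituting them into (ii) produces (iii) verbatim, completing the chain (i) $\Leftrightarrow$ (ii) $\Leftrightarrow$ (iii).
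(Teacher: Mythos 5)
Your proposal is correct and follows essentially the same route as the paper: unpack (i) into the six linear inequalities on $h$ to get (ii), then pass to (iii) via the identities $\max\{0,\ell-p\}=\ell-\min\{p,\ell\}$ and $\min\{m,\ell\}+\min\{n,\ell\}-\ell=\min\{m,n,\ell,m+n-\ell\}$. (Only a typo to fix: your ``mixed cases'' should read $n<\ell\leq m$ and $m<\ell\leq n$, not ``$\ell\le m<\ell$''.)
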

\begin{proof}
Rewriting the second inequality in (i) as  
$\ell-p\leq h\leq \min\{\ell,m+n-\ell\}$. 
Combining the inequality with the first inequality in (i) yields the equivalence of (i) and (ii). Since $\max\{0,\ell-p\}=\ell-\min\{p,\ell\}$ and $\min\{m,n,\ell,m+n-\ell\}=\min\{m,\ell\}+\min\{n,\ell\}-\ell$ the inequalities (ii) and (iii) are equivalent.
\end{proof}

\begin{lem}\label{lem:T}
Let $\ell$, $m$, $n$, $p$ denote any integers with $m,n,p\geq 0$. Then there exists an integer $h$ satisfying {\rm Lemma~\ref{lem:Tmnp}(i)--(iii)} if and only if one of the following {\rm (i)--(iii)} holds:
\begin{enumerate}

\item $2\ell\leq \min\{m,\ell\}+\min\{n,\ell\}+\min\{p,\ell\}$.

\item $\max\{m+n+p-\ell,m+\ell,n+\ell,p+\ell,2\ell\}\leq m+n+p$.

\item
$0\leq \ell\leq
\min\left\{m+n,n+p,p+m,
\frac{m+n+p}{2}
\right\}$.
\end{enumerate}
\end{lem}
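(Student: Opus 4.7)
The plan is to reduce everything to a single one-variable existence statement via the cleanest of the three forms of the constraint on $h$, then to unpack the three proposed equivalent conditions individually.

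First I would use the form in Lemma~\ref{lem:Tmnp}(iii): the admissible set for $h$ is the integer interval
\[
\bigl[\,\ell-\min\{p,\ell\},\ \min\{m,\ell\}+\min\{n,\ell\}-\ell\,\bigr].
\]
Since both endpoints are integers (as $\ell$, $m$, $n$, $p$ are), an integer $h$ exists in this interval if and only if the upper bound is at least the lower bound, i.e.
\[
\min\{m,\ell\}+\min\{n,\ell\}+\min\{p,\ell\}\;\geq\;2\ell,
\]
which is precisely condition~(i) of the lemma. So only the equivalence of conditions (i), (ii), (iii) remains to be verified.

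The equivalence (ii)$\iff$(iii) is an unpacking: the inequality $\max\{m+n+p-\ell,\,m+\ell,\,n+\ell,\,p+\ell,\,2\ell\}\leq m+n+p$ amounts to the five individual inequalities $\ell\geq 0$, $\ell\leq n+p$, $\ell\leq m+p$, $\ell\leq m+n$, and $\ell\leq (m+n+p)/2$, which together are exactly condition~(iii).

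For (i)$\iff$(iii) I would rewrite (i) using the identity $\min\{x,\ell\}=\ell-\max\{0,\ell-x\}$, turning it into
\[
\max\{0,\ell-m\}+\max\{0,\ell-n\}+\max\{0,\ell-p\}\;\leq\;\ell.
\]
Since $m,n,p\geq 0$, if $\ell<0$ the left side equals $0$ but the right side is negative, so (i) forces $\ell\geq 0$; and (iii) explicitly requires $\ell\geq 0$. Assuming $\ell\geq 0$, I would do a case analysis on how many of $m$, $n$, $p$ lie strictly below $\ell$. In each case the left side collapses to an explicit linear expression, and the reformulated (i) simplifies to one of the inequalities in (iii): the case with one small value gives a tautology from $m,n,p\geq 0$; with two small values (say $n,p<\ell$) it becomes $\ell\leq n+p$; with all three small it becomes $2\ell\leq m+n+p$. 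Conversely, (iii) supplies exactly these bounds.

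The only slightly delicate point, which I expect to be the main obstacle, is the case $m,n,p<\ell$ of the direction (i)$\Rightarrow$(iii): one must verify that $2\ell\leq m+n+p$ already forces the pairwise bounds such as $\ell\leq m+n$. This uses integrality in a mild way: since $p<\ell$ gives $p\leq \ell-1$, one gets $2\ell\leq m+n+p\leq m+n+\ell-1$, hence $\ell\leq m+n-1<m+n$, and symmetrically for the other pairs. Everything else is bookkeeping.
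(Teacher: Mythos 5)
Your proof is correct, and its skeleton matches the paper's: both reduce the existence of an integer $h$ to condition (i) via the integer-interval form in Lemma~\ref{lem:Tmnp}(iii) (nonempty iff the upper endpoint is at least the lower one), and both treat the equivalence (ii)$\iff$(iii) as a direct unpacking of the five inequalities hidden in the $\max$. Where you diverge is the link between (i) and the other two conditions. The paper's one nontrivial step is the identity
\[
\min\{m,\ell\}+\min\{n,\ell\}+\min\{p,\ell\}-2\ell
=\min\{m,n,p,\ell,m+n-\ell,n+p-\ell,p+m-\ell,m+n+p-2\ell\},
\]
from which (i)$\iff$(ii) follows in one line after discarding the nonnegative entries $m$, $n$, $p$. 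You instead rewrite (i) as $\max\{0,\ell-m\}+\max\{0,\ell-n\}+\max\{0,\ell-p\}\leq\ell$ and run a case analysis on how many of $m,n,p$ fall below $\ell$, matching each case against (iii). Both are sound; the paper's identity is slicker and verification-free once stated, while your case split is more elementary and makes visible exactly which inequality of (iii) is doing the work in each regime. One small remark: in your three-small-values case the integrality of $p$ is not actually needed to get $\ell\leq m+n$ --- the strict inequality $p<\ell$ already gives $2\ell\leq m+n+p<m+n+\ell$, hence $\ell<m+n$ --- so that step is even less delicate than you suggest.
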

\begin{proof}
Since $\ell$, $m$, $n$, $p$ are integers, there exists an integer $h$ satisfying Lemma~\ref{lem:Tmnp}(iii) if and only if (i) holds. Observe that $\min\{m,\ell\}+\min\{n,\ell\}+\min\{p,\ell\}-2\ell$ is equal to
$$
\min\{m,n,p,\ell,m+n-\ell,n+p-\ell,p+m-\ell,m+n+p-2\ell\}.
$$
Since $m$, $n$, $p$ are nonnegative, it follows that (i) holds if and only if
$$
\min\{\ell,m+n-\ell,n+p-\ell,p+m-\ell,m+n+p-2\ell\}\geq 0.
$$
The latter is equivalent to (ii). Therefore (i) and (ii) are equivalent.
Rewriting (ii) as an inequality about $\ell$ we obtain (iii). Therefore (ii) and (iii) are equivalent.
\end{proof}

After all this preparation let us prove the last result of this paper.

\begin{thm}\label{thm:RW}
Assume that $q$ is not a root of unity. Let $m$, $n$, $p$ denote any nonnegative integers. Let $\Sigma$ denote the set consisting of all pairs $(\ell,k)$ of integers satisfying
\begin{gather*}
0\leq
\ell
\leq
\min\left\{m+n,n+p,p+m,
\textstyle{
\frac{m+n+p}{2}
}
\right\},
\qquad
0\leq
k
\leq
m+n+p-2\ell.
\end{gather*}
For all $(\ell,k)\in \Sigma$ let
$$
V_k(\ell)
$$
denote the $\F$-vector subspace of $V(m)\otimes V(n)\otimes V(p)$ spanned by the simultaneous eigenvectors of $\Delta_2(K)$ and $\Delta_2(\Lambda)$ associated with eigenvalues $q^{m+n+p-2(k+\ell)}$ and $q^{m+n+p-2\ell+1}+q^{2\ell-m-n-p-1}$, respectively. 
Then the following {\rm (i)--(iii)} hold:
\begin{enumerate}
\item
$
V(m)\otimes V(n)\otimes V(p)
=
\displaystyle{
\bigoplus_{(\ell,k)\in \Sigma}
V_k(\ell)}$.

\item For all $(\ell,k)\in \Sigma$ the $\F$-vector space $V_k(\ell)$ is a finite-dimensional irreducible $\triangle_q$-module isomorphic to $V_d(a,b,c)$ where
\begin{eqnarray*}
a &=&
q^{\min\{m,\ell\}+\min\{n,\ell\}-\min\{p,\ell\}-m-n-1},
\\
b &=&
q^{\min\{n,\ell\}+\min\{p,\ell\}-\min\{m,\ell\}-n-p-1},
\\
c &=&
q^{\min\{p,\ell\}+\min\{m,\ell\}-\min\{n,\ell\}-p-m-1},
\\
d &=&
\min\{m,\ell\}+\min\{n,\ell\}+\min\{p,\ell\}-2\ell.
\end{eqnarray*}

\item For all $(\ell,k)\in \Sigma$ the action of $A$, $B$, $C$ on $V_k(\ell)$ is as a Leonard triple.
\end{enumerate}
\end{thm}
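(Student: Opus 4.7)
The plan is to follow the proof of Theorem \ref{thm:RW2} closely, using Proposition \ref{prop:CGformula} in place of Proposition \ref{prop:CGformulaVerma}. The two main adjustments are that the coupling indices are now bounded, so the admissible $(\ell,k)$ are governed by Lemmas \ref{lem:Tmnp} and \ref{lem:T}, and that the dimension parameter $d$ of the resulting $\triangle_q$-module varies with $\ell$ through the $\min$ functions.

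Concretely, for each $(\ell,k)\in \Sigma$ and each integer $h$ satisfying Lemma \ref{lem:Tmnp}(i), set
\[
u^{(h,\ell)}_k
=
\sum_{i,j}
\begin{Large}\left[\begin{smallmatrix}m+n-2h & p & m+n+p-2\ell \\ i & j & k\end{smallmatrix}\right]\end{Large}
v^{(m,n;h)}_i\otimes v^{(p)}_j
\]
and
\[
w^{(h,\ell)}_k
=
\sum_{i,j}
\begin{Large}\left[\begin{smallmatrix}m & n+p-2h & m+n+p-2\ell \\ i & j & k\end{smallmatrix}\right]\end{Large}
v^{(m)}_i\otimes v^{(n,p;h)}_j.
\]
By two applications of Proposition \ref{prop:CGformula}(ii) each of these vectors lies in $V_k(\ell)$, and Lemma \ref{lem:Tmnp} ensures that both admissible sets of $h$ have cardinality $d+1$. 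Combining the two iterated Clebsch--Gordan decompositions with Lemma \ref{lem:T}(iii) identifying the support yields the direct sum $V(m)\otimes V(n)\otimes V(p)=\bigoplus_{(\ell,k)\in \Sigma} V_k(\ell)$ together with $\dim V_k(\ell)=d+1$, proving (i).

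For (ii), $V_k(\ell)$ is a $\triangle_q$-submodule by Corollary \ref{cor:commu}, and an appropriate power of $\Delta_2(F)$ induces a $\triangle_q$-module isomorphism $V_0(\ell)\cong V_k(\ell)$, reducing to $k=0$. On $V_0(\ell)$, the bases $\{u^{(h,\ell)}_0\}$ and $\{w^{(h,\ell)}_0\}$ diagonalize $A$ and $B$ with eigenvalues matching the scalars $\theta_h,\theta_h^*$ of \S\ref{s:AWmodule} for the parameters $a,b,c$ stated in the theorem and $\nu=q^d$; similarly $\alpha,\beta,\gamma$ act as $\omega,\omega^*,\omega^\e$. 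A coefficient comparison mirroring the one in the proof of Theorem \ref{thm:RW2}, now using Proposition \ref{prop:CG}, verifies $(B-\theta_1^*)(A-\theta_0)w^{(0,\ell)}_0=\varphi_1 w^{(0,\ell)}_0$. Proposition \ref{prop:hom} then produces a surjection $V_d(a,b,c)\to V_0(\ell)$, which a dimension count upgrades to an isomorphism once Lemma \ref{lem:irr} provides irreducibility; the condition $abc=q^{d+1-2i}$ unwinds to $i=m+n+p-\ell+2$, which always exceeds $d$ because $\min\{x,\ell\}\leq x$, and the three variants $a^{-1}bc,ab^{-1}c,abc^{-1}$ are handled by analogous computations.

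For (iii), Lemma \ref{lem:lt} reduces the Leonard triple claim to $a^2,b^2,c^2\notin\{q^{2(d-i)}:1\leq i\leq 2d-1\}$. A direct computation shows that $a^2=q^{2(d-i)}$ forces $i=m+n+1-2\max\{0,\ell-p\}$, and the hard part is to verify this value of $i$ always exceeds $2d-1$. I expect the main obstacle to be the case analysis here: one splits on whether each of $m,n,p$ is $\leq\ell$ or $>\ell$ and uses the constraints $\ell\leq m+n,\ n+p,\ p+m,(m+n+p)/2$ built into $(\ell,k)\in\Sigma$ to rule out $i\leq 2d-1$ in every case, with the estimates for $b^2,c^2$ following by cyclic permutation. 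Once this case-by-case bookkeeping (and its analogue for the four parameters in (ii)) is cleared, (iii) is immediate from Lemma \ref{lem:lt}.
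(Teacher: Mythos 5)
Your proposal follows the paper's own proof essentially step for step: the iterated Clebsch--Gordan bases $u_k^{(h,\ell)}$, $w_k^{(h,\ell)}$, the range bookkeeping via Lemmas \ref{lem:Tmnp} and \ref{lem:T}, the reduction to $k=0$ by $\Delta_2(F)^k$, the coefficient verification of $(B-\theta_1^*)(A-\theta_0)w^{(s,\ell)}_0=\varphi_1 w^{(s,\ell)}_0$ via Proposition \ref{prop:CG}, and the appeal to Proposition \ref{prop:hom}, Lemma \ref{lem:irr} and Lemma \ref{lem:lt} are exactly the paper's route. The only difference is that you spell out the exponent computations behind Lemma \ref{lem:irr}(ii) and Lemma \ref{lem:lt}(iv) --- which do close cleanly, e.g.\ for the $a^2$ case one finds $i-(2d-1)=|m-\ell|+|n-\ell|+2>0$ --- whereas the paper simply asserts these conditions hold because $q$ is not a root of unity.
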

\begin{proof}
Since $q$ is not a root of unity we may apply Proposition \ref{prop:CGformula}(i) to see that 
\begin{gather*}
u_k^{(h,\ell)}
=
\sum_{i=0}^{m+n-2h}
\sum_{j=0}^p
\begin{Large}
\left[
\begin{smallmatrix}
m+n-2h \; &p \; &m+n+p-2\ell
\\
i &j &k
\end{smallmatrix}
\right]
\end{Large}
v^{(m,n;h)}_i
\otimes
v^{(p)}_j
\end{gather*}
for 
$0\leq h\leq \min\{m,n\}$, $0\leq \ell-h\leq \min\{m+n-2h,p\}$ and $0\leq k\leq m+n+p-2\ell$ form an $\F$-basis of $V(m)\otimes V(n)\otimes V(p)$.
Similarly the vectors
\begin{gather*}
w_k^{(h,\ell)}
=
\sum_{i=0}^{m+n-2h}
\sum_{j=0}^p
\begin{Large}
\left[
\begin{smallmatrix}
m \; &n+p-2h \; &m+n+p-2\ell
\\
i &j &k
\end{smallmatrix}
\right]
\end{Large}
v^{(m)}_i
\otimes
v^{(n,p;h)}_j
\end{gather*}
for 
$0\leq h\leq \min\{n,p\}$, $0\leq \ell-h\leq \min\{n+p-2h,m\}$
and $0\leq k\leq m+n+p-2\ell$ form an $\F$-basis of $V(m)\otimes V(n)\otimes V(p)$.
Rephrasing the ranges of $h$ and $\ell$ via Lemma~\ref{lem:Tmnp} it follows that 
\begin{align*}
u_k^{(h,\ell)}
\qquad
\begin{split}
\hbox{for all $\ell-\min\{p,\ell\}\leq h\leq
\min\{m,\ell\}+\min\{n,\ell\}-\ell$ and $0\leq k\leq m+n+p-2\ell$}
\end{split}
\end{align*}
are an $\F$-basis of $V(m)\otimes V(n)\otimes V(p)$. Similarly the vectors 
\begin{align*}
w_k^{(h,\ell)}
\qquad
\hbox{for all 
$\ell-\min\{m,\ell\}\leq h\leq
\min\{n,\ell\}+\min\{p,\ell\}-\ell$ 
and $0\leq k\leq m+n+p-2\ell$}
\end{align*}
are an $\F$-basis of $V(m)\otimes V(n)\otimes V(p)$. 
Observe that for all $(\ell,k)\in \Sigma$ the $\F$-vector space $V_k(\ell)$ contains 
\begin{alignat*}{2}
u_k^{(h,\ell)}
\qquad
&&\hbox{for all 
$\ell-\min\{p,\ell\}\leq h\leq
\min\{m,\ell\}+\min\{n,\ell\}-\ell$},
\\
w_k^{(h,\ell)}
\qquad
&&\hbox{for all 
$\ell-\min\{m,\ell\}\leq h\leq
\min\{n,\ell\}+\min\{p,\ell\}-\ell$}.
\end{alignat*}
Comparing the ranges of $h$ and $\ell$ via Lemma \ref{lem:T} yields that
\begin{gather*}
V(m)\otimes V(n)\otimes V(p)=\sum_{(\ell,k)\in \Sigma} V_k(\ell).
\end{gather*}
By the construction of $V_k(\ell)$ the sum in the right-hand side is a direct sum. Therefore (i) follows. Moreover we have

\begin{lem}\label{lem:M}
For all $(\ell,k)\in \Sigma$ the following {\rm (i)}, {\rm (ii)} hold:
\begin{enumerate}
\item
$
u_k^{(h,\ell)}$
for all 
$\ell-\min\{p,\ell\}\leq h\leq
\min\{m,\ell\}+\min\{n,\ell\}-\ell$
are an $\F$-basis of $V_k(\ell)$.

\item
$
w_k^{(h,\ell)}
$ for all 
$\ell-\min\{m,\ell\}\leq h\leq
\min\{n,\ell\}+\min\{p,\ell\}-\ell$
are an $\F$-basis of $V_k(\ell)$.
\end{enumerate}
\end{lem}

Now fix a pair $(\ell,k)\in \Sigma$. Recall from Theorem~\ref{thm:embedd} the images of $A$, $B$, $\alpha$, $\beta$, $\gamma$ under $\flat$.
The $\F$-vector space $V_k(\ell)$ is invariant under $\alpha$, $\beta$, $\gamma$ by Lemma \ref{lem:Casimir} and invariant under $A$, $B$ by Lemma \ref{lem:M}.  Therefore $V_k(\ell)$ is a $\triangle_q$-module by Lemma \ref{lem:ABc}. More precisely, we have the following results:

\begin{lem}\label{lem:eigen&abc}
For all $(\ell,k)\in \Sigma$ the following {\rm (i)--(iii)} hold:
\begin{enumerate}
\item For
$\ell-\min\{p,\ell\}\leq h\leq \min\{m,\ell\}+\min\{n,\ell\}-\ell$ the vector $u^{(h,\ell)}_k$ is an eigenvector of $A$ associated with eigenvalue
    \begin{gather}\label{e:theta}
    q^{m+n-2h+1}+q^{2h-m-n-1}.
    \end{gather}

\item For
$\ell-\min\{m,\ell\}\leq h\leq \min\{n,\ell\}+\min\{p,\ell\}-\ell$ the vector $w^{(h,\ell)}_k$ is an eigenvector of $B$ associated with eigenvalue
\begin{gather}\label{e:theta*}
q^{n+p-2h+1}+q^{2h-n-p-1}.
\end{gather}

\item The elements $\alpha$, $\beta$, $\gamma$ act on $V_k(\ell)$ as scalar multiplications by
\begin{gather}
(q^{m+1}+q^{-m-1})(q^{n+1}+q^{-n-1})
+(q^{p+1}+q^{-p-1})
(q^{m+n+p-2\ell+1}+q^{2\ell-m-n-p-1}),
\label{e:w}
\\
(q^{n+1}+q^{-n-1})(q^{p+1}+q^{-p-1})
+(q^{m+1}+q^{-m-1})
(q^{m+n+p-2\ell+1}+q^{2\ell-m-n-p-1}),
\label{e:w*}
\\
(q^{p+1}+q^{-p-1})(q^{m+1}+q^{-m-1})
+(q^{n+1}+q^{-n-1})
(q^{m+n+p-2\ell+1}+q^{2\ell-m-n-p-1}),
\label{e:we}
\end{gather}
respectively.
\end{enumerate}
\end{lem}

By Proposition \ref{prop:CGformula}(ii) we have
\begin{gather*}
\Delta_2(F)^k
u^{(h,\ell)}_0=
[1][2]\cdots[k]
u^{(h,\ell)}_k
\end{gather*}
for all $\ell-\min\{p,\ell\}\leq h\leq \min\{m,\ell\}+\min\{n,\ell\}-\ell$.
By Lemma \ref{lem:M}(i) and since $q$ is not a root of unity the above equation induces an $\F$-linear isomorphism from $V_0(\ell)$ onto $V_k(\ell)$.
By Corollary~\ref{cor:commu} each element of $\triangle_q^\flat$ commutes with $\Delta_2(F)$. Hence the induced $\F$-linear isomorphism $V_0(\ell)\to V_k(\ell)$ is a $\triangle_q$-module isomorphism. By the above comment we may consider the $\triangle_q$-module
$$
V=V_0(\ell)
$$
instead of $V_k(\ell)$.
Let $a$, $b$, $c$, $d$ be as in the statement (ii). 
Let $\{\theta_i\}_{i\in \Z}$, $\{\theta_i^*\}_{i\in \Z}$, $\{\varphi_i\}_{i\in \Z}$ and $\omega$, $\omega^*$, $\omega^\e$ be the corresponding scalars defined in \S\ref{s:AWmodule} with $\nu=q^d$. 
It is straightforward to check that $\theta_{h-\ell+\min\{p,\ell\}}$, $\theta_{h-\ell+\min\{m,\ell\}}^*$, $\omega$, $\omega^*$, $\omega^\e$ coincide with the scalars (\ref{e:theta})--(\ref{e:we}) respectively. By Lemma~\ref{lem:eigen&abc}(i) the element $A$ is diagonalizable on $V$ with eigenvalues $\{\theta_i\}_{i=0}^d$.
Hence the characteristic polynomial of $A$ on $V$ is equal to
$\prod\limits_{i=0}^d(X-\theta_i)$. 
For notational convenience we set
\begin{gather*}
s = \ell-\min\{m,\ell\},
\qquad 
w_h = w^{(h,\ell)}_0
\qquad
(s\leq h\leq s+d).
\end{gather*}
By Lemma~\ref{lem:eigen&abc}(ii) we have
\begin{gather}\label{e:Bwtheta0'}
Bw_s=\theta_0^*w_s.
\end{gather}
Also, it follows from Lemma~\ref{lem:eigen&abc}(iii) that
$$
\alpha w_s= \omega w_s,
\qquad
\beta w_s=\omega^* w_s,
\qquad
\gamma w_s=\omega^\e w_s.
$$
To apply Proposition \ref{prop:hom} it remains to verify that
\begin{gather}\label{e:con2}
(B-\theta_1^*)(A-\theta_0) w_{s}
=\varphi_1 w_{s}.
\end{gather}

To check (\ref{e:con2}) we consider the $\F$-basis
\begin{gather}\label{e:canbasis}
v_i^{(m)}\otimes
v_j^{(n)}\otimes
v_k^{(p)}
\qquad
(0\leq i\leq m,\, 0\leq j\leq n,\, 0\leq k\leq p)
\end{gather}
of $V(m)\otimes V(n)\otimes V(p)$. For all $s\leq h\leq s+d$ and $0\leq i\leq m$, $0\leq j\leq n$, $0\leq k\leq p$ let $c_h(i,j,k)$
denote the coefficient of
$v_i^{(m)}\otimes v_j^{(n)}\otimes v_k^{(p)}$ in $w_h$
with respect to (\ref{e:canbasis}).
By the construction of $w_h$ we have
\begin{gather*}
c_h(i,j,k)
=
\sum_{r=0}^{n+p-2h}
\begin{Large}
\left[
\begin{smallmatrix}
m \; &n+p-2h \; &m+n+p-2\ell
\\
i &r &0
\end{smallmatrix}
\right]
\end{Large}
\begin{Large}
\left[
\begin{smallmatrix}
n \; &p \; &n+p-2h
\\
j &k &r
\end{smallmatrix}
\right]
\end{Large}
\end{gather*}
for all $s\leq h\leq s+d$ and $0\leq i\leq m$, $0\leq j\leq n$, $0\leq k\leq p$. 
Applying the expression of $A^\flat$ given in Lemma~\ref{lem:ABCflat} the coefficient of $v_{\ell-s}^{(m)}\otimes v_0^{(n)}\otimes v_{s}^{(p)}$
in $Aw_{s}$ with respect to (\ref{e:canbasis}) is 
$
a_0
\cdot
c_{s}(\ell-s,0,s)
+
a_1
\cdot
c_{s}(\ell-s-1,1,s)$
where
\begin{eqnarray*}
a_0 &=&
q^{-n}(q^{m+1}+q^{-m-1})
+
q^{m-2\ell+2s+1}
(q^n-q^{-n}),
\\
a_1 &=&
q^{m-n-2\ell+2s+2}
(q^n-q^{-n})
(q^{\ell-s}-q^{s-\ell}).
\end{eqnarray*}
Here $c_{s}(\ell-s-1,1,s)$ is interpreted as an indeterminate if $\ell=s$.
By Proposition \ref{prop:CG} we have
\begin{align*}
c_{s}(\ell-s,0,s)
&=
(-1)^{(\ell-s)}
q^{(\ell-s)(m-\ell+s+1)}
{n \brack s}
{p \brack s}^{-1},
\\
c_{h}(\ell-s,0,s)
&=
0
\qquad
(s+1\leq h\leq s+d),
\end{align*}
and $c_{s}(\ell-s-1,1,s)$ is equal to
\begin{align*}
&(-1)^{(\ell-s)}
q^{(\ell-s-1)(m-\ell+s+2)}
\frac{[m-\ell+s+1]}{[n+p-2s]}
{n\brack s}
{p\brack s}^{-1}
\left(
q^n\frac{[s][p-s+1]}{[n]}
-
q^{2s-p}
\right)
\end{align*}
if $\ell>s$. Comparing the coefficient of $v_{\ell-s}^{(m)}\otimes v_0^{(n)}\otimes v_{s}^{(p)}$ in $w_s$ and $(A-\theta_0)w_s$ yields that the coefficient of $w_s$ in   $(A-\theta_0)w_{s}$ with respect to the $\F$-basis $\{w_h\}_{h=s}^{s+d}$ of $V$ is equal to
\begin{gather}\label{e:phi1}
a_0-\theta_0
+
a_1
\frac{c_{s}(\ell-s-1,1,s)}
{c_{s}(\ell-s,0,s)}.
\end{gather}
On the other hand, we apply $w_s$ to either side of the second relation in Lemma~\ref{lem:ABc}. Simplifying the resulting equation by using (\ref{e:Bwtheta0'}) it follows that
\begin{gather}\label{e:B3}
(B-\theta_{-1}^*)
(B-\theta_0^*)
(B-\theta_1^*)
Aw_{s}=0.
\end{gather}
Since $q$ is not a root of unity the scalars $\{\theta_i^*\}^d_{i=-1}$ are mutually distinct. Combined with Lemmas \ref{lem:M}(ii) and \ref{lem:eigen&abc}(ii) the element $B$ is diagonalizable on $V$ with simple eigenvalues $\{\theta_i^*\}^d_{i=0}$. 
Therefore $B-\theta_{-1}^*$ can be dropped from  (\ref{e:B3}). It follows from (\ref{e:Bwtheta0'}) that 
$$
(B-\theta_1^*)
Aw_{s}\in \F w_{s}.
$$
Hence the left-hand side of (\ref{e:con2}) is a scalar multiple of $w_{s}$ by the scalar (\ref{e:phi1}) times $\theta_0^*-\theta_1^*$. To see (\ref{e:con2}) it is now routine to check that $\varphi_1$ coincides with the aforementioned scalar.

Thanks to Proposition \ref{prop:hom} there exists a unique $\triangle_q$-module homomorphism $V_d(a,b,c)\to V$ that sends $v_0$ to $w_s$. Since $q$ is not a root of unity the scalars $a$, $b$, $c$, $q$ satisfy Lemma \ref{lem:irr}(i), (ii).
Therefore the $\triangle_q$-module $V_d(a,b,c)$ is irreducible and the above $\triangle_q$-module homomorphism $V_d(a,b,c)\to V$ is injective.
Since both of $V_d(a,b,c)$ and $V$ have the same dimension $d+1$ the $\triangle_q$-module $V$ is isomorphic to $V_d(a,b,c)$. Therefore (ii) holds. By the assumption that $q$ is not a root of unity again, the scalars $a$, $b$, $c$ fit  Lemma~\ref{lem:lt}(iv). Therefore (iii) holds. The theorem follows.
\end{proof}

\bibliographystyle{abbrv}
\bibliography{MP}

\end{document}